\newcommand{\Exp}{\mathbf{E}}
\renewcommand{\R}{\mathbb{R}}
\renewcommand{\N}{\mathbb{N}}
\renewcommand{\T}{\mathbb{T}}
\renewcommand{\Z}{\mathbb{Z}}
\renewcommand{\E}{\mathbb{E}}
\renewcommand{\P}{\mathbb{P}}
\renewcommand{\J}{\mathbb{J}}
\newcommand{\cA}{\mathcal{A}}
\newcommand{\cB}{\mathcal{B}}
\newcommand{\cC}{\mathcal{C}}
\newcommand{\cD}{\mathcal{D}}
\newcommand{\cF}{\mathcal{F}}
\newcommand{\cG}{\mathcal{G}}
\newcommand{\cH}{\mathcal{H}}
\newcommand{\cK}{\mathcal{K}}
\newcommand{\cL}{\mathcal{L}}
\newcommand{\cM}{\mathcal{M}}
\newcommand{\cN}{\mathcal{N}}
\newcommand{\cS}{\mathcal{S}}
\newcommand{\cV}{\mathcal{V}}
\newcommand{\cZ}{\mathcal{Z}}
\newcommand{\dd}{\mathrm{d}}      
\newcommand{\Id}{\mathrm{Id}}
\newcommand{\SH}{\mathscr{H}}
\newcommand{\1}{\mathds{1}}
\newcommand{\vphi}{\varphi}
\def\sym{{\mathrm{sym}}}
\newcommand{\Di}{\mathrm{Diag}}
\newcommand{\oD}{\mathrm{off}}
\newcommand{\oDi}{\mathrm{off_1}}
\newcommand{\ooDi}{\mathrm{off_2}}
\newcommand{\bulk}{\mathrm{bulk}}
\colorlet{darkblue}{blue!90!black}
\colorlet{darkred}{red!90!black}
\colorlet{darkgreen}{green!50!black}
\colorlet{darkyellow}{yellow!90!black}
\def\one{\mathrm{(I)}}
\def\two{\mathrm{(II)}}
\newcommand{\half}{\frac{1}{2}}
\def\indN#1{{	\J^N_{#1}	}}
\newcommand{\fock}{\Gamma L^2}	
\def\sint{I}					
\def\wc{\SH}					
\def\swn{{	\eta	}}
\newcommand{\gen}{\cL^N}
\newcommand{\gens}{\cL_0^{N}}
\newcommand{\gensy}{\cL_0}
\newcommand{\gena}{\cA^{N}}
\newcommand{\genap}{\cA^{N}_+}
\newcommand{\genam}{\cA^{N}_-}
\newcommand{\nonlin}{\cK^{N}}
\newcommand{\Op}{\mathcal{H}^N}
\newcommand{\genapc}{A_+^N}
\newcommand{\nf}{\mathfrak{n}^N}
\newcommand{\hf}{\mathfrak{h}}
\def\energy{\CE^N}
\newcommand{\Ll}{\mathrm{L}}
\newcommand{\LB}{\mathrm{LB}}
\newcommand{\UB}{\mathrm{UB}}
\newcommand{\LlN}{\mathrm{L}^N}
\newcommand{\LBN}{\mathrm{LB}^N}
\newcommand{\UBN}{\mathrm{UB}^N}
\title{The stationary AKPZ equation:\\ logarithmic superdiffusivity}
\begin{document}

\maketitle

\vspace{-2cm}

\noindent{\large \bf Giuseppe Cannizzaro$^1$,  Dirk Erhard$^2$, Fabio Toninelli$^3$}
\newline

\noindent{\small $^1$University of Warwick, UK\\%
    $^2$Universidade Federal da Bahia, Brazil\\%
    $^3$Technical University of Vienna, Austria\\}

\noindent\email{giuseppe.cannizzaro@warwick.ac.uk, 
erharddirk@gmail.com, \\
fabio.toninelli@tuwien.ac.at}
\newline

\begin{abstract}
  We study the  two-dimensional Anisotropic KPZ equation
  (AKPZ) formally given by
  \begin{equ}
    \partial_t H=\frac12\Delta H+\lambda((\partial_1 H)^2-(\partial_2 H)^2)+\xi\,,
  \end{equ}
  where $\xi$ is a space-time white noise and $\lambda$ is a strictly positive constant. 
  While the classical
  two-dimensional KPZ equation, whose nonlinearity is
  $|\nabla H|^2=(\partial_1 H)^2+(\partial_2 H)^2$, can be linearised via the Cole-Hopf transformation, 
  this is not the case for AKPZ. We prove
  that the stationary solution to AKPZ (whose invariant measure is the Gaussian
  Free Field) is superdiffusive: its diffusion coefficient diverges
  for large times as $\sqrt{\log t}$ up to $\log\log t$ corrections, in a Tauberian sense. Morally, 
  this says that the correlation length grows with time like $t^{1/2}\times (\log t)^{1/4}$.
  Moreover, we show that if the process is
  rescaled diffusively ($t\to t/\eps^2, x\to x/\eps, \eps\to0$), then it evolves
  non-trivially already on time-scales of order approximately  $1/\sqrt{|\log\eps|}\ll1$.  Both claims hold as
  soon as the coefficient $\lambda$ of the nonlinearity is non-zero.
  These results are in contrast with the belief, common in the 
  mathematics community, that the AKPZ equation is diffusive at large scales and, 
  under simple diffusive scaling, converges the two-dimensional Stochastic Heat
  Equation (2dSHE) with additive noise (i.e. the case $\lambda=0$).
\end{abstract}

\bigskip\noindent
{\it Key words and phrases.}
Anisotropic KPZ equation, super-diffusivity, diffusion coefficient, stochastic growth

\setcounter{tocdepth}{2}       
\tableofcontents

\section{Introduction}
The KPZ equation is a stochastic PDE that formally is written as
\begin{equ}[eq:UBkpzformal]
  \partial_t H= \nu\Delta H+\langle \nabla H,Q\nabla H\rangle + \sqrt{D}\xi,
\end{equ}
where  $H=H(t,x)$ depends on time $t\geq 0$ and $x$, the spatial $d$-dimensional coordinate (e.g. $x\in\R^d$ or $\T^d$), 
$\xi$ is a space-time (white) noise, $\nu,D$ are two positive constants,
and $Q$ is a $d\times d$ matrix.  The KPZ equation was originally derived as a description for 
$(d+1)$-dimensional
stochastic growth: the Laplacian is a smoothing term that overall
flattens the interface, the noise models the microscopic local
randomness, while the non-linear term encodes
the slope-dependence of the growth mechanism. Indeed, at a heuristic
level, the connection between a specific (microscopic) growth model
and the KPZ equation is that $Q$ is proportional to the Hessian $D^2 v$ of the
average speed of growth $v$ of the microscopic model, seen as a function of the average interface slope.

The SPDE \eqref{eq:UBkpzformal} is well known to be analytically ill-posed if $\xi$
is a white noise, due to the non-linear term, so that in order to study the large-scale properties of its solution, 
a standard approach
is to focus on a regularised version of it obtained by smoothing either the noise or the nonlinearity (or both). 
In the spirit of Renormalization Group, one would like to determine
whether the nonlinearity is \emph{relevant} or not, i.e. if it affects
the asymptotic behaviour in a qualitative way, in particular by
changing the growth and roughness exponents with respect to those of
the linear equation obtained by setting $Q\equiv 0$. Note that the
latter is just the $d$-dimensional Stochastic Heat Equation (SHE) with
additive noise. Already in the seminal paper \cite{Kardar} it was
predicted that, if $d\ge 3$ and the nonlinearity is small enough (say
if the norm of $Q$ is small), then the nonlinearity is irrelevant and
the scaling limit is given by the solution of SHE (up to a finite
renormalization of $\nu,D$). A recent series of works
(see~\cite{Magnen,Gu2018b,CCM2,LZ,CNN}) has confirmed this prediction
mathematically (with the important restriction that $Q$ is assumed to
be proportional to the identity matrix: only in this case one can
linearize \eqref{eq:UBkpzformal} via the Cole-Hopf transform, and map
it to a problem of directed polymers in random environment). As for
$d=1$,~\cite{Kardar} conjectures, and it is by now well established
(see~\cite{AKQ14, BQS11, CQ13, MQR18}), that the nonlinearity, no
matter its strength provided it is non-zero, is relevant and changes
the growth exponent from $\beta=1/4$ to $\beta=1/3$.  In dimension
$d=2$, the situation is subtler since finer details of the equation,
and in particular the structure of the matrix $Q$, might affect the
relevance claim.  Indeed, it was predicted
in~\cite{W91,barabasi1995fractal} that if $\det Q>0$ (Isotropic KPZ
equation) then the nonlinearity is relevant and gives rise to
non-trivial and model-independent growth and roughness exponents. In
view of the above-mentioned connection between $Q$ and the Hessian of
$v$, the condition $\det Q>0$ corresponds to growth models with
strictly convex or concave speed of growth.  In the complementary
case, $\det Q\le 0$ (Anisotropic KPZ or AKPZ equation), the
physicists' prediction, based on non-rigorous, one-loop
Renormalization Group computations
(see~\cite{W91,barabasi1995fractal}), states that the equation has the
same scaling limit as the 2dSHE.

A first clear indication that the isotropic and anisotropic versions
of the equation have a radically different behaviour is obtained by
looking at the equation where the nonlinearity parameter $Q$ is
scaled to zero together with $\eps$ (the noise regularisation
parameter).   
In the case of the isotropic KPZ equation with $Q=\lambda\Id$, $\lambda>0$, i.e. nonlinearity
$\lambda |\nabla H|^2$, it was found in \cite{CSZ} that, taking $\lambda=\hat \lambda/\sqrt{|\log\eps|}$, 
$H$ tends as $\eps\to0$ to the
solution of the linear equation with renormalised coefficients if
$\hat\lambda$ is smaller than a precisely identified threshold
$\hat\lambda_c$, and the noise strength in the limiting linear equation
diverges as $\hat \lambda\to\hat \lambda_c$. In contrast, for the
(stationary) AKPZ equation, the findings of~\cite{CES19} imply that there is no phase transition in this scaling.

\medskip

In the present work, we study the regularised AKPZ equation at stationarity with the specific choice
$Q=\lambda\, {\rm diag}\,(+1,-1)$ (in which case the stationary state is given by the Gaussian Free Field~\cite{CES19}), 
and \emph{we do not scale $\lambda$ down to zero}. As remarked in~\cite{da2003nonlinear}, this choice of 
$Q$ is the only one, modulo rotations, for which the stationary state is Gaussian.
Our main results state that in contrast with  the stochastic heat equation the AKPZ equation \emph{is not even asymptotically invariant under diffusive scaling}. In fact, while the former is scale
invariant under diffusive scaling, i.e. time  scaled as $t/\eps^2$
and space as $x/\eps$, we find that as soon as $\lambda>0$, the
stationary and diffusively rescaled process
$H^{\eps}(t,x)\eqdef H(t/\eps^2,x/\eps)$ evolves non-trivially already on
time-scales of order $|\log \eps|^{-1/2}\ll1$, up to  corrections polynomial in $\log|\log\eps|$.
By ``evolves non-trivially'' we mean for instance that, if
$\phi$ is a test function of zero total mass, the normalised covariance at different times of the locally averaged field 
$H^{\eps}[t](\phi)\eqdef\int \phi(x)H^{\eps}(t,x)\dd x$, 
\begin{equ}
  \label{normcov}
  \frac{{\rm Cov}(H^{\eps}[t](\phi),H^{\eps}[0](\phi))}{{\rm Var}(H^{\eps}[0](\phi))}, 
\end{equ}
is strictly smaller than $1$ uniformly in $\eps$, for
$t\approx |\log \eps|^{-1/2}$ (see Theorem \ref{thm:main2} and the
subsequent comments).  Moreover, we show that the diffusion
coefficient $D(t)$, which (once multiplied by $t$) measures the mean
square distance of spreading of correlations as a function of time,
grows in time as $\approx |\log t|^{1/2}$ for $t$ large as soon as
$\lambda>0$ (see Theorem \ref{thm:BD} for the precise formulation),
thus excluding diffusive behaviour since the linear equation instead
is known to diffuse at constant rate $D(t)=1$.  We emphasize that
logarithmic super-diffusivity for the AKPZ equation was \emph{not
  expected} in the mathematical literature \cite{BCF,BCT}, and we are
not aware of predictions in this sense even in the relevant physics
literature \cite{W91,barabasi1995fractal,Healy}.  Based on the
``mode-coupling'' heuristics we give in Appendix \ref{app:aheuri}, it
is reasonable to expect that, once the logarithmic corrections to the
scaling are taken into account, the large-scale behavior of the
equation is Gaussian. A first result in this direction was recently
obtained by the authors in \cite{WAKPZ}: in the case where the
strength $\lambda$ of the nonlinearity is suitably scaled to zero, the
AKPZ equation scales to the stochastic heat equation with renormalized
coefficients.

Finally, it is also interesting to look at more local quantities, such
as the time-dependence of the variance of the height increment at a
single point, $H(t,0)-H(0,0)$.  Since $H$ fails to be a function, we
will study the variance of the height tested against a fixed test
function of compact support.  According to the physicists' predictions
\cite{W91,barabasi1995fractal} and to numerical simulations
\cite{Healy}, this should grow asymptotically like $\log t$, as for
the linear equation. In Theorem~\ref{thm:logt} we prove an upper bound
of this order (implying that the growth exponent $\beta$ is zero); as
we explain in Remark \ref{rem:logt}, this is not in contradiction with
our finding of anomalous diffusivity.

To put our result into a wider context, let us mention that  $\sqrt{\log t}$-behaviour for the diffusion coefficient has been
conjectured also for a whole universality class of two-dimensional
(self)-interacting diffusions, including tracer particles in non-ideal
fluids \cite{Adler}, self-repelling random walks and Brownian polymers \cite{peliti1987random,amit1983asymptotic,obukhov1983renormalisation,Balint}
and the diffusion of a tracer particle in the curl of the
two-dimensional GFF \cite{Balint}. The best rigorous result we are aware of in this
context are super-diffusivity lower and upper bounds of order $\log\log t$ and 
 $\log t$ respectively, obtained in~\cite{LRY} for lattice gas models
and in \cite{Balint} for self-repelling polymers and for the diffusion in
the curl of the GFF. We believe that the tools developed in the present
paper (and in particular Theorem~\ref{thm:Main}) will help to
significantly improve the estimates for these models. 

The crucial ingredient of the proof is a control of the variance of
the time integral of the nonlinearity, that is obtained via an
iterative argument inspired by the works \cite{Landim2004,Yau}, where
the authors study the super-diffusivity of the asymmetric simple
exclusion process (ASEP) in dimensions $d=1,2$.  In particular,
\cite{Yau} proves $(\log t)^{2/3}$ super-diffusion for $2d$-ASEP. Let
us emphasize that, while the iterative method of \cite{Yau} gives a
logarithmic correction to diffusivity at any finite step $k$ of the
iteration, and the limit $k\to \infty$ is needed to pin down the power
of the logarithm to $2/3$, in our case at step $k$ we get only a
$|\log \log t|^k/k!$ correction and we need to take a $k$ diverging
with $t$ to get the $\sqrt{\log t}$ result.  This difference is not a
technical limitation of our estimates but rather it reflects a
different structure of the operators involved in the two problems. The
different symmetry properties of $2d$-ASEP and the AKPZ equation are
also responsible for the different exponents, $2/3$ versus $1/2$, in
the logarithmic super-diffusivity corrections; this was already
pointed out in \cite{LRY,Balint} in the context of lattice gases and
self-repelling polymers.

To conclude this introduction, let us recall that there are several
 microscopic $(2+1)$-dimensional growth models that are known to
belong to the AKPZ universality class, in the sense that their speed
of growth satisfies $\det(D^2 v)\le0$. These include the
Gates-Westcott model \cite{MR1478060,Ler}, certain two-dimensional
arrays of interlaced particle systems \cite{Borodin2014} and the
domino shuffling algorithm \cite{CT} just to mention a few (other
growth processes like the 6-vertex dynamics of
\cite{borodin2017irreversible} and the q-Whittaker particle system
\cite{BC14} should belong to this class, but an explicit computation of $v$ is
not possible since their stationary measures are non-determinantal;
see also \cite{MR3966870} for further references).  Typical results
that have been proven for such models are the scaling of stationary
fluctuations (at fixed time) to a Gaussian Free Field, a logarithmic
upper bound on height fluctuation growth
\cite{Toninelli2017,MR4033679,Ler} (similar to Theorem \ref{thm:logt}
below) and CLTs for height fluctuations on the scale $\sqrt{\log t}$
for certain non-stationary, ``integrable'' initial conditions
\cite{Borodin2014}. However, the more challenging issue of studying the
large-scale diffusivity (or super-diffusivity) properties of these
models is entirely unexplored. While logarithmic super-diffusivity effects
are quite hard to be observed numerically,  the
$(\log t)^{2/3}$ behaviour for two-dimensional asymmetric simple
exclusion has been very recently exhibited in simulations \cite{Krug}.  It would be extremely
interesting to study the super-diffusivity phenomenon we determine for
the continuum AKPZ equation also for discrete growth models in the
same universality class.

\subsection{The AKPZ equation and the main results}

In order to avoid integrability issues arising in the infinite volume regime (that are anyway addressed in~\cite{CK})
we study the solution $H_N$ of the regularised AKPZ equation on a large torus of size $N\in\N$, which is given by
\begin{align}\label{e:kpz:reg}
\partial_t H_{N} = \tfrac{1}{2} \Delta H_N
+
 \lambda \tilde\cN(H_N) + \xi\,,\qquad  H_N(0)=\tilde\eta
\end{align}
where\footnote{The tildas on $\tilde \cN,\tilde \eta$ are there because we will actually work with analogous quantities
that are denoted by the same symbols, without tildas.} 
$H_N=H_N(t,x)$ for $t\geq 0$ and $x\in\T^2_N$, the two-dimensional torus of side length $2\pi N$,  
\begin{itemize}[noitemsep, label=-]
\item $\tilde\eta$ is a Gaussian free field (GFF) on $\T_{N}^2$ with covariance
\begin{equ}
\E[\tilde\eta(\varphi)\tilde\eta(\psi)]=\langle(-\Delta)^{-1}\varphi,\psi\rangle_{L^2(\T_N^2)} \,,\qquad\text{for all $\varphi,\psi\in H^{-1}(\T_N^2)$,}
\end{equ}
 so that in particular, the  $0$ Fourier mode of $\varphi$ and $\psi$ is $0$,
\item $\xi$ is a space-time white noise on $\R_+\times\T_N^2$ independent of $\tilde\eta$ with covariance
\begin{equ}
\E[\xi(\varphi)\xi(\psi)]=\left\langle\varphi,\psi\right\rangle_{L^2(\R_+\times\T_N^2)}\,,\qquad\text{for all $\varphi,\psi\in L^2(\R_+\times\T_N^2)$,}
\end{equ}
\item
  the ``nonlinearity'' $\tilde\cN\eqdef \tilde\cN^1$ is defined as
  \begin{equ}[eq:nonlinH]
    \tilde \cN[H_N] \eqdef \Pi_1 \Big((\Pi_1 \partial_1 H_N)^2 - (\Pi_1 \partial_2 H_N)^2\Big),
  \end{equ}
and, for $M\in\N$, $\Pi_M$ is the operator acting in Fourier space by cutting the modes
larger than $M$, i.e. 
\begin{equ}
  \label{eq:PIN}
\widehat{\Pi_M w}(k)\eqdef \hat w(k) \mathds{1}_{|k| \le M}\,,
\end{equ} 
$\hat w(k)$ is the $k$-th Fourier component of $w$ (see below for our conventions on Fourier transforms) 
and $|k|$ denotes the Euclidean norm of $k$.\footnote{In \cite{CES19}, the r.h.s. of
  \eqref{eq:PIN} was defined with $\mathds 1_{|k|_\infty\le M}$ instead;
  however, all results proven in \cite{CES19} hold true with the
  definition \eqref{eq:PIN}; in this respect, it is important that
  both norms and have the symmetries of $\mathbb Z^2$. In
  this work we prefer to work with the Euclidean norm because it
  slightly simplifies certain technical steps.}
\item $\lambda>0$ is a constant that regulates the strength of the nonlinearity.
\end{itemize}
As was proven in \cite{CES19} (see also Lemma \ref{lem:generator} below), 
the periodic GFF $\tilde \eta$ is a stationary state for the process {\it independently} of $\lambda$ and of the cut-off parameter 
which above is set to be equal to $1$. 
From now on, ${\bf P}={\bf P}^{N}$ and $\Exp=\Exp^{N}$ will respectively denote 
the law and expectation of the stationary space-time process $H_N$, while  
$\P=\P^N$ and $\E=\E^N$ will be used for the law and expectation 
with respect to the stationary measure (the GFF).

The goal of the present paper is to understand the large-scale properties of $H_N$ as a
space-time process in comparison with the linear case
$\lambda=0$, that is simply the stochastic heat equation with additive noise. 

The first observable we consider is the {\it bulk diffusivity} which can be thought of as a measure of 
how the correlations of a process spread in space as a function of
time. 
The definition we will work with is in terms of the following Green-Kubo formula 
\begin{equ}[e:BD]
  D_N(t) = 1 + 2\frac{\lambda^2}{t}\int_0^t\int_0^s
  \int_{\T_N^2}\Exp\Big[\tilde\cN[H_N](r,x)
  \tilde\cN[H_N](0,0)]\Big]\dd x\, \dd r\, \dd s
\end{equ}
which has the advantage of being well-defined since our regularisation
of the nonlinearity ensures that $\tilde\cN[H_N]$ is smooth even if
$H_N$ is not, so that point-wise evaluation is allowed.  The
heuristics connecting the spread of the correlations of $H_N$ to the
formula above is given in Appendix~\ref{sec:heuristics}.  For now, we
simply remark that~\eqref{e:BD} is the analog in the present context
of the definition used in \cite{spohn2012large,Landim2004,Yau} for the
bulk diffusion coefficient of the asymmetric exclusion processes on
$\mathbb Z^d$, or in \cite{BQS11} for the bulk
diffusion coefficient of the one-dimensional KPZ equation.

A crucial feature of the bulk diffusivity is that it provides a way to discern if a process behaves diffusively or not. 
Indeed, while for the linear equation, which is diffusive, $D_N$ is constant in time 
(in case of~\eqref{e:kpz:reg} with $\lambda=0$, clearly $D_N\equiv 1$), 
an indication of superdiffusive behaviour can be obtained by showing that $D_N$ diverges in time 
as $t\to\infty$. For technical reasons, we will work
with the Laplace transform of $D_N$, defined for $\mu>0$ as
\begin{equ}[e:DLT]
  \mathcal D_{N}(\mu)= \mu\int_0^\infty  e^{-\mu t} t\,D_{N}(t)\, \dd t\,.
\end{equ}
The expression above differs from the usual Laplace transform in that we weighted the exponential in such a way 
that $t\mapsto \mu e^{-\mu t}$ is a probability density, which will make some expressions later on more pleasant.
We are ready to state our first result on the bulk diffusivity of $H_N$. 

Set for lightness of notation
\begin{equ}
  \Ll(x,0):=1+\lambda^2\log(1+x^{-1})
\end{equ}
(the second argument of $\Ll$ is there just for coherence with the notation introduced in \eqref{e:L} below)
and note that
\[   \Ll(x,0)\stackrel{x\to0^+}\sim \lambda^2|\log x|.\]
\begin{theorem}\label{thm:BD}
Let $\lambda>0$ and, for $N\in\N$, $D_N$ be defined according to~\eqref{e:BD} and $\cD_N$ be its Laplace transform 
as in~\eqref{e:DLT}. Then, for every $\delta>0$ there exists a constant $c_\bulk<\infty$ such that  for any $\mu>0$ sufficiently small 
\begin{equ}[e:BDUBound]
\limsup_{N\to\infty}\cD_N(\mu)\leq \frac{c_\bulk}\mu\sqrt{\Ll(\mu,0)}\big(\log\Ll(\mu,0)\big)^{5+\delta} 
\end{equ}
and 
\begin{equ}[e:BDLBound]
\liminf_{N\to\infty}\cD_N(\mu)\geq \frac{1}{\mu \,c_\bulk}\sqrt{\Ll(\mu,0)}\big(\log\Ll(\mu,0)\big)^{-5-\delta}  \,.
\end{equ}
\end{theorem}
Let us point out that by translating~\cite[Lemma 1]{QV} into our setting, 
the upper bound~\eqref{e:BDUBound} can be turned into $D_N(t)\lesssim (1+\lambda^2\log(1+t))^{1/2+o(1)}$. 
In general  the same cannot be said for the lower bound but, 
thanks to~\cite[Ch. XIII.5]{feller2008introduction}, $\mu\int_0^\infty e^{-\mu t} t\, f(t)\dd t \sim \frac1\mu(\log
(1/\mu))^{1/2}  $ as $\mu\to0$  implies $\frac1T\int_0^T t\,f(t)\dd t\sim T
(\log T)^{1/2}$ as $T\to\infty$. 
Thus, Theorem \ref{thm:BD} says that, contrary to the linear stochastic heat equation, 
{\it the bulk diffusivity of the Anisotropic KPZ equation grows essentially as the square root of the logarithm of time}, 
at least in a weak Tauberian sense, thus suggesting a superdiffusive behaviour. 
Note that instead for the KPZ equation in $d=1$,~\cite{BQS11} showed that the bulk diffusion coefficient 
grows in time as $t^{1/3}$.

As a side remark, the control of the sub-dominant corrections in Theorem \ref{thm:BD} is sharper than the one obtained in \cite{Yau} for $2d$-ASEP.
\medskip

A natural question to ask when analysing stochastic PDEs of the form~\eqref{e:kpz:reg} is what 
happens when the regularisation is removed and this is closely related
to  
the large scale properties of $H_N$. To understand this point, let us
pretend for a moment that the equation is defined on the whole plane
instead of the torus, and note that rescaling the solution $H\eqdef H_\infty$ of~\eqref{e:kpz:reg} on $\R^2$, 
diffusively, i.e. $H^{\eps}(t,x)\eqdef H(t/\eps^2,x/\eps)$ for $\eps>0$, one obtains the equation
\begin{equ}[e:AKPZ:rescaled]
\partial_t H^{\eps} = \tfrac{1}{2} \Delta H^{\eps}
+
 \lambda \tilde\cN^{1/\eps}(H^{\eps}) + \xi^{\eps}
\end{equ}
 where the nonlinearity is now smoothed via a Fourier cut-off at $\eps^{-1}$ 
and the rescaled noise $\xi^{\eps}$ is equal in distribution to the original $\xi$. 

Since $H^\eps$ (and $H$) are merely distributions (even for $\eps>0$ fixed since the noise is not regularised), 
the random variables to be considered in this context are 
\begin{equ}
H^{\eps}(t)[\phi]\eqdef \int_{\R^2}\phi(x) H^{\eps}(t,x)\dd x= H(t/\eps^2)[\phi^{(\eps)}]\,,\qquad t\geq 0
\end{equ}
for $\phi$ a smooth real-valued test function 
[from now on, for technical simplicity, $\phi$ is assumed to be at least $C^1$ and of compact support], and 
$\phi^{(\eps)}(\cdot)\eqdef \eps^2 \phi(\eps \cdot)$. Again, we want to avoid integrability issues, so we will be actually
looking at the periodic version of the quantity above, namely
\begin{equ}[e:PerHRescaled]
H^\eps_N(t)[\phi]\eqdef H_N(t/\eps^2)[\phi^{(\eps)}]
\end{equ}
in the regime when $N\gg\eps^{-1}$ (morally, we are sending $N\to\infty$ first and then $\eps\to 0$). 
For any fixed time $t$ the distribution of $H_N(t)$ (and $H^\eps_N(t)$) is the same for both $\lambda> 0$ and $\lambda=0$ 
and is given by the GFF $\tilde \eta$; therefore, in order to set apart the behaviour in the two cases, 
we will focus on the covariance between 
$ H^\eps_N(t)[\varphi]$ and $ H^\eps_N(s)[\varphi]$, which depends only on $t-s$ by stationarity, or equivalently on
the variance
\begin{equ}\label{e:Vi}
  V_\varphi^{\eps,N}(t)= \Exp\left[H^\eps_N(t)[\varphi]-H^\eps_N(0)[\varphi]\right]^2\,,\qquad t>0\,,
\end{equ}
whose Laplace transform is
\begin{equ}[eq:Vphi]
  \mathcal V_\varphi^{\eps,N}(\mu)= \mu\int_0^\infty  e^{-\mu t} V_\varphi^{\eps,N}(t)\, \dd t\,, \qquad \mu>0\,.
\end{equ}
To motivate the next result, let us recall that the linear equation ($\lambda=0$) in the whole plane
(i.e. for $N=\infty$) is invariant in law under diffusive scaling,
i.e. $H^{\eps}|_{\lambda=0}\overset{\mathrm{law}}{=}H|_{\lambda=0}$,
as is apparent from \eqref{e:AKPZ:rescaled}.
Equivalently, the random variable
$H(t)[\phi]|_{\lambda=0}-H(0)[\phi]|_{\lambda=0}$ has the same law as
$H(t/\eps^2)[\phi^{(\eps)}]|_{\lambda=0}-H(0)[\phi^{(\eps)}]|_{\lambda=0}$.
In fact, an explicit computation shows that for any $t>0$
\begin{equ}
\lim_{N\to\infty}\left.V^{\eps,N}_{\phi}(t)\right|_{\lambda=0}=\left.V^{ \infty}_\phi(t)\right|_{\lambda=0}\eqdef\frac1{2\pi^2}\int_{\mathbb R^2}\frac{|\hat\phi(k)|^2}{|k|^2}\left(1-e^{-\frac{|k|^2}2 t}\right)\dd k
\end{equ}
and consequently, the Laplace transform satisfies
\begin{equ}
\lim_{N\to\infty}\left.\cV^{\eps,N}_{\phi}(\mu)\right|_{\lambda=0} = \left.\mathcal V_\phi^{\infty}(\mu)\right|_{\lambda=0}= \frac1{4\pi^2}\int_{\mathbb R^2}\frac{|\hat\phi(k)|^2}{\mu+ \tfrac{1}{2}|k|^2}\dd k
\end{equ}
for any $\mu>0$. Note the following:
\begin{itemize}[noitemsep,label=-]
\item if $\int \phi(x)\dd x\ne0$ (so that $\hat\phi(k)$ tends to a
  non-zero constant for $k\to0$) then $t\mapsto \left.V^{\infty}_{\phi}(t)\right|_{\lambda=0}$ is a
  strictly increasing function that starts from $0$ and grows as
  $ \log t$ for $t\to\infty$, or equivalently, its Laplace 
  transform, $\mu\mapsto \left.\mathcal V_\phi^{\infty}(\mu)\right|_{\lambda=0}$, is a strictly positive
  function that tends to zero as $\mu\to\infty$ and to $+\infty$ as
  $\mu\to 0$;
\item if instead $\int \phi(x)\dd x=0$ (so that $\hat\phi(k)=O(k)$ as $k\to0$, due to the smoothness of $\phi$), 
then $t\mapsto \left.V^{\infty}_{\phi}(t)\right|_{\lambda=0}$ is again a strictly increasing function that starts from $0$ 
but this time tends to a positive constant $v_\phi$ as  $t\to\infty$ ($v_\phi$ equals twice the variance of $H[\phi]$). 
For the Laplace transform we then have that $\left.\mathcal V_\phi^{\infty}(\mu)\right|_{\lambda=0}$ is strictly positive, 
uniformly bounded above and tends to zero as $\mu\to\infty$ and to $v_\phi$ for $\mu\to0$.
\end{itemize}

It is now natural to ask if the AKPZ equation   is at least
asymptotically diffusively scale invariant, i.e. if scale invariance holds
asymptotically when first $N\to\infty$ and then $\eps\to0$. Our next
result (see in particular Corollary \ref{cor:t+t-} and the subsequent discussion) corroborates Theorem \ref{thm:BD} and again strongly indicates that
\emph{this is not the case}. More precisely, it suggests that, in order to stand any chance for $H^\eps_N$ to converge 
to some limit, one should rescale time as
$t\mapsto t/(\eps^2|\log\eps|^{1/2})$ (possibly up to corrections polynomial in $\log|\log \varepsilon|$).

\begin{theorem}\label{thm:main2} 
  For $N\in \N$ and $\lambda>0$, let $H_N$ be the solution of~\eqref{e:kpz:reg} started from the invariant measure and let  $\phi:\R^2\mapsto \R$ be compactly supported and $C^\infty$. For every $\delta>0$ there exists $c_\delta>0$ independent of $\phi$ such that the following statements hold for some constants $a_\phi,b>0$:
  \begin{itemize}[label=-]

  \item  defining  $\cV^{\eps,N}_\phi$  according to~\eqref{eq:Vphi},
    \begin{equ}[e:UBVL]
    \limsup_{N\to\infty}\cV_{\phi}^{\eps,N}(\mu)\le \frac {c_\delta}\mu\sqrt{\Ll(\mu\eps^2,0)}(\log\Ll(\mu\eps^2,0))^{5+\delta}\|\phi\|^2_{L^2(\R^2)}\,;
    \end{equ}
  \item      if $\mu=\mu(\eps)\in [a_\phi,  (1/c_\delta)\sqrt{\Ll(\eps,0)}(\log\Ll(\eps,0))^{-5-\delta}]$, then
\begin{equ}[e:UBVLmodo1]
  \liminf_{\eps\to0}    \liminf_{N\to\infty}\cV_{\phi}^{\eps,N}(\mu)\ge b
  \|\phi\|_{-1}^2\eqdef b \int_{\R^2}\frac{|\hat\phi(p)|^2}{|p|^2}\dd p\,
\end{equ}
(the  integral is finite iff $\int_{\R^2}\phi(x)\dd x=0$.)
  \end{itemize}
\end{theorem}
The restriction $\mu(\eps)\ge a_\phi$ in the lower bound is purely technical; at any rate, the interesting regime for our purposes (see the proof of Corollary \ref{cor:t+t-}) corresponds to $\mu(\eps)$ diverging as $\approx\sqrt{|\log\eps|}$.
To appreciate the meaning of Theorem \ref{thm:main2}, note that,
 defining
 \begin{equ}
    t_-(\eps)\eqdef \frac1{ \sqrt{|\log\eps|}}(\log|\log\eps|)^{-5-\delta},\quad
    t_+(\eps)\eqdef \frac 1{\sqrt{|\log\eps|}}(\log|\log\eps|)^{5+\delta},
    \label{e:t+t-}
 \end{equ}
     $\cV^{\eps,N}_\phi(\mu)$  is essentially zero if $\mu\gtrsim 1/t_-(\eps)$ while it is strictly positive (or exploding, if $\int_{\R^2}\phi(x)\dd x\ne0$) if  $\mu\lesssim 1/t_+(\eps)$.
 Using the scaling relation
\begin{equ}
\mu \int_0^\infty e^{-\mu t} V_\phi^{\eps,N}(t/\tau)\dd t 
= \cV_\phi^{\eps,N}(\mu \tau)\,,\quad \tau>0\,,
	\end{equ}
we see that the ``correct'' time scale to observe non-trivial correlations of the process $H^\eps_N$ is $\approx1/(\eps^2\sqrt{|\log\eps|})$.

This observation can be made sharper in the case $\phi$ has zero average. In fact, with little extra work, 
we will deduce from Theorem \ref{thm:main2} the following corollary. 

  \begin{corollary}
    \label{cor:t+t-}
    Let $\phi$ be a compactly supported, $C^\infty$ test function of zero mean, and let $\delta>0$. One has, with $t_\pm(\eps)$ defined as in \eqref{e:t+t-},
    \begin{equ}
      \label{e:cor1}
  \inf_{t\le t_-(\eps)}\liminf_{N\to\infty}\frac{{\rm Cov}(H^{\eps}_N(t)[\phi], H^{\eps}_N(0)[\phi])}{{\rm
      Var}(H^{\eps}_N(0)[\phi])}=1.
\end{equ}
On the other hand, there exists $t=t(\eps)\in (t_-(\eps),t_+(\eps))$ and $a<1$ independent of $\eps,\phi$ such that
\begin{equ}
  \label{e:cor2}
 \limsup_{\eps\to0} \limsup_{N\to\infty}\frac{{\rm Cov}(H^{\eps}_N(t)[\phi], H^{\eps}_N(0)[\phi])}{{\rm
      Var}(H^{\eps}_N(0)[\phi])}\le a.
\end{equ}
  \end{corollary}
 In other words,
$H^{\eps}_N(t)[\phi]$ and $H^{\eps}_N(0)[\phi]$ are almost perfectly correlated for times smaller than $t_-(\eps)$ but,  contrary to what happens in the linear case, they
 decorrelate non-trivially already on a time-scale of order $t_-(\eps)\le t(\eps)\le t_+(\eps)\ll1$.
 To see the relation with  Theorem \ref{thm:main2}, note first that if $\int \phi(x)\dd x=0$ then by stationarity the variance
of $H_N^\eps(0)[\phi]$ is finite
uniformly in both $N$ and $\eps$ (for $N\to\infty$, it tends to $(2\pi)^{-2}\|\phi\|^2_{-1}$).
Note also that
\begin{equ}
  \label{e:nat}\frac{{\rm Cov}(H^{\eps}_N(t)[\phi], H^{\eps}_N(0)[\phi])}{{\rm
      Var}(H^{\eps}_N(0)[\phi])}= 1-\frac{  V^{\eps,N}_{\phi}(t)}{2{\rm
      Var}(H^{\eps}_N(0)[\phi])}.
\end{equ}
\begin{remark}
  The existence of the $N\to\infty$ limits is shown in~\cite{CK} but with a slightly different regularisation (the cut-off chosen is 
  smooth in Fourier space) so we preferred to state the above results with $\liminf$ and $\limsup$. 
  Actually, as will appear from the proof, Theorem \ref{thm:main2} holds in the more general setting where $\eps\to0$ and
  $N\to\infty$ jointly, with $N\eps\to\infty$.
\end{remark}


	
        \medskip

Our last result is a bit different in spirit and our main motivation here is to establish a connection 
with similar statements proven for \emph{discrete} growth models in the AKPZ universality class, as for instance in~\cite{Toninelli2017,MR4033679,Ler}. In the discrete setting, one natural viewpoint is to  look  at
the large-time behaviour of the height at a single point, and in particular at the growth of its variance. 
Since, as remarked above, point evaluation is not possible in the present context, 
we look at the locally averaged field, i.e. we test 
$H_N$ against a \emph{fixed} test function $\phi$ and obtain an upper bound on $V_\phi^{1,N}(t)$ in the
$N\to\infty$ limit, for $t$ arbitrarily large.

\begin{theorem}
  \label{thm:logt}
  For $N\in \N$, let $H_N$ be the solution of~\eqref{e:kpz:reg}, started from the invariant measure.  For any
  compactly supported test function $\varphi$ on $\mathbb R^2$ there exists $c_\phi>0$ such that, for every $t>0$,
  \begin{equ}[eq:logt1]
   \limsup_{N\to\infty}  V_\phi^{1,N}(t)\le c_\phi(1+\lambda^2)\max(\log t, 1).
 \end{equ}
\end{theorem}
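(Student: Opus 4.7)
The plan is to work with the mild (Duhamel) formulation of~\eqref{e:kpz:reg}. Letting $P_r=e^{r\Delta/2}$ denote the two-dimensional heat semigroup on $\T_N^2$ and applying the chain rule to $s\mapsto H_N(s)[P_{t-s}\phi]$ (the time derivative of $P_{t-s}\phi$ cancels the drift $\tfrac12\Delta H_N$ after integration by parts), one obtains
\[
  H_N(t)[\phi] - H_N(0)[\phi] \;=\; H_N(0)\bigl[(P_t-I)\phi\bigr] \;+\; \lambda\!\int_0^t \tilde\cN(H_N(s))[P_{t-s}\phi]\,ds \;+\; M_t^\phi,
\]
where $M_t^\phi = \int_0^t\!\!\int_{\T_N^2}(P_{t-s}\phi)(x)\,\xi(ds,dx)$ is a square-integrable martingale. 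By the triangle inequality in $L^2(\P)$, $V_\phi^{1,N}(t) \le 3(\mathrm{I}+\mathrm{II}+\mathrm{III})$, with $\mathrm I,\mathrm{II},\mathrm{III}$ the variances of the three summands.

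The terms $\mathrm I$ and $\mathrm{III}$ do not depend on $\lambda$ and will be handled by direct Gaussian computations. Using the GFF covariance, in the $N\to\infty$ limit,
\[
\limsup_{N\to\infty}\mathrm{I} \;=\; (2\pi)^{-2}\!\int_{\R^2}\frac{|\hat\phi(k)|^2\bigl(1-e^{-|k|^2 t/2}\bigr)^2}{|k|^2}\,dk,
\]
and splitting the integral at $|k|=t^{-1/2}$, together with $(1-e^{-r})^2\le\min(1,r^2)$, yields $\mathrm{I}\le c_\phi\log t$. For $\mathrm{III}$, the Itô isometry gives $\mathrm{III}=\int_0^t\|P_{t-s}\phi\|^2_{L^2(\T_N^2)}\,ds$, and since $\|P_r\phi\|^2_{L^2(\R^2)}\lesssim\min(\|\phi\|^2_{L^2},|\hat\phi(0)|^2/r)$, integrating in $r$ again produces a $c_\phi\log t$ bound in the $N\to\infty$ limit. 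These two pieces already account for the $c_\phi$ part of~\eqref{eq:logt1}.

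The core of the proof is the estimate $\mathrm{II} = \lambda^2\E\bigl[\bigl(\int_0^t\tilde\cN(H_N(s))[P_{t-s}\phi]\,ds\bigr)^2\bigr] \le c_\phi\lambda^2\log t$. My plan is to apply a Kipnis-Varadhan-type variance inequality for time integrals of $L^2(\P)$-functionals of the stationary process, which reduces the bound to a double Fourier integral of the form
\[
\int_0^t\!\!\int_0^t \bigl\langle\tilde\cN[P_{t-s}\phi],\;e^{-|s-s'|(-\gensy)}\,\tilde\cN[P_{t-s'}\phi]\bigr\rangle_{L^2(\P)}\,ds\,ds',
\]
where $\gensy=\cL_0$ is the symmetric (Ornstein–Uhlenbeck) part of the generator. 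Since $\tilde\cN$ is a second Wiener-chaos element and $\gensy$ acts diagonally there, multiplying a $(k_1,k_2)$-mode by $-(|k_1|^2+|k_2|^2)/2$, the inner product can be made explicit in Fourier. One then combines the heat-kernel decay $\|P_r\phi\|_{L^2}\lesssim r^{-1/2}$ for large $r$ with the angular cancellation built into the $\partial_1^2-\partial_2^2$ structure of $\tilde\cN$ (the same cancellation that makes the GFF invariant, cf.~Lemma~\ref{lem:generator}) to show that the integrand contributes only logarithmically in $t$.

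The hard part will be this estimate of $\mathrm{II}$. A naive Cauchy–Schwarz in the time integral produces a bound of order $t$ rather than $\log t$, so one must genuinely exploit the decorrelation of $\tilde\cN(H_N(s))$ in $s$ together with the decay of the test function $P_{t-s}\phi$. Unlike Theorems~\ref{thm:BD} and~\ref{thm:main2}, where the full iterative scheme of Section~\ref{sec:iteration} is needed to pin down the exponent $\delta$, here only a crude $\log t$ upper bound is required; this should be obtainable from the zeroth-order step of that iteration, i.e.\ a single Dirichlet-form/resolvent bound on the second chaos, with the graded structure of the operators $\genap,\genam$ ensuring the relevant $H^{-1}$-type norm stays uniformly bounded in $N$.
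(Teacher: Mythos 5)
Your treatment of the terms $\mathrm I$ and $\mathrm{III}$ is fine, but the plan for the crucial term $\mathrm{II}$ has a genuine gap. First, a technical point: since the process is not reversible ($\gena\neq 0$), you cannot simply replace the true two-time correlation by $\langle\,\cdot\,,e^{-|s-s'|(-\gensy)}\,\cdot\,\rangle$; the correct tool is the forward--backward martingale (It\^o) trick, which for time-dependent functionals gives $\Exp\bigl[\bigl(\int_0^t F(s,u_s)\,\dd s\bigr)^2\bigr]\lesssim\int_0^t\|F(s,\cdot)\|_{-1}^2\,\dd s$ with the $H^{-1}$ norm taken w.r.t.\ $\gensy$. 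But even granting this, the bound is quantitatively insufficient. The $H^{-1}$ norm of $\eta\mapsto\tilde\cN[\eta][\psi]$ carries, per Fourier mode $k$ of $\psi$, a factor $\sum_{\ell+m=k}(\nonlin_{\ell,m})^2/(|\ell|^2+|m|^2)\sim\log(1/|k|)$ (the angular factor $c(\ell,m)^2/(|\ell|^2|m|^2)$ does \emph{not} average to zero, cf.\ Lemma~\ref{l:sector} and the explicit log in Lemma~\ref{lemmaIto}, so no cancellation saves you). With $\psi=P_{t-s}\phi$ this yields
\begin{equ}
\mathrm{II}\;\lesssim\;\lambda^2\int_{|k|\lesssim 1}|\hat\phi(k)|^2\,\log\tfrac{e}{|k|}\,\frac{1-e^{-|k|^2 t}}{|k|^2}\,\dd k\;\sim\;\lambda^2\,c_\phi\,(\log t)^2
\end{equ}
whenever $\int\phi\neq0$, since the region $t^{-1/2}\le|k|\le1$ contributes $\int_{t^{-1/2}}^1\rho^{-1}\log(e/\rho)\,\dd\rho\sim(\log t)^2$. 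This is not an artifact of sloppy constants: for the Ornstein--Uhlenbeck dynamics the corresponding quantity really is of order $(\log t)^2$, so any estimate that only uses the symmetric part $\gensy$ (your ``zeroth-order step of the iteration'') cannot produce the claimed $\log t$; and even importing the improved resolvent bounds of Section~\ref{sec:iteration} would only replace $\log(1/|k|)$ by $(\log(1/|k|))^{1-\delta}$, still giving $(\log t)^{2-\delta}\gg\log t$. So the hard step you isolate cannot be closed by the tools you propose.

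The paper takes a different route precisely to avoid estimating time decorrelation at all spatial scales simultaneously. One convolves $\phi$ with $g_n(\cdot)=g(\cdot/n)$ at the single scale $n=\lceil\sqrt t\rceil$, writes $H_N(t)[\phi]-H_N(0)[\phi]$ as $n^{-2}$ times $\bigl(H_N(t)[\psi_n]-H_N(0)[\psi_n]-v^{(n)}(t)+v^{(n)}(0)\bigr)$, and applies the \emph{weak} formulation \eqref{eq:ABC} to $\psi_n$: the Laplacian and noise terms, though linear in $t$, become $O(t/n^2)=O(1)$ after the division by $n^4$; the nonlinearity is handled by the It\^o-trick bound of Lemma~\ref{lemmaIto} applied at the single scale $n$, producing exactly one factor $\log n\sim\log t$; and the mollification error $v^{(n)}$ is controlled by a purely static GFF computation, again of order $n^4\log n$. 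This is why the paper needs none of the Section~\ref{sec:iteration} machinery here, whereas your mild-formulation decomposition forces a per-mode log that double-counts and cannot reach \eqref{eq:logt1}.
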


\begin{remark}
  \label{rem:logt}
  It is well known (and it can be checked using the explicit solution) that for the linear equation, one has
\begin{equ}[eq:logt2]
 \lim_{N\to\infty}  \left.V_\phi^{1,N}(t)\right|_{\lambda=0}\stackrel{t\to\infty}\sim c_\phi \log t , \quad \lambda=0.
\end{equ}
While \eqref{eq:logt1} and \eqref{eq:logt2} show the same large-time
behaviour (at least as an upper bound), this \emph{is not in contradiction with
the fact that the relevant scaling for the process with   $\lambda>0$ is different from diffusive} 
as shown in Theorems~\ref{thm:BD} and~ \ref{thm:main2}. Indeed, the $\log t$ behaviour is not a distinguishing feature of the $2$-dimensional stochastic heat equation. 
For instance consider the fractional stochastic heat equation
\begin{equ}
  \partial_t Z=- \tfrac12(-\Delta)^{\theta}Z+(-\Delta)^{\frac{\theta-1}{2}}\xi
\end{equ}
with $Z=Z(t,x),\, t\ge 0, \, x\in \mathbb R^2$, $\theta\in (0,1)$, $\xi$ a space-time white noise as above and 
$(-\Delta)^\theta$ acting in Fourier space as a multiplication by $|k|^{2\theta}$. 
It is easily checked that the GFF on the whole plane is stationary for this equation and $Z$ is scale invariant under 
the superdiffusive scaling $t\to t/\eps^{2\theta},\,x\to x/\eps$. 
Nonetheless, an explicit computation shows that for the stationary process
\begin{equ}[eq:shedtheta]
  \Exp[z(t)[\phi]-z(0)[\phi]]^2= \frac1{2\pi^2}\int_{\mathbb R^2}\frac{|\hat\phi(k)|^2}{|k|^2}\left(
  1-e^{-\frac t2 |k|^{2\theta}}\right)\dd k\stackrel{t\to\infty}\sim c_{\phi,\theta}\log t,
\end{equ}
as is the case for the usual stochastic heat equation where $\theta=1$. Note that, in contrast, in dimension  $d=1$, the large-time behaviour of \eqref{eq:shedtheta} is power-law for large $t$, with a $\theta$-dependent  exponent $1/(2\theta)$.
\end{remark}

\subsection*{Organization of the article}
The rest of this work is organized as follows. In Section
\ref{sec:prelim} we turn the equation \eqref{e:kpz:reg} into a
regularized Burgers equation, we introduce some preliminary formalism
and we recall some basic results from \cite{CES19}. Section
\ref{sec:core} is the core of the work and the main outcome are upper
and lower bounds on the variance of the time integral of the
nonlinearity. Given those bounds, Theorems \ref{thm:BD} and
\ref{thm:main2} are proven in Section
\ref{sec:provathmmain}. The proof of Theorem \ref{thm:logt} is instead
based on different (simpler) tools and it is contained in Section
\ref{sec:logt}. Finally, in the appendix we provide a heuristic for the Green-Kubo formula~\eqref{e:BD}, a heuristic argument explaining our main result of $\sqrt{\log t}$ diffusivity, and collect some technical results.

\subsection*{Notation}

For $N>0$, let $\Z_N\eqdef \Z/N$ and 
$\T_N^2$ be the two-dimensional torus of side length $2\pi N$. If $N=1$ then we simply write $\T^2$ instead of $\T_N^2$.
We denote by $\{e_k\}_{k\in\Z_N^2}$ the Fourier basis defined via 
$e_k(x) \eqdef \frac{1}{2\pi} e^{i k \cdot x}$ which, for all $j,\,k\in\Z_N^2$, satisfies 
$\langle e_k, e_{-j}\rangle_{L^2(\T^2_N)}= \delta_{k,j}  N^2$. 

The Fourier transform of a given function $\phi\in L^2(\T^2_N)$ will be represented as 
$\cF(\phi)$ or by $\hat\phi$ and, for $k\in\Z^2_N$
is given by the formula
	\begin{equation}\label{e:FT}
	\cF(\varphi)(k) =\hat\varphi(k)\eqdef  \int_{\T_N^2} \varphi(x) e_{-k}(x)\dd x\,, 
	\end{equation}
so that in particular
\begin{equ}[e:FourierRep]
\varphi(x) = \frac{1}{N^2}\sum_{k\in\Z_N^2} \hat\varphi(k) e_k(x)\,,\qquad\text{for all $x\in\T_N^2$. }
\end{equ}
For any real valued distribution $\eta\in\cD'(\T_N^2)$ and $k\in\Z_N^2$, 
we will denote its Fourier transform by 
\begin{equation}\label{e:complexPairing}
\hat \eta(k)\eqdef \eta(e_{-k})
\end{equation} 
and note that $\overline{\eta(e_k)}=\eta(e_{-k})$. 
Moreover, we recall that the Laplacian $\Delta$ on $\T_N^2$ has eigenfunctions $\{e_k\}_{k \in \Z_N^2}$ 
with eigenvalues $\{-|k|^2\,:\,k\in\Z_N^2\}$, so that, for $\theta>0$, we can define the operator $(-\Delta)^\theta$
by its action on the basis elements 
	\begin{equation}\label{e:fLapla}
	(-\Delta)^\theta e_k\eqdef |k|^{2\theta}e_k\,,
	\end{equation}	
for $k\in\Z_N^2$. 

Throughout the paper, we will write $a\lesssim b$ if there exists a constant $C>0$ such that $a\leq C b$ and $a\sim b$ if $a\lesssim b$ and $b\lesssim a$. We will adopt the previous notations only in case in which the hidden constants do not depend on any quantity which is relevant for the result.

\section{Preliminaries}
\label{sec:prelim}

The aim of this section is twofold. On the one hand, we will state some basic tools from Wiener space analysis that 
we will need in the rest of the paper while on the other hand
we will reduce the analysis of~\eqref{e:kpz:reg} to the torus of length size $1$, i.e. to the setting of~\cite{CES19}, and 
recall some of the results on the Anisotropic KPZ equation 
obtained therein.  
\medskip

Notice at first that an immediate scaling argument guarantees that for any $N\in\N$, 
\begin{equ}[e:Scaling]
H_N(t,x)\overset{{\rm law}}{=} h^N(t/N^2,x/N)\,,\qquad t\geq 0\text{ and }x\in\T_N^2
\end{equ}
where $h^N$ is the solution of 
\begin{equ}[e:akpz:torus1]
\partial_t h^{N} = \tfrac{1}{2} \Delta h^N
+
 \lambda \tilde\cN^N(h^N) + \xi\,,\qquad  h^N(0)=\tilde\eta
\end{equ}
in which $h^N=h^N(x,t)$ for $t\geq 0$, $x\in\T^2\eqdef\T_1^2$, and all the other quantities are defined as in 
the discussion after~\eqref{e:kpz:reg}. 
Therefore, even though all the statements in the introduction as well as the results we aim for are formulated 
(and ultimately proved) for the solution $H_N$ of~\eqref{e:kpz:reg},~\eqref{e:Scaling} guarantees that 
we can focus instead on $h^N$ since whatever is shown for the latter can then be translated back to $H_N$. 

As in~\cite{CES19}, it turns out to be convenient to work with 
the Stochastic Burgers equation instead of AKPZ, which can be derived from~\eqref{e:akpz:torus1} by setting 
$u^N\eqdef(-\Delta)^{\frac{1}{2}}h^N$ 
so that $u^N$ solves 
\begin{equation}\label{e:AKPZ:u}
\partial_t u^N = \tfrac{1}{2} \Delta u^N
+
\lambda \cM^N[u^N] + (-\Delta)^{\frac{1}{2}}\xi, \quad u^N(0)=\eta\eqdef(-\Delta)^{\frac12}\tilde \eta
\end{equation}
where $u^N=u^N(t,x)$, $t\geq 0$, $x\in\T^2$, and the nonlinearity $\cM^N$ is given by 
\begin{equation}\label{e:nonlin}
\cM^N[u^N]\eqdef (-\Delta)^{\frac{1}{2}}\Pi_N \Big((\Pi_N \partial_1(-\Delta)^{-\frac{1}{2}} u^N)^2 - (\Pi_N \partial_2 (-\Delta)^{-\frac{1}{2}} u^N)^2\Big)\,.
\end{equation}

Note that, since $\tilde \eta$ is a standard Gaussian Free Field, $\eta$ is a (spatial) white noise on $\mathbb T^2$ 
whose basic properties are recalled in the next section (for more on it 
see~\cite[Chapter 1]{Nualart2006}, or~\cite{GPnotes,GPGen} and~\cite[Section 2]{CES19}). 

\subsection{Elements of Wiener space analysis}
\label{S:Malliavin}

Let $(\Omega,\cF,\P)$ be a complete probability space and 
$\eta$ be a mean-zero spatial white noise on the two-dimensional torus $\T^2$, i.e. 
$\eta$, defined in $\Omega$, is a centered isonormal Gaussian process 
(see~\cite[Definition 1.1.1]{Nualart2006}), on $H\eqdef L^2_0(\T^2)$, 
the space of square-integrable functions with $0$ total mass,
whose covariance function is given by 
\begin{equ}\label{eq:spatial:white:noise}
\E[\swn(\vphi)\swn(\psi)]=\langle \vphi, \psi\rangle_{L^2(\T^2)}
\end{equ}
where $\varphi,\psi\in H$ and $\langle\cdot,\cdot\rangle_{L^2(\T^2)}$ is the usual scalar product in $L^2(\T^2)$. 
For $n\in\N$, let $\SH_n$ be the {\it $n$-th homogeneous Wiener chaos}, i.e. the closed linear subspace of 
$L^2(\eta)\eqdef L^2(\Omega)$ generated by the random variables $H_n(\eta(h))$, 
where $H_n$ is the $n$-th Hermite polynomial, and 
$h\in H$ has norm $1$. By~\cite[Theorem 1.1.1]{Nualart2006}, $\wc_n$ and $\wc_m$ are orthogonal whenever 
$m\neq n$ and $L^2(\eta)=\bigoplus_{n}\SH_n$. 
Moreover, there exists a canonical contraction $\sint{}:\bigoplus_{n\ge 0} L^2(\T^{2n}_L) \to L^2(\eta)$, 
which restricts to an isomorphism $\sint{}:\fock \to L^2(\eta)$ on the Fock space $\fock:=\bigoplus_{n \ge 0} \fock_n$, 
where $\fock_n$ denotes the space $L_\sym^2(\T_L^{2n})$ of functions in $L^2(\T_L^{2n})$ which are symmetric
with respect to permutation of variables. The restriction of $\sint$ to $\fock_n$, $\sint_n$, 
called $n$-th (iterated) Wiener-It\^o integral with respect to $\eta$, is itself an isomorphism from $\fock_n$ to $\SH_n$
so that by~\cite[Theorem 1.1.2]{Nualart2006}, for every $F\in L^2(\eta)$ there exists $f=(f_n)_{n\in\N}\in\fock$ 
such that 
\begin{equ}[e:Isometry]
F=\sum_{n\geq 0} I_n(f_n)\qquad\text{and}\qquad \|F\|_\eta^2 = \sum_{n\geq 0} n!\|f_n\|_{L^2(\T^{2n})}^2
\end{equ}
and we take the right hand side as the definition of the scalar product on $\fock$, i.e. 
\begin{equ}[e:ScalarProdFock]
 \langle f,g\rangle_{\fock}\eqdef \sum_{n\geq 0} \langle f_n,g_n\rangle_{\fock_n}\eqdef\sum_{n\geq 0} n!\langle f_n,g_n\rangle_{L^2(\T^{2n})}.
\end{equ}

We conclude this paragraph by mentioning that we will mainly work with the Fourier representation $\{\hat \eta(k)\}_k$ of 
$\eta$, which is a family of complex valued, centered Gaussian random variables such that 
\begin{equs}[e:NoiseFourier]
\hat \eta(0)=0\,,\qquad \overline{\hat \eta(k)}=\hat\eta({-k})\qquad\text{and}\qquad \E[\hat \eta(k)\hat \eta(j)]=\1_{k+j=0}\,.
\end{equs}

\subsection{Stochastic Burgers equation and its Generator}
\label{S:Properties}

The properties of equation~\eqref{e:AKPZ:u} which will be important for us were
obtained in~\cite[Section 3]{CES19}. 
In order to fix the relevant notations, below we recall the Fourier representation of~\eqref{e:AKPZ:u} 
and summarise some of its features referring to~\cite{CES19} for the proofs. 
\medskip 

The Fourier representation of~\eqref{e:AKPZ:u} is equivalent to an infinite system of 
(complex-valued) SDEs given by 
\begin{equation}\label{e:kpz:u}
\dd \hat u^{N}(k) =
\Big(-\frac{1}{2}|k|^2 \hat u^{N}(k)
+
 \lambda\cM^N_k[u^{N}]\Big) \dd t +  |k|\dd B_k(t)\,,\qquad k\in\Z^2\setminus\{0\}\,.
\end{equation}
The $k$-th Fourier component of the nonlinearity is
\begin{align}
\cM_k^N[u^{N}]&\eqdef\cM^N[u^{N}](e_{-k})= |k|\sum_{\ell+m=k}\nonlin_{\ell,m}\hat  u^{N}(\ell) \hat u^{N}(m)\,,\label{e:nonlinF}\\
 \nonlin_{\ell,m} &\eqdef \frac{1}{2\pi} \frac{c(\ell,m)}{|\ell||m|} \indN{\ell,m}\,,\qquad c(\ell,m) \eqdef \ell_2m_2- \ell_1m_1 \label{e:nonlinCoefficient}
\end{align}
where $\ell=(\ell_1,\ell_2),\,m=(m_1,m_2)\in\Z^2$ and 
\begin{equ}[eq:JN]
\indN{\ell,m}\eqdef
\mathds{1}_{0<|\ell|\le N, 0<|m|\le N, |\ell+m|\le N}
\end{equ}
and all the variables in the sum \eqref{e:nonlinF} range over $\Z^2\setminus\{0\}$ (the value $0$ is automatically excluded by the definition of $\indN{}$).

In~\eqref{e:kpz:u}, the $B_k$'s are complex valued Brownian motions defined via  
$B_k(t)\eqdef \int_0^t \hat \xi(s,k)\, ds$, $\hat \xi(k)=\xi(e_{-k})$, so that (recalling that $\xi$ is a space-time white noise)
\begin{equ}
\overline{B_k}= B_{-k}\,,\qquad\text{and}\qquad \dd\langle B_k, B_\ell \rangle_t =  \mathds{1}_{\{k+\ell=0\}}\, \dd t\,.
\end{equ}

Since eventually we are interested in $h^{N}$ rather than in $u^{N}$, note  that $(-\Delta)^{\frac{1}{2}}$ is an invertible 
linear bijection on functions with zero mass, so that we can recover all the non-zero Fourier components of 
$h^{N}$ via
\begin{equ}
  [e:hdak]
\hat h^{N}(k)=\frac{\hat u^{N}(k)}{|k|},\quad k\neq 0
\end{equ}
 On the other hand,  
the zero-mode $\hat h^{N}(0)$ is also a function of $u^{N}$ and of an independent Brownian motion, 
since it satisfies 
\begin{equ}[eq:modozero]
\dd \hat h^{N}(0)= \lambda \cN^N_0+ \dd B_0(t)
\end{equ}
where 
\begin{equ}
  \cN^N_0= \sum_{\ell+m=0}\nonlin_{\ell,m}\hat u^{N}(\ell)\hat u^{N}(m)\,.
\end{equ}

Proposition 3.4 of \cite{CES19} guarantees that, for any $N\in\N$, the process 
$t\mapsto \hat u^{N}(t)=\{\hat u^{N}(t,k)\}_{k\in\Z^2\setminus\{0\}}$
solution to~\eqref{e:kpz:u} is a 
strong Markov process 
and we denote its generator by $\gen$. If the initial condition is
white noise, the law of the process is also translation invariant.
Let $F$ be a cylinder function acting on the space of distributions $\CD'(\T^2)$ and 
depending only on finitely many Fourier components, i.e. $F$ is such that there exists a smooth function 
$f=f((x_k)_{k\in\Z^2\setminus\{0\}})$ with all derivatives growing at most polynomially and depending 
only on finitely many variables, 
for which $F(\eta)=f((\hat \eta(k))_{k\in\Z^2\setminus\{0\}})$. 
Then, $\gen$ can be written as
the sum of $\gensy$ and $\gena$, whose action on $F$ as above is given by
\begin{align}
(\gensy F)(v) &\eqdef \sum_{k \in \Z^2} \half|k|^2 (-\hat v({-k}) D_k  +  D_{-k}D_k )F(v) \label{e:gens}\\
(\gena F)(v) &= \lambda \sum_{m,\ell \in \Z^2\setminus\{0\}} |m+\ell|\nonlin_{m,\ell} \hat v(m) \hat v(\ell) D_{-m-\ell} F(v)\,. \label{e:gena}
\end{align}
Here, for $k\in\Z^2$ and $F$ as above, $D_k F$ is defined as\footnote{For more on the actual definition of cylinder function, Malliavin derivative and the formula below, we address the reader to~\cite[Section 2 and Lemma 2.1]{CES19}}
\begin{equ}
  \label{e:Malliavin}
D_{k} F\eqdef (\partial_{x_k} f)((\hat \eta({k}))_{k\in\Z^2\setminus\{0\}})\,.
\end{equ}

In the following lemma and throughout the remainder of the paper, we will slightly abuse notations and use the same symbol 
to denote an operator acting on (a subspace of) $L^2(\eta)$ and its Fock space version. 
%
%
%
%

\begin{lemma}\cite[Lemmas 3.1 and 3.5]{CES19}\label{lem:generator}
For any $N\in\N$, the spatial white noise $\eta$ on $\T^2$ defined in \eqref{eq:spatial:white:noise} is invariant 
for the solution $\hat u^{N}$ of~\eqref{e:kpz:u} and, with respect to $\eta$, 
the symmetric and anti-symmetric part of $\gen$ are given by 
$\gensy$ and $\gena$, respectively.

Moreover, for all $n \in \N$ the operator $\gensy$ leaves $\wc_n$ invariant, while $\gena$ can be written as the sum 
of two operators $\genap$ and $\genam$ which respectively map $\wc_n$ into $\wc_{n+1}$ and $\wc_{n-1}$ and 
are such that $-\genap$ is the adjoint of $\genam$. 

Finally, on the Fock space $\fock$, we have that $\gensy=-\half\Delta$ and,
in Fourier variables, the action $\gensy$, $\genam$ and $\genap$ on $\phi_n\in\fock_n$ is given by 
\begin{align}
\CF(\gensy \varphi_n) (k_{1:n}) &= \tfrac{1}{2} |k_{1:n}|^2 \hat\phi_n(k_{1:n}) \label{e:gens:fock}\\
\CF(\genap \phi_n)(k_{1:n+1}) 
&= n \lambda 
|k_1+k_2|\nonlin_{k_1, k_2} \hat\phi_n(k_1 + k_2, k_{3:n+1})  \label{e:genap:fock}
\\
\CF(\genam \phi_n)(k_{1:n-1})
&=
2n(n-1) \lambda
\sum_{\ell+m = k_1}|m|
\nonlin_{k_1, -\ell} \hat \phi_n(\ell,m, k_{2:n-1}), \label{e:genam:fock}
\end{align}
where the functions on the right hand side  need to be symmetrised with respect to all permutations of their arguments 
(see e.g.~\eqref{eq:Fourier_asym}). 
In~\eqref{e:gens:fock}-\eqref{e:genam:fock}, all the variables belong to $\Z^2\setminus\{0\}$ and 
we adopted the short-hand notations $k_{1:n}\eqdef (k_1,k_2,\dots,k_n)$ and 
$|k_{1:n}|^2\eqdef |k_1|^2+\dots+|k_n|^2$. 
\end{lemma}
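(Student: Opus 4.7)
The plan is to work throughout in the Wiener--It\^o / Fock space picture, where every statement of the lemma reduces to an explicit computation with creation and annihilation operators on iterated integrals. Two basic identities drive everything. The Malliavin derivative acts as a pure annihilation operator of chaos degree $-1$,
\begin{equ}
D_k I_n(\phi_n) = n\, I_{n-1}(\phi_n(k,\cdot)),
\end{equ}
while multiplication by the Fourier mode $\hat v(k)=I_1(e_{-k})$ decomposes, through the product formula for multiple Wiener integrals, into a creation and an annihilation part,
\begin{equ}
\hat v(k)\, I_n(\phi_n) = I_{n+1}(e_{-k}\,\widetilde{\otimes}\,\phi_n) + n\, I_{n-1}\bigl(\phi_n\otimes_1 e_{-k}\bigr).
\end{equ}

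First I would prove invariance: by an approximation argument it suffices to verify $\Exp^\eta[\gen F]=0$ for $F$ a cylinder function. Substituting the two identities above into \eqref{e:gens}--\eqref{e:gena} shows that $\gens F$ lies entirely in chaoses of positive order, and $\gena F$ for $F\in\wc_n$ lies a priori in $\wc_{n+1}\oplus\wc_{n-1}\oplus\wc_{n-3}$. The only potentially dangerous $\wc_0$ component would arise from $\genam$ applied to $\wc_1$, and on $\fock_1$ the Fock-space expression \eqref{e:genam:fock} vanishes identically because of the combinatorial factor $n(n-1)$. Since all chaoses of positive order integrate to zero against $\eta$, this gives invariance.

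Next I would identify the symmetric and antisymmetric parts. Self-adjointness of $\gens$ in $L^2(\eta)$ is immediate from its Fock-space realization, which acts on each $\fock_n$ as multiplication by the real, positive symbol $\frac12|k_{1:n}|^2$; this simultaneously identifies $\gens$ with $-\frac12\Delta$ on $\fock$ and shows that $\gens$ preserves each $\wc_n$. Antisymmetry of $\gena$ amounts to splitting $\gena=\genap+\genam$ according to whether the image lies in $\wc_{n+1}$ or $\wc_{n-1}$ and then verifying
\begin{equ}
\langle I_{n+1}(f),\genap I_n(\phi_n)\rangle_\eta = -\langle \genam I_{n+1}(f),I_n(\phi_n)\rangle_\eta\,;
\end{equ}
using the Fock-space pairing $\langle I_m(f),I_m(g)\rangle_\eta=m!\langle f,g\rangle_{L^2}$, this reduces to a direct matching of the kernels in \eqref{e:genap:fock}--\eqref{e:genam:fock} after integrating out the contracted momentum $k_1+k_2$ and relabeling indices.

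The last step is the derivation of the explicit kernels \eqref{e:gens:fock}--\eqref{e:genam:fock} by plugging the two basic identities into \eqref{e:gens}--\eqref{e:gena}. For $\gens$, the $\wc_{n-2}$ contributions produced by $\hat v(-k)D_k$ and by $D_{-k}D_k$ are equal and therefore cancel, leaving a pure $\wc_n$ piece which, summed against $\frac12|k|^2$, yields the multiplier $\frac12|k_{1:n}|^2$ on $\hat\phi_n$. For $\gena$, the $\wc_{n+1}$ piece of $\hat v(m)\hat v(\ell)D_{-m-\ell}I_n(\phi_n)$ gives \eqref{e:genap:fock} after the change of variable $k_1=-m-\ell$, and the $\wc_{n-1}$ piece gives \eqref{e:genam:fock}. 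The main obstacle is to verify that the $\wc_{n-3}$ component, present a priori through the double-contraction term in the product formula, vanishes after summing over $(m,\ell)$; this is a non-trivial cancellation specific to the AKPZ nonlinearity amounting to the identity
\begin{equ}
|\alpha|^2 c(\beta,\gamma)+|\beta|^2 c(\gamma,\alpha)+|\gamma|^2 c(\alpha,\beta)=0 \quad\text{whenever } \alpha+\beta+\gamma=0
\end{equ}
for the coefficient $c(m,\ell)=m_2\ell_2-m_1\ell_1$, which one checks by a short direct computation after exploiting the symmetry of $\phi_n$.
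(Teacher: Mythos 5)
This lemma is imported verbatim from \cite[Lemmas 3.1 and 3.5]{CES19} and is not reproved in the paper, so there is no in-text proof to compare against; your reconstruction is correct and follows essentially the argument of that reference: chaos decomposition, the product formula for multiple Wiener--It\^o integrals, the creation/annihilation duality for the kernels, and the cyclic identity $|\alpha|^2c(\beta,\gamma)+|\beta|^2c(\gamma,\alpha)+|\gamma|^2c(\alpha,\beta)=0$ for $\alpha+\beta+\gamma=0$, which I checked and which is indeed the algebraic heart of both the invariance of $\eta$ and the fact that $\gena$ has no component lowering the chaos by $3$. One ordering issue worth fixing: in the invariance step you assert that the only possible $\wc_0$ contribution to $\gena F$ comes from $\genam$ acting on $\wc_1$, but a priori the double-contraction ($\wc_{n-3}$) term acting on $\wc_3$ also lands in $\wc_0$; since you establish only afterwards that this term vanishes, that cancellation should be proved first (or invoked explicitly) for the invariance argument to close. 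Note also that the symmetrization step leading to the cyclic identity uses that the cut-off indicator $\indN{\ell,m}$ is invariant under permutations of the three momenta summing to zero, which holds for the Euclidean-ball cut-off \eqref{eq:JN} (this is the point of the paper's footnote on the choice of norm).
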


For later purposes, let us introduce the following definition:
\begin{definition}\label{def:diagonal}
 An operator $\cZ$ on $\fock$ is said to be diagonal if for every $n$ it maps $\fock_n$ into itself, and it acts in Fourier space as a multiplier, that is there exists a sequence of symmetric functions $\zeta=(\zeta_n)_{n\ge1}$ such that for all $n$ and $\phi\in \fock_n$, one has $\cF(\cZ \phi)(k_{1:n})=\zeta_n(k_{1:n})\hat\phi(k_{1:n})$.
\end{definition}
In this sense, the operator $\gensy$ is diagonal while $\genap,\genam$ are clearly not.

\section{The variance of the nonlinearity}
\label{sec:core}
The present section represents the bulk of the paper and focuses on the term in~\eqref{e:kpz:reg} that 
distinguishes SHE and AKPZ, i.e. the nonlinearity. In particular, we aim at estimating  
from above and below the Laplace transform of the second moment of
the time integral of $\tilde\cN^N(h^{N})$ tested against a suitable test function, 
see Proposition~\ref{p:mainB} for the result we are after.
We will then see later in Section~\ref{sec:provathmmain} that the nonlinearity gives the dominant contribution to the bulk diffusivity $D_N$ (Lemma~\ref{l:BD}) and to $h^N(t)[\phi]-h^N(0)[\phi]$ (Proposition~\ref{p:mainAC}).
\medskip

Let $\phi$ be a sufficiently regular test function and, for $t\geq 0$, denote the time integral of the 
nonlinearity against $\phi$ by  
\begin{equ}[e:BN]
B^N_\phi(t)\eqdef\int_0^t\lambda \tilde{  \mathcal N}^N(h^N(s))[\phi]\dd s=\int_0^t \lambda\cN^N(u^N(s))[\varphi]\dd s
\end{equ}
where the second equality is an immediate consequence of~\eqref{e:nonlinCoefficient} and \eqref{e:hdak} 
once we set 
\begin{equ}[eq:ches]
\cN^N(u^N)[\varphi]\eqdef\sum_{\ell,m\in\mathbb Z^2} \nonlin_{\ell,m}\hat u^N(\ell)\hat u^N(m) \hat \varphi(-\ell-m)\,.
\end{equ} 
%

In the stationary process, the random variable $B_\phi^N$ is centered.  This follows from \eqref{eq:ches} and from the
anti-symmetry of $\nonlin_{\ell,m}$ under $ \ell=(\ell_1,\ell_2)\mapsto(\ell_2,\ell_1),  m=(m_1,m_2)\mapsto(m_2,m_1)$. Its variance then coincides with its second moment.

By~\eqref{e:BN},~\eqref{e:Isometry} and~\cite[Lemma 5.1]{CES19}, 
for any $\mu>0$, the Laplace transform of the second moment of $B^N_\phi$ satisfies
\begin{equ}[e:Laplace]
\cB_\phi^N(\mu)\eqdef	\mu\int_0^\infty \,
	e^{-\mu t}\Exp\Big[ \big(B^N_\phi(t)\big)^2 \Big] \dd t
	= \frac{2}{\mu} \langle \nf_\phi,\,(\mu- \gen)^{-1}\nf_\phi\rangle_{\fock}
\end{equ} 
where $\gen$ is the generator of $u^{N}$ and $\nf_\phi$ is the representation in Fock space of 
$\lambda \cN^N[\eta](\phi)$, i.e.
\begin{equ}[eq:explFock]
\lambda\cN^N[\eta](\phi)=\sint_2(\nf_\phi)\qquad\text{with}\qquad \hat{\mathfrak n}^N_\phi(\ell,m)=\lambda\nonlin_{\ell,m}\hat \varphi(-\ell-m),\quad\ell,m\in\Z^2
\end{equ}
as can be read off~\eqref{eq:ches}. 
\medskip

Now, in order to control $\cB_\phi^N$ we need to improve our
understanding of the scalar product at the right hand side
of~\eqref{e:Laplace}, which in particular means that we need to invert $\mu-\gen$, 
which is though not feasible in view of the singularity induced by the antisymmetric part of operator $\gen$, i.e. $\gena$.  
To overcome this difficulty we will exploit a technique first
established in~\cite{Landim2004} and explored in full strength
in~\cite{Yau}, where the authors studied the superdiffusivity of the
asymmetric simple exclusion process in dimension $d=1,2$, 
and which essentially consists in truncating the resolvent equation. 
To be more precise for $n\in\N$, let $I_{\leq n}$ be the
projection onto $\fock_{\leq n}\eqdef\bigoplus_{k=0}^n \fock_k$ and
$\gen_n = \sint_{\leq n}\gen \sint_{\leq n}$. Then, let
$\hf^{N,n}\eqdef (\hf_j^{N,n})_{j\leq n}\in \fock_{\leq n}$ be the
solution of the {\it truncated generator equation} 
\begin{equ}[e:GenEqun]
(\mu-\gen_n)\hf^{N,n}=\nf_\phi\,
\end{equ}
(which will be given explicitly below), and further write $\hf^N=(\mu-\gen)^{-1}\nf_\phi$. 
The property of $\hf^{N,n}$ that allows one to reduce the analysis to that of the truncated resolvent equation is stated in the following lemma, 
derived in~\cite[Lemma 2.1]{Landim2004}. 

\begin{lemma}\label{l:Sandwich}
Let $\mu>0$. Then, for every $n\in\N$, we have that 
\begin{equs}
\langle \nf_\phi,\hf^{N,2n+1}\rangle_{\fock}
\leq \langle \nf_\phi,(\mu-\gen)^{-1}\nf_\phi\rangle_{\fock}
\leq\langle \nf_\phi,\hf^{N,2n}\rangle_{\fock}\,.
\end{equs}
Moreover, the sequence $\{\langle \nf_\phi,\hf^{N,2n+1}\rangle_{\fock}\}_n$ is increasing while 
$\{\langle \nf_\phi,\hf^{N,2n}\rangle_{\fock}\}_n$ is decreasing and they both converge to 
$\langle \nf_\phi,\hf^{N}\rangle_{\fock}$ as $n\to\infty$. 
\end{lemma}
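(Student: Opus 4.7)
The plan is to represent the $(2,2)$-block of $(\mu-\gen_n)^{-1}$ on $\fock_2$ (where $\nf_\phi$ lives) as the inverse of an effective operator $S_2^{(n)}$ obtained via iterated Schur complements in the chaos decomposition, and then to establish the sandwich through an alternating monotonicity of $S_2^{(n)}$ in $n$. This strategy goes back to~\cite{Landim2004} and exploits the block-tridiagonal structure of $\gen$ in the chaos decomposition $\fock=\bigoplus_{k\ge 0}\fock_k$ guaranteed by Lemma~\ref{lem:generator}: denoting by $T_k$ the diagonal block of $\mu-\gen$ on $\fock_k$, which by~\eqref{e:gens:fock} is a positive self-adjoint operator bounded below by $\mu I$, the sub- and super-diagonal blocks of $\mu-\gen$ are $-\genap,-\genam$, related by $\genam^*=-\genap$. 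Since $\genam$ vanishes on $\fock_1$ (see~\eqref{e:genam:fock} at $n=1$), chaos $1$ is coupled only to chaos $2$, while chaos $0$ is entirely decoupled.

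Iterating the Schur complement first from chaos $n$ down to chaos $3$ and then eliminating chaos $1$ separately, one obtains
\begin{equ}
\langle\nf_\phi,\hf^{N,n}\rangle_\fock=\langle\nf_\phi,(S_2^{(n)})^{-1}\nf_\phi\rangle_\fock,\qquad S_2^{(n)}:=T_2+\genap^*(K_3^{(n)})^{-1}\genap+\genam^*T_1^{-1}\genam,
\end{equ}
where the operators $K_k^{(n)}$ on $\fock_k$ are given by the backward recursion $K_n^{(n)}:=T_n$ and $K_k^{(n)}:=T_k+\genap^*(K_{k+1}^{(n)})^{-1}\genap$ for $3\le k<n$. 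By construction all $K_k^{(n)}$ are positive self-adjoint operators bounded below by $\mu I$, so every inverse is well defined. The identity
\begin{equ}
K_k^{(n+1)}-K_k^{(n)}=\genap^*\bigl[(K_{k+1}^{(n+1)})^{-1}-(K_{k+1}^{(n)})^{-1}\bigr]\genap,\qquad 3\le k<n,
\end{equ}
combined with the fact that operator inversion reverses while conjugation preserves the positive-semidefinite order, gives $\mathrm{sign}(K_k^{(n+1)}-K_k^{(n)})=-\mathrm{sign}(K_{k+1}^{(n+1)}-K_{k+1}^{(n)})$. The base case $K_n^{(n+1)}-K_n^{(n)}=\genap^*T_{n+1}^{-1}\genap\ge 0$ then propagates by downward induction to the alternating chain
\begin{equ}
K_3^{(3)}\le K_3^{(5)}\le\cdots\le K_3^{(\infty)}\le\cdots\le K_3^{(6)}\le K_3^{(4)}.
\end{equ}

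Since two inversions and one conjugation preserve the net direction of the order, this chain carries over verbatim to $(S_2^{(n)})^{-1}$; pairing against $\nf_\phi$ then yields both the sandwich $\langle\nf_\phi,\hf^{N,2n+1}\rangle_\fock\le\langle\nf_\phi,\hf^N\rangle_\fock\le\langle\nf_\phi,\hf^{N,2n}\rangle_\fock$ and the monotonicity of each subsequence. Their convergence to $\langle\nf_\phi,\hf^N\rangle_\fock$ follows from the uniform a priori bound $\mu\|\hf^{N,n}\|_\fock\le\|\nf_\phi\|_\fock$, obtained by testing $(\mu-\gen_n)\hf^{N,n}=\nf_\phi$ against $\hf^{N,n}$, which lets one extract weak subsequential limits that must coincide with $\hf^N$ by uniqueness in the resolvent equation projected chaos-by-chaos. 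The main technical point will be justifying the Schur complement manipulations given that $\genap,\genam$ are unbounded on $\fock$; this is cleanest to handle by introducing an additional Fourier-mode truncation compatible with~\eqref{eq:JN} that renders each $\fock_k$ finite-dimensional, proving the sandwich in that setting, and removing the truncation using the uniform lower bound $K_k^{(n)}\ge\mu I$ together with monotone convergence.
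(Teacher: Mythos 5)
The paper does not actually prove this lemma: it states it and cites~\cite[Lemma 2.1]{Landim2004}, and the explicit Schur-complement reduction only appears later (in \eqref{def:OpH}--\eqref{eq:h2}) where it is used to \emph{compute} $\hf^{N,n}_2$, not to prove the sandwich. Your proposal supplies a genuine proof, and it is essentially the standard argument behind the cited result: the block-tridiagonal structure, the identification $\langle\nf_\phi,\hf^{N,n}\rangle_\fock=\langle\nf_\phi,(S_2^{(n)})^{-1}\nf_\phi\rangle$ with $S_2^{(n)}$ matching the operator in \eqref{eq:h2}, the alternating operator monotonicity of the Schur complements, and the identification of the common limit via the a priori bound $\mu\|\hf^{N,n}\|_\fock\le\|\nf_\phi\|_\fock$ are all correct; in particular the sandwich does follow from monotonicity of the two subsequences plus their common limit, so you do not need a separate direct comparison with $K_3^{(\infty)}$.

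Two small points to tighten. First, the displayed recursion for $K_k^{(n+1)}-K_k^{(n)}$ only yields the adjacent comparisons $K_3^{(2j+1)}\le K_3^{(2j)},K_3^{(2j+2)}$; the monotonicity of the odd and even subsequences themselves (e.g.\ $K_3^{(3)}\le K_3^{(5)}$) requires running the \emph{same} downward induction on $K_k^{(m)}-K_k^{(n)}$ for general $m>n$, whose base case $K_n^{(m)}-K_n^{(n)}=\genap^*\bigl(K_{n+1}^{(m)}\bigr)^{-1}\genap\ge 0$ again holds for every $m>n$ and gives $\mathrm{sign}\bigl(K_3^{(m)}-K_3^{(n)}\bigr)=(-1)^{n-3}$ for all $m>n$; state this explicitly, since it is what produces the full interlacing chain and the monotonicity claimed in the lemma. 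Second, the additional Fourier-mode truncation you propose at the end is unnecessary: at fixed $N$ the multipliers in \eqref{e:genap:fock}--\eqref{e:genam:fock} contain the cut-off $\indN{}$, so $\genap$ and $\genam$ are already bounded on each fixed $\fock_n$ (with an $N$- and $n$-dependent bound), all the $K_k^{(n)}$ are self-adjoint with $K_k^{(n)}\ge\mu$, and the finitely many Schur-complement steps on $\fock_{\le n}$ are legitimate as they stand; the only genuinely unbounded operator is $\gensy$, which enters only through its (bounded) inverse.
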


Notice that, thanks to Lemma~\ref{l:Sandwich}, on the one hand we have reduced the problem of studying the solution of the 
full generator equation (which is the same as~\eqref{e:GenEqun} with $\gen$ replacing $\gen_n$) to that 
of its truncated version given in~\eqref{e:GenEqun}. On the other hand, by orthogonality 
\begin{equ}
\langle \nf_\phi,\hf^{N,n}\rangle_{\fock}=\langle \nf_\phi,\hf^{N,n}_2\rangle_{\fock_2},\qquad\text{ for all $n\in\N$,}
\end{equ}
so that we only need to determine the component of $\hf^{N,n}$ in $\fock_2$. 

Getting back to~\eqref{e:GenEqun}, by Lemma~\ref{lem:generator} 
$\gen$ can be decomposed in the sum of $\gensy$, $\genap$ and $\genam$, the first of which leaves the order 
of the Wiener chaos component invariant, whereas the others respectively increase and decrease it by $1$. 
Thus, the truncated generator equation coincides with the following hierarchical system 
\begin{equation}\label{e:System}
\begin{cases}
\big(\mu-\gensy\big)\hf^{N,n}_n-\genap\hf^{N,n}_{n-1}=0,\\
\big(\mu-\gensy\big)\hf^{N,n}_{n-1}-\genap\hf^{N,n}_{n-2}-\genam\hf^{N,n}_{n}=0,\\
\dots\\
\big(\mu-\gensy\big)\hf^{N,n}_2-\genap\hf^{N,n}_1-\genam\hf^{N,n}_3=\nf_\phi,\\
\big(\mu-\gensy\big)\hf^{N,n}_1-\genam\hf^{N,n}_2=0,
\end{cases}
\end{equation}
where, in the last equation we exploited the fact that $\genap$ is $0$ on constants by Lemma~\ref{lem:generator}.
Let us introduce the operators $\Op_k$, $k\geq 2$, which are recursively defined via 
\begin{equation}\label{def:OpH}
\begin{cases}
\Op_2\equiv 0\,,\\
\Op_k = -\genam[\big(\mu-\gensy\big)+\Op_{k-1}]^{-1}\genap\,,\qquad k\geq 3
\end{cases}
\end{equation}
and which satisfy the properties stated in the following lemma. 

\begin{lemma}\label{lem:OpH}
For any $k\geq 3$, the operators $\Op_k$ in~\eqref{def:OpH} are positive definite and 
such that, for all $n\in\N$, $\Op_k(\wc_n)\subset\wc_n$. 
\end{lemma}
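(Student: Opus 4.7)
The plan is to prove both claims by simultaneous induction on $k\geq 3$, relying on three ingredients supplied by Lemma~\ref{lem:generator}: (i) $\gensy$ preserves each chaos $\wc_n$ and, being the symmetric part of a Markov generator, is negative semi-definite on $\fock$, so that $\mu-\gensy$ is bounded below by $\mu$ and admits a self-adjoint, strictly positive, chaos-preserving inverse; (ii) $\genap$ maps $\wc_n$ into $\wc_{n+1}$ while $\genam$ maps $\wc_n$ into $\wc_{n-1}$; (iii) $(\genap)^*=-\genam$, equivalently $-\genam=(\genap)^*$.

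For the base case $k=3$, I would rewrite
\begin{equation*}
\Op_3 \;=\; -\genam(\mu-\gensy)^{-1}\genap \;=\; (\genap)^{*}\,(\mu-\gensy)^{-1}\,\genap
\end{equation*}
using (iii). The chaos invariance $\Op_3(\wc_n)\subset\wc_n$ then follows by composing the mapping properties: $\genap$ sends $\wc_n$ into $\wc_{n+1}$, $(\mu-\gensy)^{-1}$ preserves $\wc_{n+1}$ by (i), and $\genam$ returns us to $\wc_n$. For positive definiteness, let $T$ denote the self-adjoint, positive square root of $(\mu-\gensy)^{-1}$; then for every $f\in\fock$,
\begin{equation*}
\langle f,\Op_3 f\rangle_{\fock} \;=\; \langle \genap f,(\mu-\gensy)^{-1}\genap f\rangle_{\fock} \;=\; \|T\,\genap f\|_{\fock}^{2}\;\geq\;0.
\end{equation*}

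For the inductive step, I would assume that $\Op_{k-1}$ is positive semi-definite and preserves each $\wc_n$. Then $(\mu-\gensy)+\Op_{k-1}$ is the sum of a strictly positive operator (bounded below by $\mu$) and a positive operator, hence is strictly positive and invertible; its inverse is self-adjoint and positive, and preserves each $\wc_n$ since both summands do. The representation
\begin{equation*}
\Op_k \;=\; (\genap)^{*}\bigl[(\mu-\gensy)+\Op_{k-1}\bigr]^{-1}\genap
\end{equation*}
reduces the inductive step to exactly the computation performed in the base case, yielding both chaos invariance and $\langle f,\Op_k f\rangle_{\fock}\geq 0$.

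The argument is essentially formal, so the only genuine obstacle is to justify that $[(\mu-\gensy)+\Op_{k-1}]^{-1}$ is a well-defined operator with the claimed properties, given that $\gensy$, $\genap$, $\genam$ are a priori unbounded. This is handled by observing that the Fourier cut-off $\indN{}$ in~\eqref{e:nonlinF} restricts all momentum sums to $|k|\leq N$, so that $\genap$, $\genam$ and therefore each $\Op_{k-1}$ act as bounded operators on every fixed chaos $\wc_n$; the entire construction can thus be carried out sectorwise on $\wc_n$ (a single chaos at a time), where no domain issues arise and the spectral calculus needed to extract the square root $T$ is immediate.
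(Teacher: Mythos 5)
Your proposal is correct and follows essentially the same route as the paper: the case $k=3$ is handled by the adjoint relation $(\genap)^{*}=-\genam$ together with the chaos-mapping properties from Lemma~\ref{lem:generator} and the square root of $(\mu-\gensy)^{-1}$, and the general case by induction using the recursive definition~\eqref{def:OpH}. You merely spell out the inductive step (positivity, invertibility and chaos-invariance of $(\mu-\gensy)+\Op_{k-1}$) and the sectorwise boundedness coming from the Fourier cut-off, which the paper leaves implicit.
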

\begin{proof}
We first consider the case $k=3$. Since 
$\mu-\gensy$ is positive for every $\mu\geq0$, we have 
\begin{equs}
\langle \Op_3\psi,\psi\rangle_{\fock}&=\langle -\genam\big(\mu-\gensy\big)^{-1}\genap\psi,\psi\rangle_{\fock}\\
&= \langle (\mu-\gensy)^{-1}\genap\psi,\genap\psi\rangle_{\fock}=\|(\mu-\gensy)^{-\half}\genap\psi\|_{\fock}^2\geq 0
\end{equs}
while by Lemma~\ref{lem:generator}, $\Op_3(\wc_n)\subset\wc_n$. 
For $k>3$, the result can be proved inductively using the recursive definition of $\Op_k$. 
\end{proof}

Now, upon solving~\eqref{e:System} for $\hf^{N,n}$, starting from the first equation in~\eqref{e:System} and using the definition of $\Op_k$ in~\eqref{def:OpH}, 
we see that $\hf^{N,n}_2$ can be written as\footnote{the other components $\hf^{N,n}_j, j\ne 2$ can also be written down explicitly in terms of $   \hf^{N,n}_2$ and of the operators $\Op_n$, but we do not need their explicit expression.}
\begin{equ}\label{eq:h2}
\hf^{N,n}_2=\big(\big(\mu-\gensy\big)+\Op_n-\genap\big(\mu-\gensy\big)^{-1}\genam\big)^{-1}\nf_\phi\,,
\end{equ}
and consequently, for every $n\in\N$, we have 
\begin{equ}[e:Final]
\langle \nf_\phi,\hf^{N,n}\rangle_{\fock}=\langle \nf_\phi,\big(\big(\mu-\gensy\big)+\Op_n-\genap\big(\mu-\gensy\big)^{-1}\genam\big)^{-1}\nf_\phi\rangle_{\fock_2}\,.
\end{equ}
Therefore, to take advantage of Lemma~\ref{l:Sandwich}, it suffices to derive suitable bounds 
on the operators $\Op_n$ and $\genap(\mu-\gensy)^{-1}\genam$ and the rest of the section is indeed 
devoted to this purpose. 

\subsection{An iterative approach for the operators $\Op_n$'s}\label{sec:iteration}
The advantage of dealing with diagonal (positive) operators, in the sense of Definition \ref{def:diagonal}, is that their inverse is fully explicit 
and easily computable.
The difficulty in getting upper and lower bounds for the operators $\Op_n$ is that, even though they are diagonal with respect to the chaos, they are definitely not in Fourier space.
Hence, it is a priori hard to determine any bound on their inverse,  which is though essential
given that, by~\eqref{def:OpH}, for all $k$, $\Op_k$ 
is {\it defined} in terms of the inverse of $\Op_{k-1}$. 
Therefore, the goal of this section is to show that it is possible to recursively estimate the $\Op_k$'s 
in terms of diagonal operators $\cS_k$ (see Theorem~\ref{thm:Main}). 
Let us begin by giving some preliminary definitions, necessary to rigorously 
introduce the $\cS_k$'s. 
%
%
\medskip 

Let $k\in\N$ and $\lambda>0$. 
For $x>0$ and $z\geq 1$, we define the functions $\Ll$, $\LB_{k}$ and $\UB_{k}$ (here $\LB$ stands for lower bound and $\UB$ for upper bound)
on $\R_+\times[1,\infty)$ as follows
\begin{gather}
\Ll(x,z)\eqdef \lambda^2(z+\log(1+x^{-1}))+1\,,\label{e:L}\\
\LB_{k}(x,z)\eqdef \sum_{j\leq k}\frac{(\frac12\log \Ll(x,z))^j}{j!}\,\quad\text{and}\quad\UB_{k}(x,z)\eqdef \frac{\Ll(x,z)}{\LB_{k}(x,z)}\label{e:LUBk}\,.
\end{gather}
For $n\in\N$ and any $\delta>0$, set 
\begin{equ}[e:Functions]
z_k(n)\eqdef K_1(\lambda^2\vee 1)(n+k)^{3+2\delta},\,\qquad f_k(n)\eqdef K_2\sqrt{z_k(n)}
\end{equ}
where $K_1,K_2$ are sufficiently large absolute positive constants which will be fixed below (they will be chosen 
in such a way that~\eqref{e:K2} and~\eqref{e:CondonK} hold). As we will need the following fact below, 
note that for any $n,\,k\in\N$, $z_k$ and $f_k$ trivially satisfy 
\begin{equ}[e:+1]
f_k(n+1)=f_{k+1}(n)\qquad\text{ and }\qquad z_k(n+1)=z_{k+1}(n)\,.
\end{equ}
We are now ready to introduce the operators $\cS_k$.

\begin{definition}\label{def:OpS}
Let $\lambda,\mu>0$. 
Let $\cS_2^N$ be the operator which is identically equal to $0$ and, for $k\in\N$, $k\geq 3$ and $N\in\N$, 
define $\cS_k^N$ via 
\begin{equ}[e:SN]
\cS_k^N\eqdef
\begin{cases}
f_k(\cN)\,\sigma_k^N(\mu-\gensy,z_k(\cN))\,, &\text{if $k$ is odd,}\\
\frac{1}{f_k(\cN)}\Big[\sigma_k^N(\mu-\gensy, z_k(\cN))-f_k(\cN)\Big]\,, &\text{if $k$ is even,}
\end{cases}
\end{equ}
where $f_k$ and $z_k$ are given in~\eqref{e:Functions}, $\cN$ is the {\it number operator} - 
the operator such that, for any $g\colon \N\to\R$, the action of $g(\cN)$ in Fock space 
is $g(\cN)\phi_n=g(n) \phi_n$, $\phi_n\in \fock_n$ - and $\gensy$ is given as in~\eqref{e:gens:fock}. 
At last, the Fourier multiplier 
\begin{equ}
\sigma_k^N(x,z)\eqdef \sigma_k(x/N^2, z)
\end{equ}
is such that 
\begin{equ}[e:sigmaN]
\sigma_k(x,z)\eqdef
\begin{cases}
\UB_{\frac{k-3}{2}}(x,z)\,, &\text{if $k$ is odd,}\\
\LB_{\frac{k}{2}-1}(x\vee1,z)\,, &\text{if $k$ is even.}
\end{cases}
\end{equ}
In what follows we will write $\LlN(x,\cdot)$, $\LBN_k(x,\cdot)$, $\UBN_k(x,\cdot)$ for 
$\Ll(x/N^2,\cdot)$, $\LB_k(x/N^2,\cdot)$, $\UB_k(x/N^2,\cdot)$, respectively.
\end{definition}

The following theorem is the main result of this section and establishes the previously 
announced bounds on the operators $\Op_k$. 

\begin{theorem}\label{thm:Main}
Let $\lambda>0$, $\mu>0$, $N\in\N$ and $\{\Op_n\}_{n\geq 2}$ be the family of operators on 
$L^2(\eta)$ recursively defined according to~\eqref{def:OpH}. 
Then, for all $k\in\N$ and any $\delta>0$ there exist constants $c^+_{2k+1}=c^+_{2k+1}(\delta)>1$ 
and $c^-_{2k+2}=c^-_{2k+2}(\delta)<1$ independent of $\mu$ and $N$ such that the following bounds hold
\begin{equs}
\Op_{2k+1}&\leq c^+_{2k+1} \,(-\gensy)\cS_{2k+1}^N\label{e:UltimateUB1}\,,\\
\Op_{2k+2}&\geq c^-_{2k+2} \,(-\gensy)\cS_{2k+2}^N\label{e:UltimateLB1}\,,
\end{equs}
where the operators $\{\cS_m\}_{m\geq 2}$ are given in Definition~\ref{def:OpS}\footnote{For any two operators $\cZ_1$ 
$\cZ_2$ on $\fock$, $\cZ_1\leq\cZ_2$ if and only if for all $\phi\in\fock$, $\langle \cZ_1\phi,\phi\rangle_{\fock}\leq \langle \cZ_2\phi,\phi\rangle_{\fock}$. }.
Further, the constants $c^+_{2k+1}$, $c^-_{2k+2}$ can be chosen as $c_2^-=\tfrac12$, 
\begin{equ}[e:Constants]
c^+_{2k+1}\eqdef \frac{1}{\pi c^-_{2k}}\Big(1+\frac{1}{2 k^{1+\delta}}\Big)  \qquad\text{and}\qquad   c^-_{2k+2}\eqdef \frac{1}{\pi (1+\frac{1}{f_{2k+2}(1)})c_{2k+1}^+}\Big(1-\frac{1}{2 k^{1+\delta}}\Big)
\end{equ}
so that, in particular, the sequences $\{c^+_{2k+1}\}_{k\ge1}$ and $\{c^-_{2k+2}\}_{k\ge1}$ 
tend to two positive and finite limits as $k\to\infty$.
\end{theorem}
\begin{remark}
To obtain a better understanding of the importance of the above result note that since $\lim_{k\to\infty} \LB_k(x,z)=\lim_{k\to\infty}\UB(x,z)= \sqrt{\Ll(x,z)}$ Equations~\eqref{e:UltimateUB1} and~\eqref{e:UltimateLB1} essentially state that for $k$ sufficiently large $\Op_k \approx (-\gens) \sqrt{\log(\mu-\gens)}$, which is very much in the form needed to conclude Theorems~\ref{thm:BD} and~\ref{thm:main2}.
	\end{remark}

The rest of the section is devoted to the proof of the previous statement and its application 
in estimating~\eqref{e:Laplace} (see Proposition~\ref{p:mainB}). 
We will first make some observations and prove some statements that will streamline the subsequent analysis. 

We will show Theorem~\ref{thm:Main} inductively on $k$, which, modulo the initial inductive step, 
reduces to prove that if~\eqref{e:UltimateUB1} 
holds then so does~\eqref{e:UltimateLB1} with $c_{2k+2}^-$ as in~\eqref{e:Constants} and viceversa. 
Note that thanks to the definition of $\{\Op_n\}_{n\geq 2}$ in~\eqref{def:OpH},~\eqref{e:UltimateUB1} implies 
\begin{equ}
\cH_{2k+2}^N \geq -\genam\left(\mu -\gensy\left(1+c_{2k+1}^+\cS_{2k+1}^N\right)\right)^{-1}\genap\,,
\end{equ}
while~\eqref{e:UltimateLB1} implies
\begin{equ}
\Op_{2k+3} \leq -\genam\left(\mu -\gensy\left(1+c_{2k+2}^+\cS_{2k+2}^N\right)\right)^{-1}\genap\,.
\end{equ}
Here we use the fact that for any two positive operators $A,B$,
\begin{equ}
  [e:AByau]
  0<A\le B \quad \text{if and only if} \quad 0<B^{-1}\le A^{-1}.
\end{equ}
Now, the operators at the right hand sides above 
are of the form $-\genam \cZ\genap$ for some diagonal operator $\cZ$. 
Therefore, the quantity we need to bound is 
\begin{equ}
\langle -\genam \cZ\genap\phi,\phi\rangle_{\fock}=\langle  \cZ\genap\phi,\genap\phi\rangle_{\fock}
\end{equ} 
where we used that $-\genam=(\genap)^*$ by Lemma~\ref{lem:generator}. 
Let us derive a decomposition of the latter which will be useful in highlighting the relevant contributions 
to the scalar product. 

\begin{lemma}\label{l:DiagOffDiag}
Let $\cZ$ be a diagonal operator on $\fock$ with Fourier multiplier $\zeta=(\zeta_n)_{n\in\N}$. 
Then, for every $\phi\in\fock_n$, the following decomposition holds
\begin{equ}
\langle \cZ\genap\phi,\genap\phi\rangle_{\fock_{n+1}}=\langle \cZ\genap\phi,\genap\phi\rangle_{\Di} +\sum_{i=1}^2 \langle \cZ\genap\phi,\genap\phi\rangle_{\oD_i}
\end{equ}
where the first summand will be referred to as the ``diagonal part'' and is given by 
\begin{equation}\label{e:Diag}
\langle \cZ\genap\phi,\genap\phi\rangle_{\Di}\eqdef n!\, n \,2 \lambda^2 \sum_{k_{1:n}}|k_1|^2|\hat\phi(k_{1:n})|^2\sum_{\ell+m=k_1}\zeta_{n+1}(\ell,m,k_{2:n}){(\nonlin_{\ell,m})^2}
\end{equation}
while the other two terms will be referred to as the ``off-diagonal part of type $1$ and $2$'' and are 
respectively given by 
\begin{align}
\langle \cZ\genap\phi,&\genap\phi\rangle_{\oDi}\eqdef n!\,c_{\oDi}(n)\,\lambda^2\times\nonumber\\
&\times\sum_{k_{1:n+1}} \zeta_{n+1}(k_{1:n+1})   |k_1+k_2|\nonlin_{k_1,k_2}|k_1+k_3|\nonlin_{k_1,k_3}\times\label{e:OffDiag1}\\
&\qquad\qquad\times
\hat{\phi}(k_1+k_2,k_3,k_{4:n+1})\overline{\hat{\phi}(k_1+k_3,k_2,k_{4:n+1})}\,,\nonumber\\
\langle \cZ\genap\phi,&\genap\phi\rangle_{\ooDi}\eqdef n!\,c_{\ooDi}(n)\,\lambda^2\times\nonumber\\
&\times \sum_{k_{1:n+1}} \zeta_{n+1}(k_{1:n+1})   |k_1+k_2|\nonlin_{k_1,k_2}|k_3+k_4|\nonlin_{k_3,k_4} \times\label{e:OffDiag2}\\
&\qquad\qquad\times\hat{\phi}(k_1+k_2,k_3,k_4,k_{5:n+1})\overline{\hat{\phi}(k_3+k_4,k_1,k_2,k_{5:n+1})}\nonumber
\end{align}
where, for $i=1,\,2$, $c_{\oD_i}(n)$ is an explicit positive constant only depending on $n$ and such that $c_{\oD_i}(n)= O(n^{i+1})$. 
\end{lemma}
\begin{proof}
As stated in Lemma~\ref{lem:generator}, the right hand side of~\eqref{e:genap:fock} still needs to be symmetrised, 
so that, to be precise, we have 
\begin{equs}[eq:Fourier_asym]
\cF&(\genap\phi)(k_{1:n+1})=\frac{n\lambda}{(n+1)!}\sum_{s\in S_{n+1}}|k_{s(1)}+k_{s(2)}|\nonlin_{k_{s(1)},k_{s(2)}}\hat\phi(k_{s(1)}+k_{s(2)},k_{s(3):s(n+1)})\\
&=\frac{2\lambda}{n+1}\sum_{i<j}|k_i+k_j|\nonlin_{k_{i},k_{j}}\hat\phi(k_{i}+k_{j},k_{\{1:n+1\}\setminus\{i,j\}})=: \frac{2\lambda}{n+1}\sum_{i<j}(\genapc\phi_n)_{i,j}(k_{1:n+1})\,
\end{equs}
where $S_{n+1}$ is the set of permutations of $\{1,\dots,n+1\}$. 
Then, by the definition of $\langle\cdot,\cdot\rangle_{\fock_n}$ in~\eqref{e:ScalarProdFock}, 
\begin{equ}
\langle \cZ\genap\phi,\genap\phi\rangle_{\fock_{n+1}}=n!\frac{4\lambda^2 }{n+1}\sum_{k_{1:n+1}}\zeta_{n+1}(k_{1:n+1})\sum_{\underline{i},\underline{j}\in I}\prod_{\ell=1}^2(A_+^N\phi_n)_{i_\ell,j_\ell}(k_{1:n+1})
\end{equ}
where $\underline{i}=(i_1,i_2),\,\underline{j}=(j_1,j_2)\in\{1,\dots,n+1\}^2$ and $I$ is the set 
$\{(\underline{i},\underline{j})\,:\,i_1<j_1\text{ and }i_2<j_2\}$. We now split $I$ into 
the disjoint union of $I_m$, $m=0,1,2$, containing those $(\underline{i},\underline{j})\in I$ 
such that $|\{i_1,j_1\}\cap\{i_2,j_2\}|=m$. 
By using basic combinatorics (see also~\cite[Section 4.1]{Yau}), it is not hard to see that 
\begin{equ}[e:Cardinality]
|I_2|={n+1\choose 2}\,,\quad |I_1|=2  {n+1\choose 2} (n-1)\,,\quad\text{and}\quad |I_0|=\tfrac{(n+1)n(n-1)(n-2)}{4}\,.
\end{equ}
Then, 
\begin{equ}
\langle \cZ\genap\phi,\genap\phi\rangle_{\fock_{n+1}}=n!\frac{4\lambda^2 }{n+1}\sum_{m=0}^2\sum_{k_{1:n+1}}\zeta_{n+1}(k_{1:n+1})\sum_{\underline{i},\underline{j}\in I_m}\prod_{\ell=1}^2(A_+^N\phi_n)_{i_\ell,j_\ell}(k_{1:n+1})
\end{equ} 
which, by relabelling the variables and using~\eqref{e:Cardinality} to derive the value of the prefactor of~\eqref{e:Diag} 
and of the constants $c_{\oDi}$ and $c_{\ooDi}$, 
gives the decomposition we were after - the diagonal 
term~\eqref{e:Diag} is the summand corresponding to $m=2$, the off-diagonal term of the first type~\eqref{e:OffDiag1} 
that with $m=1$ and the off-diagonal of the second type~\eqref{e:OffDiag2} that with $m=0$. 
\end{proof}

The following lemma provides a condition under which 
we can easily deduce operator bounds for the diagonal term. 

\begin{lemma}\label{l:Diag}
Let $\cZ_1$ and $\cZ_2$ be diagonal operators on 
$\fock$ with Fourier multipliers $\zeta^i=(\zeta^i_n)_n$, $i=1,2$. 
If for every $k_{1:n}\in\Z^{2n}$ 
\begin{equ}[e:BoundSym]
\sum_{\ell+m=k_1}(\nonlin_{\ell,m})^2\zeta^1_{n+1}(\ell,m,k_{2:n})\leq \zeta^2_n(k_{1:n})\,,
\end{equ}
where the sum is over $\ell,\,m\in\Z^2$, then for every $\phi\in\fock_n$
\begin{equ}[e:GenBoundDiagonal]
\langle \cZ_1\genap\phi,\genap\phi\rangle_\Di\leq 4\lambda^2\langle (-\gensy)\cZ_2\phi,\phi\rangle_{\fock_n}\,.
\end{equ}
If instead 
~\eqref{e:BoundSym} holds with the opposite inequality,   
then so does~\eqref{e:GenBoundDiagonal}. 
\end{lemma}
\begin{proof}
By~\eqref{e:Diag}, we have 
\begin{equs}
\langle \cZ_1\genap\phi,&\genap\phi\rangle_{\Di}\eqdef n!\, n \,2 \lambda^2 \sum_{k_{1:n}}|k_1|^2|\hat\phi(k_{1:n})|^2\sum_{\ell+m=k_1}\zeta^1_{n+1}(\ell,m,k_{2:n}){(\nonlin_{\ell,m})^2}\\
&\leq n!\, n \,2 \lambda^2 \sum_{k_{1:n}}|k_1|^2\zeta^2_n(k_{1:n})|\hat\phi(k_{1:n})|^2=n! 4\lambda^2\sum_{k_{1:n}}\tfrac12 |k_{1:n}|^2\zeta^2_n(k_{1:n})|\hat\phi(k_{1:n})|^2
\end{equs}
where the first inequality is a consequence of~\eqref{e:BoundSym}, while in the second equality we 
simply symmetrised the arguments. 
Then,~\eqref{e:GenBoundDiagonal} is an immediate consequence of the definition of the scalar 
product on $\fock_n$ (see~\eqref{e:ScalarProdFock}). 
\end{proof}

In view of the results above, we are ready to state and prove 
the three lemmas which represent the core of the proof of Theorem~\ref{thm:Main}. 
In the first two we respectively show a lower and an upper bound for the diagonal term, while in the last we 
focus on the off-diagonal terms. 

\begin{lemma}\label{l:UBtoLB}
Let $\lambda,\mu>0$, $n,\,k\in\N$ and $c>1$. 
Then, for any $\phi\in\fock_n$
\begin{equ}[e:UBtoLB]
\langle (\mu-\gensy(1+c\cS^N_{2k+1}))^{-1}\genap\phi,\genap\phi\rangle_\Di
\geq \frac{1}{\pi c(1+\frac1{f_{2k+2}(1)}) }\langle (-\gensy)\tilde\cS^N_{2k+2} \phi,\phi\rangle_{\fock_n} 
\end{equ}
where the operator $\tilde\cS^N_{2k+2}$ is defined as 
\begin{equ}[e:tildeSN]
\tilde\cS^N_{2k+2}\eqdef \frac{1}{f_{2k+2}(\cN)}\Big[\sigma^N_{2k+2}(\mu-\gensy, z_{2k+2}(\cN))\Big(1-\frac{4\pi \lambda C_{\Di}}{\sqrt{z_{2k+2}(1)}}\Big)-\frac{f_{2k+2}(\cN)}{2}\Big]
\end{equ}
and the constant $C_\Di>0$ depends just on the constant $K$ of Lemmas~\ref{l:Approx} and~\ref{lemma:RiemannNew}. 
\end{lemma}
\begin{proof}
Thanks to Lemma~\ref{l:Diag} applied to the operator $\cZ_1\eqdef (\mu-\gensy(1+c\cS^N_{2k+1}))^{-1}$, it is sufficient to focus on 
\begin{equ}
\sum_{\ell+m=k_1}\frac{(\nonlin_{\ell,m})^2}{\mu+\Gamma(\ell,m,k_{2:n})(1+cf_{2k+2}\UB^N_{k-1}(\mu+\Gamma(\ell,m,k_{2:n}),z_{2k+2}))}
\end{equ}
where we recall the definition of $\sigma^N_{2k+1}$ in~\eqref{e:sigmaN}, and 
we set 
\begin{equ}[e:Conv1]
\Gamma(\ell,m,k_{2:n})\eqdef\tfrac12(|\ell|^2+|m|^2+|k_{2:n}|^2)\,,\qquad \ell,m,k_2, \dots,k_n\in\Z^2\,.
\end{equ}
We omitted the dependence on $n$ of $f_{2k+2}$ and $z_{2k+2}$ since $n$ will be fixed throughout, 
and used~\eqref{e:+1} to justify the subscript of $f_{2k+2}$ and $z_{2k+2}$ (note that $\cZ_1$ is applied to 
$\genap\phi$ which is in the $n+1$-th chaos). 
Since $c,\,f_{2k+2}$ and $\UB_{k-1}$ are all bigger than $1$, the previous is 
lower bounded by 
\begin{equ}[e:MainQnty]
\frac{1}{cf_{2k+2}(1+\frac1{f_{2k+2}})}\sum_{\ell+m=k_1}\frac{(\nonlin_{\ell,m})^2}{\mu+\Gamma(\ell,m,k_{2:n})\UB^N_{k-1}(\mu+\Gamma(\ell,m,k_{2:n}),z_{2k+2})}.
\end{equ}
Now, upon choosing $F^N(x,z)=\UB^N_{k-1}(x,z)$ and introducing the short-hand notation 
\begin{equ}[e:alphaN]
\alpha_N\eqdef \mu/N^2+\tfrac12|k_{1:n}/N|^2\,,
\end{equ}
Lemmas~\ref{l:Approx} and~\ref{lemma:RiemannNew}
imply that there exists a constant $K>0$ such that the sum in~\eqref{e:MainQnty} is bounded from below by 
\begin{equation}\label{e:MainQnty1}
\begin{aligned}
&\int_{\R^2}\frac{(\nonlin_{xN,-xN})^2}{(|x|^2+\alpha_N)(1+|x|^2+\alpha_N)\UB_{k-1}(|x|^2+\alpha_N,z_{2k+2})}\dd x-K\frac{\LB_{k-1}(\alpha_N\vee\tfrac{1}{N^2},z_{2k+2})}{\lambda\sqrt{z_{2k+2}(n)}}\\
&\geq\int_{\R^2}\frac{(\nonlin_{xN,-xN})^2}{(|x|^2+\alpha_N)(1+|x|^2+\alpha_N)\UB_{k-1}(|x|^2+\alpha_N,z_{2k+2})}\dd x-K\frac{\LB_{k}(\alpha_N\vee\tfrac{1}{N^2},z_{2k+2})}{\lambda\sqrt{z_{2k+2}(1)}}
\end{aligned}
\end{equation}
where we used the definition of $\UB_{k-1}$ and $\LB_{k-1}$ in~\eqref{e:LUBk}, 
the monotonicity of the latter in $k$ and that $z_{2k+2}(n)\geq z_{2k+2}(1)$. 
To control the first term above, notice that, in polar coordinates, i.e. 
setting $x=r(\cos\theta,\sin\theta)$, the Fourier coefficient of the non-linearity~\eqref{e:nonlinCoefficient} reads
\begin{equ}[e:NonlinPolar]
\nonlin_{xN,-xN}=-\frac{1}{2\pi}\cos(2\theta)\1_{r\in[1/N,1]}
\end{equ}
so that the integral in~\eqref{e:MainQnty1} factorises and we get
\begin{equs}[e:MainStepLB]
\int_{\R^2}&\frac{(\nonlin_{xN,-xN})^2}{(|x|^2+\alpha_N)(1+|x|^2+\alpha_N)\UB_{k-1}(|x|^2+\alpha_N,z_{2k+2})}\dd x\\
&=\int_0^{2\pi}\Big(\frac{\cos(2\theta)}{2\pi}\Big)^2\dd \theta\int_{\tfrac1N}^1\frac{r\dd r}{(r^2+\alpha_N)(1+r^2+\alpha_N)\UB_{k-1}(r^2+\alpha_N,z_{2k+2})}\\
&=\frac{1}{8\pi}\int_{\tfrac1{N^2}+\alpha_N}^{1+\alpha_N}\frac{\dd\rho}{\rho(\rho+1)\UB_{k-1}(\rho,z_{2k+2})}\\
&=\frac{1}{4\pi\lambda^2}\Big[\LB_k(\tfrac1{N^2}+\alpha_N, z_{2k+2})-\LB_k(1+\alpha_N, z_{2k+2})\Big]
\end{equs}
where in the last step we exploited~\eqref{e:IntUBtoLB}. Notice now that, by using the same identity, 
\begin{equs}
\LB_k(\tfrac1{N^2}&+\alpha_N, z_{2k+2})=\LB_k(\tfrac1{N^2}\vee\alpha_N, z_{2k+2})-\frac{\lambda^2}{2}\int_{\tfrac1{N^2}\vee\alpha_N}^{\tfrac1{N^2}+\alpha_N}\frac{\dd\rho}{\rho(\rho+1)\UB_{k-1}(\rho,z_{2k+2})}\\
&=\LB_k(\tfrac1{N^2}\vee\alpha_N, z_{2k+2})-\frac{\lambda^2}{2}\int_{\tfrac1{N^2}\vee\alpha_N}^{\tfrac1{N^2}+\alpha_N}\frac{\LB_{k-1}(\rho,z_{2k+2})\dd\rho}{\rho(\rho+1)\Ll(\rho,z_{2k+2})}\\
&\geq \LB_k(\tfrac1{N^2}\vee\alpha_N, z_{2k+2})\Big(1-\frac{\lambda^2}{2}\int_{\tfrac1{N^2}\vee\alpha_N}^{\tfrac1{N^2}+\alpha_N}\frac{\dd\rho}{\rho\Ll(\rho,z_{2k+2})}\Big)\\
&\geq \LB_k(\tfrac1{N^2}\vee\alpha_N, z_{2k+2})\Big(1-\frac{\lambda^2}{2z_{2k+2}}\int_{\tfrac1{N^2}\vee\alpha_N}^{\tfrac1{N^2}+\alpha_N}\frac{\dd\rho}{\rho}\Big)\\
&\geq  \LB_k(\tfrac1{N^2}\vee\alpha_N, z_{2k+2})\Big(1-\frac{\lambda^2}{2z_{2k+2}}\frac{\frac1{N^2}+\alpha_N-\tfrac1{N^2}\vee\alpha_N}{\tfrac1{N^2}\vee\alpha_N}\Big)\\
&\geq \LB_k(\tfrac1{N^2}\vee\alpha_N, z_{2k+2})\Big(1-\frac{\lambda^2}{\sqrt{z_{2k+2}(1)}}\Big)
\end{equs}
where in the first step we exploited the definition of $\UB_{k}$ in~\eqref{e:LUBk}, in the second 
the fact that $\LB_{k-1}(\rho,z)$ is decreasing in $\rho$ and increasing in $k$ by Lemma~\ref{l:MainIntegrals} 
and in the last that the fraction involving $\alpha_N$ is bounded above by $2$ and $z_{2k_2}(n)$ is increasing 
in $n$ and greater or equal to $1$. 
Moreover, 
\begin{equs}[e:K2]
\LB_k(1+\alpha_N, z_{2k+2})&\leq \LB_k(1, z_{2k+2}) \leq\sqrt{\Ll(1,z_{2k+2})}\\
&=\sqrt{\lambda^2(z_{2k+2}+\log 2)+1}\leq \tfrac12 f_{2k+2}
\end{equs}
which in turn is a consequence of the fact that $\LB_k$ is decreasing 
in the first variable (see Lemma~\ref{l:MainIntegrals}),~\eqref{e:LBExp} and 
a choice of a sufficiently large $K_2$ in~\eqref{e:Functions}. 

As a consequence,~\eqref{e:MainStepLB} is lower bounded by 
\begin{equ}
\frac{1}{4\pi\lambda^2}\Big[\LB_k(\tfrac1{N^2}\vee\alpha_N, z_{2k+2})- \frac{f_{2k+2}}{2}\Big]
\end{equ}
so that, in conclusion, there exists a $C_{\Di}$ so that~\eqref{e:MainQnty} is lower bounded by 
\begin{equs}
\frac{1}{4\lambda^2\pi c(1+\frac1{f_{2k+2}(1)})}\frac{1}{f_{2k+2}(n)}\Big[\LB_k(\tfrac1{N^2}\vee\alpha_N, z_{2k+2}(n))\Big(1-\frac{4\pi \lambda C_{\Di}}{\sqrt{z_{2k+2}(1)}}\Big)-\frac{f_{2k+2}(n)}{2}\Big]\,,
\end{equs}
where we additionally used that $n\mapsto f_{2k+2}(n)$ is increasing to replace $f_{2k+2}(n)$ by $f_{2k+2}(1)$.
Applying Lemma~\ref{l:Diag} is sufficient to conclude the proof. 
\end{proof}

\begin{lemma}\label{l:LBtoUB}
Let $\lambda,\mu>0$, $n,\,k\in\N$ and $c<1$. 
Then, for any $\phi\in\fock_n$
\begin{equs}[e:LBtoUB]
\langle (\mu-\gensy(1+&c\cS^N_{2k+2}))^{-1}\genap\phi_n,\genap\phi_n\rangle_\Di\\
&\leq  \frac{1}{\pi c}\Big(1+\frac{4\lambda\pi C_{\Di}}{\sqrt{z_{2k+2}(1)}}\Big)\langle (-\gensy)\cS^N_{2k+3}\phi,\phi\rangle_{\fock_n}\,.
\end{equs}
\end{lemma}
\begin{proof}
Applying Lemma~\ref{l:Diag} to the operator $\cZ_1\eqdef (\mu-\gensy(1+c\cS^N_{2k+2}))^{-1}$, 
we see that we can focus on 
\begin{equ}[e:MainQntyUB]
\sum_{\ell+m=k_1}\frac{(\nonlin_{\ell,m})^2}{\mu+\Gamma(\ell,m,k_{2:n})(1+\frac{c}{f_{2k+3}}[\LB^N_{k}((\mu+\Gamma(\ell,m,k_{2:n}))\vee 1,z_{2k+3})-f_{2k+3}])}
\end{equ}
where, as in the proof of Lemma~\ref{l:UBtoLB}, we recalled the definition of $\sigma^N_{2k+1}$ in~\eqref{e:sigmaN}, 
defined $\Gamma$ as in~\eqref{e:Conv1}, 
omitted the dependence on $n$ of $f_{2k+2}$ and $z_{2k+2}$ since $n$ will be fixed throughout, 
and used~\eqref{e:+1}. 
Note that by assumption $1-c>0$ so that 
the sum above can be upper bounded by
\begin{equ}[e:MainQntyUBLB]
\frac{f_{2k+3}}{c}\sum_{\ell+m=k_1}\frac{(\nonlin_{\ell,m})^2}{\mu+\Gamma(\ell,m,k_{2:n})\LB^N_{k}(\mu+\Gamma(\ell,m,k_{2:n}),z_{2k+3})}
\end{equ}
where we removed the $\vee$ at the denominator of~\eqref{e:MainQntyUB} as, 
by the definition of $\nonlin_{\ell,m}$ in~\eqref{e:nonlinCoefficient} and 
of $\indN{\ell,m}$ in~\eqref{eq:JN}, both $\ell$ and $m$ are such that $|\ell|,|m|\geq 1$ thus ensuring 
$\mu+\Gamma(\ell,m,k_{2:n})\geq 1$. 
Now, upon choosing $F^N(x,z)=\LB^N_{k}(x,z)$ and defining $\alpha_N$ according to~\eqref{e:alphaN}, 
Lemmas~\ref{l:Approx} and~\ref{lemma:RiemannNew}, 
imply that the sum in~\eqref{e:MainQntyUBLB} is bounded from above by 
\begin{equation}\label{e:MainQnty11}
\int_{\R^2}\frac{(\nonlin_{xN,-xN})^2}{(|x|^2+\alpha_N)(1+|x|^2+\alpha_N)\LB_{k}(|x|^2+\alpha_N,z_{2k+3})}\dd x+C_{\Di}\frac{\UB_{k}(\alpha_N,z_{2k+3})}{\lambda\sqrt{z_{2k+3}(1)}}
\end{equation}
where we used the definition of $\LB_{k}$ and $\UB_{k}$ in~\eqref{e:LUBk} and again that $z_{2k+3}(n)\geq z_{2k+3}(1)$. 
To control the integral, we proceed as in the proof of Lemma~\ref{l:UBtoLB} - we pass to 
polar coordinates and use~\eqref{e:NonlinPolar}, so that we obtain 
\begin{equs}
\int_{\R^2}&\frac{(\nonlin_{xN,-xN})^2}{(|x|^2+\alpha_N)(1+|x|^2+\alpha_N)\LB_{k}(|x|^2+\alpha_N,z_{2k+3})}\dd x\\
&=\int_0^{2\pi}\Big(\frac{\cos(2\theta)}{2\pi}\Big)^2\dd \theta\int_{\tfrac1N}^1\frac{r\dd r}{(r^2+\alpha_N)(1+r^2+\alpha_N)\LB_{k}(r^2+\alpha_N,z_{2k+3})}\\
&=\frac{1}{8\pi}\int_{\tfrac1{N^2}+\alpha_N}^{1+\alpha_N}\frac{\dd\rho}{\rho(\rho+1)\LB_{k}(\rho,z_{2k+3})}\\
&\leq \frac{1}{4\pi\lambda^2}\Big[\UB_k(\tfrac1{N^2}+\alpha_N,z_{2k+3})-\UB_k(1+\alpha_N, z_{2k+3})\Big]
\leq \frac{1}{4\pi\lambda^2}\UB_k(\alpha_N,z_{2k+3})
\end{equs}
where in the second to last step we exploited~\eqref{e:IntLBtoUB} and in the last the monotonicity of $\UB$ 
as stated in Lemma~\ref{l:MainIntegrals}. 

Summarising what done so far, we have showed that~\eqref{e:MainQntyUBLB} is bounded above by 
\begin{equ}
\frac{1}{4\lambda^2 \pi c}\Big(1+C_{\Di}\frac{4\lambda\pi}{\sqrt{z_{2k+2}(1)}}\Big)f_{2k+3}(n)\UB_k(\alpha_N,z_{2k+3}(n))
\end{equ}
so that~\eqref{e:LBtoUB} follows at once by Lemma~\ref{l:Diag} and the definition of $\cS_{2k+3}$ in~\eqref{e:SN}. 
\end{proof}

\begin{lemma}\label{l:OffDiagBound}
For $m\in\N$, $m\geq 2$, let $c_m$ be a constant such that if $m$ is odd then $c_m>1$, while if $m$ is even $c_m<1$. 
Then, there exists a constant $C_\mathrm{off}>1$ independent of $\mu,\,\lambda,\,k$ and $N$ 
such that for all $m\in\N$, $m> 2$,  and any $\phi\in\fock_n$
\begin{equ}[e:OffDiagBound]
\sum_{i=1,2}\Big|\langle (\mu-\gensy(1+c_m \cS^N_{m}))^{-1}\genap\phi,\genap\phi\rangle_{\mathrm{off}_i}\Big|
\leq \lambda^2C_\mathrm{off}  \langle(-\gensy)\cS^{N,\oD}_{m+1}\phi,\phi\rangle_{\fock_n} 
\end{equ}
where $\cS^{N,\oD}_{m+1}$ is the operator defined by 
\begin{equ}[e:OffSN]
\cS^{N,\oD}_{m+1}\eqdef \frac{\cN^2}{\tilde c_m z_{m+1}(\cN)}\sigma_{m+1}(\mu-\gensy , z_{m+1}(\cN))
\end{equ}
and $\tilde c_m\eqdef c_m f_{m+1}(\cN)$ if $m$ is odd, while $\tilde c_m\eqdef c_m/f_{m+1}(\cN)$ if $m$ is even. 
If $m=2$,~\eqref{e:OffDiagBound} still holds but with $\cS^{N,\oD}_{m+1}$ replaced by $\cN^2$. 
\end{lemma}

\begin{proof}
%
We begin with the off-diagonal term of the first type, which, by~\eqref{e:OffDiag1} and~\eqref{e:nonlinCoefficient}, is 
\begin{equs}[e:OffDiagFirst]
&n!\frac{\,c_{\oDi}(n)\,\lambda^2}{4\pi^2}\sum_{j_{1:3},k_{3:n}}\frac{c(j_1,j_2)}{|j_1||j_2|}\frac{c(j_1,j_3)}{|j_1||j_3|} \hat\varphi(j_1+j_2,j_3,k_{3:n})\hat\varphi(j_1+j_3,j_2,k_{3:n})\\
&\qquad\qquad\qquad\times \frac{|j_1+j_2||j_1+j_3| \indN{j_1,j_2}\indN{j_2,j_3}}{\mu +\Gamma(j_{1:3},k_{3:n})(1+\tilde c_{m}[\sigma^N_{m}(\mu+\Gamma(j_{1:3},k_{3:n}),z_{m+1})-a_m])}
\end{equs}
where $a_m$ is $0$ if $m$ is odd and $f_{m+1}$ otherwise, and,
as in the proof of Lemmas~\ref{l:UBtoLB} and~\ref{l:LBtoUB}, $\sigma^N_{m}$ is as in~\eqref{e:sigmaN}, 
we adopted the same convention for $\Gamma$ as in~\eqref{e:Conv1}, 
omitted the dependence on $n$ of $z_{m+1}$ since $n$ will be fixed throughout, 
and used~\eqref{e:+1}. 
In the rest of the proof we will omit the subscript of $z$ 
since it will not change. 

In order to control the absolute value of the previous, we first bound
the factors $c(j_1,j_2)/(|j_1||j_2|)$, $c(j_1,j_3)/(|j_1||j_3|)$ by
$1$ in absolute value, neglect $1-\tilde c_m a_m>0$ inside the
parenthesis in the denominator (recall that for $m$ even $c_m<1$) and
denote the product of the indicators by $\indN{j_{1:3}}$.  Further,
define
$\Phi(\ell_{1:n})\eqdef\hat\varphi(\ell_{1:n})\prod_{i=1}^n|\ell_i|$,
so that the sum in~\eqref{e:OffDiagFirst} can be upper bounded by
\begin{equs}
&\sum_{j_{1:3},k_{3:n}}\frac{|\Phi(j_1+j_2,j_3,k_{3:n})||\Phi(j_1+j_3,j_2,k_{3:n})| \indN{j_{1:3}}}{|j_2||j_3|(\mu+\tilde c_m\Gamma(j_{1:3},k_{3:n})\sigma^N_{m}(\mu+\Gamma(j_{1:3},k_{3:n}),z))\prod_{i=3}^n|k_i|^2}\\
&\leq \sum_{j_{1:3},k_{3:n}}\frac{|\Phi(j_1+j_2,j_3,k_{3:n})|^2}{\prod_{i=3}^n|k_i|^2}\frac{\indN{j_{1:2}}}{|j_2||j_3|(\mu+\tilde c_m\Gamma(j_{1:3},k_{3:n}))\sigma^N_{m}(\mu+\Gamma(j_{1:3},k_{3:n}),z))}\\
&=\sum_{k_{1:n}}|\hat\varphi(k_{1:n})|^2|k_1|^2|k_2|\sum_{j_1+j_2=k_1}\frac{\indN{j_{1:2}}}{|j_2|(\mu+\tilde c_m\Gamma(j_{1:2},k_{2:n})\sigma^N_{m}(\mu+\Gamma(j_{1:2},k_{2:n})),z))}
\end{equs}
where in the first passage we estimated the product of the $\Phi_n$'s by half the sum of their squares 
and in the second we renamed the variables ($j_3=k_2$). 
Let us point out that in case $m=2$, in all the inequalities above 
there is no $\sigma_m^N$ in the denominator. 

Concerning the inner sum, 
by~\eqref{e:lm}, $\tfrac18(|j_2|^2+|k_{1:n}|^2)\leq \Gamma(j_{1:2},k_{2:n})\leq 2(|j_2|^2+|k_{1:n}|^2)$, 
and, since $\sigma_m$ is monotonically decreasing in the first variable, we have 
\begin{equation}\label{e:InnerSum}
\begin{aligned}
&\sum_{j_1+j_2=k_1}\frac{\indN{j_{1:2}}}{|j_2|(\mu+\tilde c_m\Gamma(j_{1:2},k_{2:n})\sigma^N_{m}(\mu+\Gamma(j_{1:2},k_{2:n})),z))}\\
&\leq \frac{1}{N}\sum_{|j_2/N|\leq 1}\frac{1}{N^2}\frac{1}{|j_2/N|} \frac{1}{(\mu_N+\frac{\tilde c_m}{8}(|j_2/N|^2+|k_{1:n}/N|^2)\sigma^N_{m}(2|j_{2}/N|^2+4\alpha_N,z))}\\
&\lesssim \frac{1}{N}  \int_0^1 \frac{\dd\rho}{\mu_N+\frac{\tilde c_m}{8}(\rho^2+|k_{1:n}/N|^2)\sigma_{m}(2\rho^2+4\alpha_N,z)}
\end{aligned}
\end{equation}
where the last bound follows by Riemann sum approximation 
and the use of polar coordinates. 

Now, if $m=2$, modulo constants,~\eqref{e:InnerSum} is bounded above by 
\begin{equ}
\frac{1}{N}\int_0^1\frac{\dd\rho}{\rho^2+\alpha_N}\lesssim\frac{1}{N}\frac{1}{\sqrt{\alpha_N}} \leq\frac{1}{\sqrt{|k_{1:n}|^2}}\,.
\end{equ}

For $m>2$, notice that, if $\mu_N\leq |k_{1:n}/N|^2$, then the denominator in the integral can be trivially 
bounded from below by
\begin{equs}[e:BoundDenom]
\mu_N+\frac{\tilde c_m}{8}(\rho^2+|k_{1:n}/N|^2)\sigma_{m}\geq\frac{\tilde c_m}{8}(\rho^2+|k_{1:n}/N|^2)\sigma_{m} \gtrsim \tilde c_m(2\rho^2 + 4\alpha_N)\sigma_{m}
\end{equs}
where we omitted the arguments of $\sigma_m$ since they do not change. Thus, modulo constants,
~\eqref{e:InnerSum} can be bounded from above by 
\begin{equation}\label{e:BoundInnerSum1}
\begin{aligned}
\frac{1 }{\tilde c_mN}\int_0^1& \frac{\dd\rho}{(2\rho^2+4\alpha_N)\sigma_{m}(2\rho^2+4\alpha_N,z)}\leq \frac{1 }{\tilde c_m N} \int_0^{\infty}  \frac{\dd\rho}{(\rho^2+4\alpha_N)\sigma_{m}(\rho^2+4\alpha_N,z)}\\
&\lesssim \frac{1}{\tilde c_mN}\frac{1}{\sqrt{\alpha_N}}\frac{1}{\sigma_m(8 \alpha_N,z)}\lesssim\frac{1}{\tilde c_m}\frac{1}{\sqrt{ |k_{1:n}|^2}}\frac{1}{\sigma^N_m(8(\mu+\half |k_{1:n}|^2),z)}
\end{aligned}
\end{equation}
where we applied Lemma~\ref{l:OffDiag}, which holds since $\sigma_m$ satisfies its assumptions. 

If instead $\mu_N> \alpha_N$, then 
we split the integral in~\eqref{e:InnerSum} according to $\rho>\sqrt{\mu_N}$ and $\rho\leq\sqrt{\mu_N}$. 
In the former case, we first bound the denominator as in~\eqref{e:BoundDenom} and then extend the integral to the 
interval $[0,1]$ so that we can exploit~\eqref{e:BoundInnerSum1}. For the latter, we exploit the monotonicity 
of $\sigma_m$ which ensures that 
\begin{equ}[e:BoundDenom2]
\sigma_{m}(4(\mu_N+\tfrac12(\rho^2+|k_{1:n}/N|^2)),z)\geq\sigma_{m}(8\alpha_N,z)\,.
\end{equ}
Hence,
\begin{equs}
\frac{1}{N}  \int_0^{\sqrt{\mu_N}} &\frac{\dd\rho}{\mu_N+\frac{\tilde c_m}{8}(\rho^2+|k_{1:n}/N|^2)\sigma_{m}(4(\mu_N+\tfrac12(\rho^2+|k_{1:n}/N|^2)),z)}\\
 &\lesssim \frac{1}{\tilde c_mN}\frac{1}{\sigma_m(8\alpha_N,z)} \int_0^{\infty}\frac{\dd\rho}{\rho^2+|k_{1:n}/N|^2}
 \lesssim\frac{1}{\tilde c_m}\frac{1}{\sqrt{|k_{1:n}|^2}}\frac{1}{\sigma_m(8\alpha_N,z)}
\end{equs}
where in the first passage we neglected $\mu_N$ and extended the integral to the positive real line.
Overall, we have shown that there exists a constant $C>0$ such that, for any $m\geq 2$,
~\eqref{e:OffDiagFirst} is bounded above by
\begin{equs}[b:OffDiag1]
\Big|\langle &(\mu-\gensy(1+c_m \cS^N_{m}))^{-1}\genap\phi,\genap\phi\rangle_{\mathrm{off}_i}\Big|\\
&\leq n!\frac{\,c_{\oDi}(n)\,\lambda^2}{4\pi^2}\frac{C}{\tilde c_m}\sum_{k_{1:n}}|\hat\varphi(k_{1:n})|^2|k_1|^2\frac{|k_2|}{\sqrt{|k_{1:n}|^2}}\frac{1}{\sigma^N_m(8(\mu+\half |k_{1:n}|^2),z)}\\
&\qquad\qquad \leq n!  C\sum_{k_{1:n}} |\hat\varphi(k_{1:n})|^2|k_{1:n}|^2\frac{\lambda^2 n}{\tilde c_m \sigma^N_m(8(\mu+\half |k_{1:n}|^2),z)}\,,
\end{equs}
and the $n$ at the numerator follows by $c_{\oDi}(n)=O(n^2)$ (see Lemma~\ref{l:DiagOffDiag}) and by replacing $|k_1|^2$ by $|k_{1:n}|^2$.
\medskip

For the off-diagonal term of second type, we proceed similarly. 
Adopting the same notations as in~\eqref{e:OffDiagFirst},~\eqref{e:OffDiag2} equals
\begin{equs}\label{e:OffDiagSecond}
\frac{n!\,c_{\ooDi}(n)\,\lambda^2}{4\pi^2}&\sum_{j_{1:4},k_{4:n}}\frac{c(j_1,j_2)}{|j_1||j_2|}\frac{c(j_3,j_4)}{|j_3||j_4|} \hat\varphi(j_1+j_2,j_{3:4},k_{4:n})\hat\varphi(j_{1:2},j_3+j_4,k_{4:n})\\
&\qquad\qquad\times \frac{|j_1+j_2||j_3+j_4| \indN{j_1,j_2}\indN{j_3,j_4}}{\mu +\Gamma(j_{1:4},k_{4:n})(1+\tilde c_{m}[\sigma^N_{m}(\mu+\Gamma(j_{1:4},k_{4:n}),z)-a_m])}\,.
\end{equs}
By retracing the same steps as in the proof of the bound on the off-diagonal term of the first type, 
we see that the sum above is upper bounded by
\begin{equs}
&\sum_{k_{1:n}}|\hat\varphi(k_{1:n})|^2|k_1|^2|k_2||k_3|\sum_{j_1+j_2=k_1}\frac{\indN{j_1,j_2}}{|j_1||j_2|(\mu+\tilde c_m\Gamma(j_{1:2},k_{2:n})\sigma^N_{m}(\mu+\Gamma(j_{1:2},k_{2:n}),z))}\\
&\leq 2\sum_{k_{1:n}}|\hat\varphi(k_{1:n})|^2|k_1||k_2||k_3|\sum_{j_1+j_2=k_1}\frac{\indN{j_1,j_2}}{|j_2|(\mu+\tilde c_m\Gamma(j_{1:2},k_{2:n})\sigma^N_{m}(\mu+\Gamma(j_{1:2},k_{2:n}),z))}
\end{equs}
where we used that, since $j_1+j_2=k_1$, 
the modulus of at least one of the two must be bigger than $|k_1|/2$. Hence, we can proceed as in~\eqref{e:InnerSum} 
so that, again, modulo constants, the previous is bounded above by 
\begin{equs}[b:OffDiag2]
\frac{1}{\tilde c_m}& \sum_{k_{1:n}}|\hat\varphi(k_{1:n})|^2\frac{|k_1||k_2||k_3|}{\sqrt{|k_{1:n}|^2}}\frac{1}{\sigma^N_m(8(\mu+\half |k_{1:n}|^2),z)}\\
&\leq \frac{1}{\tilde c_m}\frac{1}{n} \sum_{k_{1:n}}|\hat\varphi(k_{1:n})|^2|k_{1:n}|^2\frac{1}{\sigma^N_m(8(\mu+\half |k_{1:n}|^2),z)}\,.
\end{equs}
Hence, it follows that there exists a constant $C>0$ such that 
\begin{equs}[e:OffDiag2Final]
\Big|\langle &\genam (\mu-\gensy(1+\tilde c_m\cS_m^N))^{-1}\genap\phi,\phi\rangle_\ooDi\Big|\\
&\leq \frac{n!\,c_{\ooDi}(n)\,\lambda^2}{4\pi^2n} \frac{C}{\tilde c_m}\sum_{k_{1:n}} |\hat\varphi(k_{1:n})|^2|k_{1:n}|^2\frac{1}{\sigma^N_m(8(\mu+\half|k_{1:n}|^2),z)}\\
&\leq n!\, C \sum_{k_{1:n}} |\hat\varphi(k_{1:n})|^2|k_{1:n}|^2\frac{\lambda^2 n^2}{\tilde c_m \sigma^N_m(8(\mu+\half |k_{1:n}|^2),z)}
\end{equs}
and, again, the $n^2$ comes from $c_{\ooDi}(n)=O(n^3)$ (see Lemma~\ref{l:DiagOffDiag}). 
\medskip

By~\eqref{b:OffDiag1} and~\eqref{e:OffDiag2Final}, we obtained
\begin{equs}[e:OffDiagBound]
\sum_{i=1,2}\Big|\langle (\mu-\gensy&(1+c_m \cS^N_{m}))^{-1}\genap\phi,\genap\phi\rangle_{\mathrm{off}_i}\Big|\\
&\leq n!\, C \sum_{k_{1:n}} |\hat\varphi(k_{1:n})|^2|k_{1:n}|^2\frac{\lambda^2 n^2}{\tilde c_m \sigma^N_m(8(\mu+\half |k_{1:n}|^2),z_{m+1}(n))}
\end{equs}
for some constant $C>0$ independent of $m, N, \mu$. 
Now, to conclude the proof it suffices to control the fraction inside the sum. 
If $m$ is even by~\eqref{e:sigmaN}, we have 
\begin{equs}
&\frac{\lambda^2 n^2}{\tilde c_m\sigma^N_m(8(\mu+\half |k_{1:n}|^2),z_{m+1}(n))}=\frac{\lambda^2 n^2}{\tilde c_m\LBN_{\frac{m}{2}-1}(8(\mu+\half |k_{1:n}|^2)\vee 1,z_{m+1}(n))}\\
&=\frac{n^2\lambda^2}{\tilde c_m\LlN(8(\mu+\frac{1}{2}|k_{1:n}|^2)\vee 1,z_{m+1}(n))}\UBN_{\frac{m}{2}-1}(8(\mu+\tfrac{1}{2}|k_{1:n}|^2)\vee 1,z_{m+1}(n))\\
&\leq \frac{n^2}{\tilde c_m z_{m+1}(n)}\UBN_{\frac{m}{2}-1}(\mu+\tfrac{1}{2}|k_{1:n}|^2,z_{m+1}(n))=\frac{n^2}{\tilde c_m z_{m+1}(n)}\sigma_{m+1}(\mu+\tfrac{1}{2}|k_{1:n}|^2,z_{m+1}(n))\,,
\end{equs}
while il $m$ is odd 
\begin{equs}
&\frac{\lambda^2 n^2}{\tilde c_m\sigma^N_m(8(\mu+\half |k_{1:n}|^2),z_{m+1}(n))}=\frac{\lambda^2 n^2}{\tilde c_m\UBN_{\frac{m-3}{2}}(8(\mu+\half |k_{1:n}|^2),z_{m+1}(n))}\\
&\leq \frac{\lambda^2 n^2}{\tilde c_m\UBN_{\frac{m-3}{2}}(8(\mu+\half |k_{1:n}|^2)\vee 1,z_{m+1}(n))}\\
&=\frac{n^2\lambda^2}{\tilde c_m\Ll^N(8((\mu+\frac{1}{2}|k_{1:n}|^2)\vee 1),z_{m+1}(n))}\LBN_{\frac{m-3}{2}}(8((\mu+\tfrac{1}{2}|k_{1:n}|^2)\vee 1),z_{m+1}(n))\\
&\leq \frac{n^2 \LBN_{\frac{m-1}{2}}((\mu+\tfrac{1}{2}|k_{1:n}|^2)\vee 1,z_{m+1}(n))}{\tilde c_m z_{m+1}(n)}=\frac{n^2}{\tilde c_m z_{m+1}(n)}\sigma_{m+1}(\mu+\tfrac{1}{2}|k_{1:n}|^2,z_{m+1}(n))
\end{equs}
where, in both cases, we exploited the definition of $\UBN,\LBN,\LlN$ in~\eqref{e:LUBk},~\eqref{e:L} and their monotonicity 
properties in Lemma~\ref{l:MainIntegrals}. 
\end{proof}

We have now all the ingredients we need for the proof of Theorem~\ref{thm:Main}. 

\begin{proof}[of Theorem~\ref{thm:Main}]
As mentioned above, the proof goes by induction. Since by definition~\eqref{def:OpH} $\Op_2\equiv 0\equiv \cS^N_2$, 
~\eqref{e:UltimateLB1} clearly holds for $k=0$ with an arbitrary $c_2^-$ that we can pick to be equal to $\tfrac12$. 
Now, we show that if~\eqref{e:UltimateUB1} holds for $2k+1$, then~\eqref{e:UltimateLB1} holds for $2k+2$ 
with $c_{2k+2}^-$ as in~\eqref{e:Constants}.  
Using the definition of $\Op_{2k+2}$ in~\eqref{def:OpH}, the inductive hypothesis~\eqref{e:UltimateUB1} 
and~\eqref{e:AByau}, we have 
\begin{equ}
\Op_{2k+2}\geq -\genam(\mu-\gensy(1+c^+_{2k+1}\cS^N_{2k+1}))^{-1}\genap\,.
\end{equ}
Let $\phi=(\phi_n)_{n\in\N}$. Thanks to the decomposition of Lemma~\ref{l:DiagOffDiag}, we see that 
for all $n\in\N$ we have 
\begin{equs}[e:SemiFinalUBLB]
\langle \Op_{2k+2} \phi_n,\phi_n\rangle_{\fock_n}&\geq 
\langle \big(\mu -\gensy\left(1+c^+_{2k+1}\cS_{2k+1}^N\right)\big)^{-1}\genap \phi_n,\genap\phi_n\rangle_{\Gamma L^2_{n+1}}\\
&\geq \langle \left(\mu -\gensy\left(1+c^+_{2k+1}\cS_{2k+1}^N\right)\right)^{-1}\genap \phi_n,\genap\phi_n\rangle_{\Di}\\
&\qquad -\sum_{i=1,2}\Big|\langle  (\mu-\gensy(1+c^+_{2k+1} \cS^N_{2k+1}))^{-1}\genap\phi_n,\genap\phi_n\rangle_{\mathrm{off}_i}\Big|\,.
\end{equs}
We are now in a position to apply Lemma~\ref{l:UBtoLB} for the diagonal term and Lemma~\eqref{l:OffDiagBound} 
to the off-diagonal, which together give the lower bound for~\eqref{e:SemiFinalUBLB} 
\begin{equs}
\frac{1}{\pi c^+_{2k+1}(1+\frac{1}{f_{2k+2}(1)})}\Big\langle (-\gensy)\Big[\tilde\cS^N_{2k+2}-\lambda^2 C_{\oD}\pi c^+_{2k+1}(1+\frac{1}{f_{2k+2}(1)}) \cS^{N,\oD}_{2k+2} \Big] \phi_n,\phi_n\Big\rangle_{\fock_n}
\end{equs}
where $\tilde\cS^N_{2k+2}$ and $\cS^{N,\oD}_{2k+2}$ are respectively defined in~\eqref{e:tildeSN} and~\eqref{e:OffSN}. 
It remains to look at the difference of the operators in brackets, which is 
\begin{equs}
&\tilde\cS^N_{2k+2}-\lambda^2 C_{\oD}\pi c^+_{2k+1}(1+\frac{1}{f_{2k+2}(1)}) \cS^{N,\oD}_{2k+2} \\
&=\frac{1}{f_{2k+2}(\cN)}\Big[\sigma^N_{2k+2}(\mu-\gensy, z_{2k+2}(\cN))\Big(1-\frac{4\pi \lambda C_{\Di}}{\sqrt{z_{2k+2}(1)}}-\frac{\lambda^2 C_{\oD}\pi (1+\frac{1}{f_{2k+2}(1)})\cN^2}{z_{2k+2}(\cN)}\Big)\\
&\qquad\qquad\qquad\qquad\qquad\qquad\qquad\qquad\qquad\qquad\qquad\qquad\qquad\qquad-\frac{f_{2k+2}(\cN)}{2}\Big]
\end{equs}
We now choose $K_1$ in~\eqref{e:Functions} big enough so that 
\begin{equs}[e:CondonK]
&\frac{4\pi \lambda C_{\Di}}{\sqrt{z_{2k+2}(1)}}+\frac{\lambda^2\pi(1+\frac{1}{f_{2k+2}(1)}) C_{\oD} \cN^2}{z_{2k+2}(\cN)}\\
&\qquad=\frac{4\pi \lambda C_{\Di}}{\sqrt{K_1(\lambda^2\vee 1)}(2k+3)^{\frac32 +\delta}}+\frac{\pi(1+\frac{1}{f_{2k+2}(1)}) C_{\oD} \cN^2}{K_1(\lambda^2\vee 1)(\cN+2k+2)^{3+2\delta}}\\
&\qquad\leq \frac{4\pi C_{\Di}}{\sqrt{K_1}}k^{-\frac32 -\delta}+\frac{2\pi C_{\oD}}{K_1}k^{-1-2\delta}\leq \frac{1}{2 k^{1+\delta}}\,.
\end{equs} 
where we recalled that the operator $\cN$ takes values in $\N$, so that in particular it is positive.  
Since the right hand side of~\eqref{e:CondonK} is smaller than 
$1/2$, by defining the constant $c^-_{2k+2}$ according to~\eqref{e:Constants} 
it is immediate to see that~\eqref{e:SemiFinalUBLB} is lower bounded by 
\begin{equs}
&\frac{1}{\pi c^+_{2k+1}(1+\frac{1}{f_{2k+2}(1)})}\Big\langle (-\gensy)\frac{1}{f_{2k+2}(\cN)}\Big[\sigma^N_{2k+2}(\mu-\gensy, z_{2k+2}(\cN))\Big(1-\frac{1}{2k^{1+\delta}}\Big)\\
&\qquad\qquad\qquad\qquad\qquad\qquad\qquad\qquad\qquad\qquad\qquad\qquad-\frac{f_{2k+2}(\cN)}{2}\Big] \phi_n,\phi_n\Big\rangle_{\fock_n}\\
&\geq c^-_{2k+2}\langle (-\gensy)\frac{1}{f_{2k+2}(\cN)}\Big[\sigma^N_{2k+2}(\mu-\gensy, z_{2k+2}(\cN))-f_{2k+2}(\cN)\Big] \phi_n,\phi_n\Big\rangle_{\fock_n}\\
&=c^-_{2k+2}\langle (-\gensy)\cS_{2k+2} \phi_n,\phi_n\Big\rangle_{\fock_n}\,,
\end{equs}
which is what we wanted to show. 

We now assume~\eqref{e:UltimateLB1} and prove that~\eqref{e:UltimateUB1} holds for $2k+3$. 
Exploiting~\eqref{e:AByau}, the above and the decomposition in diagonal and off diagonal parts in 
Lemma~\ref{l:DiagOffDiag}, for $\phi=(\phi_n)_{n\in\N}\in\fock$ and any $n\in\N$, we have 
\begin{equs}[e:SemiFinalLBUB]
\langle \Op_{2k+3} \phi_n,\phi_n\rangle_{\fock_n}&\leq 
\langle \big(\mu -\gensy\left(1+c^-_{2k+2}\cS_{2k+2}^N\right)\big)^{-1}\genap \phi_n,\genap\phi_n\rangle_{\Gamma L^2_{n+1}}\\
&\leq \langle \left(\mu -\gensy\left(1+c^-_{2k+2}\cS_{2k+2}^N\right)\right)^{-1}\genap \phi_n,\genap\phi_n\rangle_{\Di}\\
&\qquad +\sum_{i=1,2}\Big|\langle  (\mu-\gensy(1+c^-_{2k+2} \cS^N_{2k+2}))^{-1}\genap\phi_n,\genap\phi_n\rangle_{\mathrm{off}_i}\Big|\,.
\end{equs}
for which Lemmas~\ref{l:LBtoUB} and~\ref{l:OffDiagBound} provide an upper bound of the form
\begin{equs}
 \frac{1}{\pi c^-_{2k+2}}\Big\langle (-\gensy)\Big[\Big(1+\frac{4\lambda\pi C_{\Di}}{\sqrt{z_{2k+2}(1)}}\Big)\cS^N_{2k+3}+\lambda^2 C_{\oD}\pi c^-_{2k+2} \cS^{N,\oD}_{2k+3} \Big] \phi_n,\phi_n\Big\rangle_{\fock_n}\,.
\end{equs}
Note that, by~\eqref{e:OffSN} and~\eqref{e:SN}, we have 
\begin{equ}
\cS^{N,\oD}_{2k+3}=\frac{1}{c^-_{2k+2}}\frac{\cN^2}{z_{2k+2}(\cN)}\cS^N_{2k+3}
\end{equ}
which, together with the equation above, implies that~\eqref{e:SemiFinalLBUB} is upper bounded by 
\begin{equs}
 \frac{1}{\pi c^-_{2k+2}}\Big\langle (-\gensy)\Big(1+\frac{4\lambda\pi C_{\Di}}{\sqrt{z_{2k+2}(1)}}+\frac{\lambda^2 C_{\oD}\cN^2}{z_{2k+2}(\cN)}\Big)\cS^N_{2k+3} &\phi_n,\phi_n\Big\rangle_{\fock_n}\\
 &\leq c_{2k+3}^+\langle (-\gensy)\cS_{2k+3} \phi_n,\phi_n\Big\rangle_{\fock_n}
\end{equs}
where in the last step we exploited both~\eqref{e:CondonK} and the definition of $c_{2k+3}^+$ in~\eqref{e:Constants}, 
and the proof is completed. 
\end{proof}

\subsection{The operator $-\genap(\mu-\gensy)^{-1}\genam$}

We now come to the other operator we need to estimate in order to
control \eqref{e:Final}, namely $-\genap(\mu-\gensy)^{-1}\genam$. 
In view of \eqref{e:AByau} we need an
upper bound on this operator (as lower bound we will simply use that
$-\genap(\mu-\gensy)^{-1}\genam$ is positive). Note, however, that we
only need to analyse its action on elements of the second Wiener
chaos. This is because, in \eqref{e:Final}, $ \nf_\phi$ belongs to the
second chaos, and $\mathcal L_0$ and $\Op_n$ (and also $-\genap(\mu-\gensy)^{-1}\genam$) leave the order of the
chaos unchanged.  We have the following lemma.

\begin{lemma}\label{l:H3'} 
There exists a constant $c>0$ such that for any $\phi\in \fock_2$ and any function $G:\R_+\mapsto [1,\infty)$, 
\begin{equs}\label{e:H3'}
  -\langle \genap&(\mu-\gensy)^{-1}\genam\phi,\phi\rangle_{\Gamma L^2_2}
  &\leq c\lambda^2 \langle (-\gensy) \cS^{+-}\phi,\phi\rangle\,,
\end{equs}
where the operator $\cS^{+-}$ acts in Fourier space on $\phi\in \fock_2$ as
\begin{equ}
\cF(\cS^{+-}\phi)(\ell,m)= J^N_{\ell,m} g(\ell+m) G(\mu+\frac12(|\ell|^2+|m|^2))\hat{\phi}(\ell,m)\,,
	\end{equ}
where
\begin{equ}[e:fN]
g(k)\eqdef \frac{|k|^2}{\mu+\tfrac12 |k|^2}\sum_{\ell+m=k}\frac{\mathbb J^N_{\ell,m}}{\tfrac12(|\ell|^2+|m|^2)G(\mu+\frac12(|\ell|^2+|m|^2))}\,.
\end{equ}
\end{lemma}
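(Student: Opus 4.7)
The plan is to reduce the left-hand side to $\sum_k|\widehat{\genam\phi}(k)|^2/(\mu+\tfrac12|k|^2)$ and then to estimate each summand by a Cauchy-Schwarz inequality, after symmetrising the kernel using the $(\ell,m)$-symmetry of $\hat\phi$. By Lemma~\ref{lem:generator}, $-\genap$ is the adjoint of $\genam$ and $(\mu-\gensy)^{-1}$ acts on $\Gamma L^2_1$ as the Fourier multiplier $(\mu+\tfrac12|k|^2)^{-1}$, so that
\begin{equ}
-\langle \genap(\mu-\gensy)^{-1}\genam\phi,\phi\rangle_{\Gamma L^2_2}=\langle(\mu-\gensy)^{-1}\genam\phi,\genam\phi\rangle_{\Gamma L^2_1}=\sum_{k\in\Z^2\setminus\{0\}}\frac{|\widehat{\genam\phi}(k)|^2}{\mu+\tfrac12|k|^2}\,.
\end{equ}
From~\eqref{e:genam:fock} with $n=2$ one has $\widehat{\genam\phi}(k)=4\lambda\sum_{\ell+m=k}|m|\nonlin_{k,-\ell}\hat\phi(\ell,m)$, and the symmetry $\hat\phi(\ell,m)=\hat\phi(m,\ell)$ annihilates the $\ell\leftrightarrow m$ antisymmetric part of the kernel, producing
\begin{equ}
\widehat{\genam\phi}(k)=2\lambda\sum_{\ell+m=k}K_k(\ell,m)\hat\phi(\ell,m)\indN{\ell,m}\,,\qquad K_k(\ell,m)\eqdef|m|\nonlin_{k,-\ell}+|\ell|\nonlin_{k,-m}\,.
\end{equ}

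The technical heart of the argument is the pointwise estimate
\begin{equ}\label{e:KbdPlan}
|K_k(\ell,m)|\le\tfrac{3|k|}{2\pi}\qquad\text{for all $\ell,m$ with $\ell+m=k$ and $\indN{\ell,m}=1$.}
\end{equ}
This is the essential improvement over the naive bound $|c(k,\cdot)|\le|k|\cdot|\cdot|$ applied term-by-term to $K_k$, which gives only $|K_k|\lesssim|\ell|+|m|$ and is too crude in the delicate regime $|\ell|,|m|\gg|k|$. The improvement rests on two identities valid whenever $\ell+m=k$: the bilinearity relation $c(k,\ell)+c(k,m)=c(k,k)$, and the geometric one $|m|^2-|\ell|^2=k\cdot(m-\ell)$. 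Substituting $c(k,m)=c(k,k)-c(k,\ell)$ leads to the representation
\begin{equ}
K_k(\ell,m)=\frac{-1}{2\pi|k||\ell||m|}\bigl[c(k,\ell)(|m|^2-|\ell|^2)+|\ell|^2\,c(k,k)\bigr]\,,
\end{equ}
and bounding each factor via $|c(k,\ell)|\le|k||\ell|$, $||m|^2-|\ell|^2|\le|k|(|\ell|+|m|)$ and $|c(k,k)|\le|k|^2$ yields $|K_k(\ell,m)|\le|k|(2|\ell|+|m|)/(2\pi|m|)$. The symmetric manipulation (swapping the roles of $\ell$ and $m$) gives $|K_k(\ell,m)|\le|k|(|\ell|+2|m|)/(2\pi|\ell|)$; taking the minimum of the two estimates and distinguishing the cases $|\ell|\le|m|$ and $|m|\le|\ell|$, the parenthesis is always bounded by $3$, which proves \eqref{e:KbdPlan}.

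With \eqref{e:KbdPlan} in hand, Cauchy-Schwarz applied to the symmetrised representation of $\widehat{\genam\phi}(k)$ with weight $\Sigma(\ell,m)\eqdef\tfrac12(|\ell|^2+|m|^2)G(\mu+\tfrac12(|\ell|^2+|m|^2))$ produces
\begin{equ}
|\widehat{\genam\phi}(k)|^2\le 4\lambda^2\Bigl(\sum_{\ell+m=k}\frac{|K_k(\ell,m)|^2\indN{\ell,m}}{\Sigma(\ell,m)}\Bigr)\Bigl(\sum_{\ell+m=k}\Sigma(\ell,m)|\hat\phi(\ell,m)|^2\indN{\ell,m}\Bigr)\,,
\end{equ}
and the uniform bound \eqref{e:KbdPlan} replaces the first bracket by $C|k|^2\sum_{\ell+m=k}\indN{\ell,m}/\Sigma(\ell,m)$. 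Dividing by $\mu+\tfrac12|k|^2$, the factor $\frac{|k|^2}{\mu+\tfrac12|k|^2}\sum_{\ell+m=k}\indN{\ell,m}/\Sigma(\ell,m)$ is exactly $g(|k|)$ by~\eqref{e:fN}; summing over $k$ and exchanging the order of summation (writing $\sum_k\sum_{\ell+m=k}=\sum_{\ell,m}$ with $k=\ell+m$) then delivers the right-hand side of~\eqref{e:H3'}.
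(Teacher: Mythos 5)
Your proof is correct and follows essentially the same route as the paper's: reduce the left-hand side to $\sum_k|\cF(\genam\phi)(k)|^2/(\mu+\tfrac12|k|^2)$, symmetrise the kernel in $(\ell,m)$ using the bilinearity identities $c(k,m)=c(k,k)-c(k,\ell)$ and $|m|^2-|\ell|^2=k\cdot(m-\ell)$ to get a kernel of size $O(|k|)$, and conclude by Cauchy--Schwarz with the weight $\tfrac12(|\ell|^2+|m|^2)G(\mu+\tfrac12(|\ell|^2+|m|^2))$, which produces exactly $g(|\ell+m|)$ after reindexing. The only difference is organisational: you derive the uniform bound $|K_k(\ell,m)|\le 3|k|/(2\pi)$ by taking the minimum of the two symmetric estimates, whereas the paper splits into the regimes $|\ell|\le 2|k|$ (where the crude bound $|m|\le 3|k|$ suffices) and $|\ell|>2|k|$ (where the same symmetrisation algebra is applied); the substance and the conclusion are identical.
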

\begin{proof}
Notice that, by Lemma \ref{lem:generator},
\begin{equs}[e:Op]
\langle (\mu-\gensy)^{-1}&\genam\phi_2,\genam\phi_2\rangle_{\Gamma L^2_2}\\
&=\frac{4\lambda^2}{\pi^2}\sum_{k}\frac{1}{\mu +\tfrac12|k|^2}\left(\sum_{\ell+m=k}|m|\frac{c(k,-\ell)}{|\ell||k|}\mathbb J^N_{\ell,m}\hat \phi(\ell,m)\right)^2.  
\end{equs}
We begin by analysing the inner sum and we treat differently the small and the large values of $\ell$. 
For lightness of notation, we write
\begin{equ}
  \sum'_{\ell+m=k}\dots\eqdef\sum_{\ell+m=k}\mathbb J^N_{\ell,m}\dots.
\end{equ}

We consider first the case $|\ell|\leq 2|k|$. Note that  $|m|=|\ell-k|\leq3|k|$, hence
\begin{equs}[e:ellsmall]
		\Big|\sum'_{\substack{\ell+m=k\\|\ell|\leq 2|k|}}|m|\frac{c(k,-\ell)}{|\ell||k|}\hat \phi(\ell,m)\Big|\leq
		 3|k|\sum'_{\substack{\ell+m=k\\|\ell|\leq 2|k|}}|\hat \phi(\ell,m)|\,.
\end{equs}
To continue, we multiply and divide by $(\tfrac12(|\ell|^2+|m|^2))^{\tfrac12}G(\mu+\tfrac12(|\ell|^2+|m|^2))^{\tfrac12}$
and apply the Cauchy-Schwarz inequality. This readily gives the desired contribution. 

Next we consider the case $|\ell| > 2|k|$. 
Since $\phi$ is symmetric, so is $\Phi(\ell,m)\eqdef |\ell||m|\hat\phi(\ell,m)$, hence, the summand in the 
inner sum at the right hand side of~\eqref{e:Op} can be rewritten as 
\begin{equ}
\sum_{\substack{\ell+m=k\\ |\ell|>2|k|}} \frac{1}{|k|}
	\frac{c(k,-\ell)}{|\ell|^2}\Phi(\ell,k-\ell)
	=\sum_{\substack{\ell+m=k\\ |\ell|>2|k|}}\frac{1}{2|k|}
	\left(\frac{c(k,-\ell)}{|\ell|^2}+\frac{c(k,\ell-k)}{|k-\ell|^2}\right)\Phi(\ell,k-\ell)\,.
\end{equ}
A direct computation using the definition of $c(k,\ell)$ shows that the summand equals
\begin{equ}
	-\frac{c(k,k)}{2|k|}
        \frac{|\ell|}{|k-\ell|}\hat\phi(\ell,k-\ell)
        +\frac{1}{2|k|}
        c(k,\ell)\left(\frac{1}{|k-\ell|^2}-\frac{1}{|\ell|^2}\right)\Phi(\ell,k-\ell)\,.
\end{equ}
Since $|\ell| >2|k|$, $3|\ell|/2\geq |k-\ell|\geq |\ell|/2$.
Therefore, 
\begin{equ}\label{eq:inner1}
	\Big|\frac{c(k,k)}{2|k|}\sum'_{\substack{\ell+m=k\\|\ell|> 2|k|}}\frac{|\ell|}{|k-\ell|}\hat\phi(\ell,k-\ell)\Big|\leq |k|\sum'_{\ell+m=k, |\ell|\ge 2|k|}|\hat\phi(\ell,k-\ell)|
\end{equ}
that can be estimated as~\eqref{e:ellsmall}.
To estimate the second summand above we note that, since $|c(k,\ell)|\leq|k||\ell|$, we have 
\begin{equ}
|c(k,\ell)||k-\ell||\ell|\Big|\frac{1}{|k-\ell|^2}-\frac{1}{|\ell|^2}\Big|
\leq |k|\frac{\big||\ell|^2-|k-\ell|^2\big|}{|k-\ell|}\leq \frac{|k|^2|2\ell-k|}{|k-\ell|}\lesssim|k|^2\,.
\end{equ}
Thus, 
\begin{equ}\label{eq:inner2}
	\Big|\frac{1}{2|k|}\sum'_{\substack{\ell+m=k\\|\ell|> 2|k|}}c(k,\ell)\left(\frac{1}{|k-\ell|^2}-\frac{1}{|\ell|^2}\right)\Phi(\ell,k-\ell)\Big|\lesssim |k|\sum'_{\substack{\ell+m=k\\|\ell|> 2|k|}}|\hat\phi(\ell,k-\ell)|\,,
\end{equ}
which once again can be bounded as~\eqref{e:ellsmall}. Hence, the result follows.
\end{proof}

In view of Theorem~\ref{thm:Main} and the definition of the operators $\{\cS^N_{2k+1}\}_k$ in~\eqref{e:SN}, 
a special role will be played by the case in which the function $G$ is chosen to 
depend on $k\in\N$ and is of the form $G(x)\eqdef \UBN_{k-1}(x,z_{2k+1}(2))$. 

\begin{lemma}\label{l:g}
In the setting of Lemma~\ref{l:H3'}, choose $G$ as $G(x)\eqdef \UBN_{k-1}(x,z_{2k+1}(2))$. Then, 
there exists a constant $c_0>0$ independent of $\mu,\,k$ and $N$ for which 
\begin{equ}
  \label{e:lg}
g(j)\leq c_0\frac{|j|^2}{\mu+\tfrac12|j|^2}\times
\begin{cases}
\frac{\log(\mu/|j|^2)}{\UBN_{k-1}(8\mu, z_{2k+1}(2))}+\frac{1+\mu_N}{\lambda^2} \LBN_k(8\mu,z_{2k+1}(2))\,,& \text{if $|j|^2\leq \mu$}\\
\frac{1+\mu_N}{\lambda^2} \LBN_k(4(\mu +\tfrac12|j|^2),z_{2k+1}(2))\,, & \text{if $|j|^2> \mu$.}
\end{cases}
\end{equ}
\end{lemma}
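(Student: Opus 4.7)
The plan is to reduce the sum defining $g(|j|)$ to an integral by Riemann-sum approximation and then split the integration domain according to the size of $|\ell|$ relative to $|j|$ and $\sqrt{\mu}$. After factoring out $|j|^2/(\mu+\tfrac12|j|^2)$ and passing to rescaled variables $\tilde\ell=\ell/N$, $\tilde j=j/N$, $\mu_N=\mu/N^2$, Lemma~\ref{l:Riemann} (together with the constraint from $\indN{\ell,m}$ that $|\tilde\ell|,|\tilde j-\tilde\ell|\le 1$) reduces the inner sum to the integral
\begin{equ}
I(\tilde j)\eqdef \int_{|\tilde\ell|,|\tilde j-\tilde\ell|\le 1}\frac{d\tilde\ell}{\tfrac12(|\tilde\ell|^2+|\tilde j-\tilde\ell|^2)\,\UB_{k-1}\!\bigl(\mu_N+\tfrac12(|\tilde\ell|^2+|\tilde j-\tilde\ell|^2),z\bigr)}\,,
\end{equ}
with $z=z_{2k+1}(2)$. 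Throughout we use the parallelogram-style inequality $|\tilde\ell|^2+|\tilde j-\tilde\ell|^2\ge \tfrac12|\tilde j|^2$ and that $\UB_{k-1}(\cdot,z)$ is decreasing.

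Consider first the case $|j|^2>\mu$, i.e.\ $|\tilde j|^2>\mu_N$. Then $v\eqdef\mu_N+\tfrac12(|\tilde\ell|^2+|\tilde j-\tilde\ell|^2)\ge 2\mu_N$ on the entire integration domain, and $v$ is bounded above by $O(1+\mu_N)$. Passing to polar coordinates and then changing variables to $v$, I would use the elementary inequality
\begin{equ}
\frac1{v-\mu_N}\;\le\;C\,\frac{1+\mu_N}{v+v^2}\qquad\text{whenever }v\ge 2\mu_N,\ v\le 2+\mu_N,
\end{equ}
(which is where the $(1+\mu_N)$ prefactor in \eqref{e:lg} originates), and then invoke the key identity \eqref{e:IntLBtoUB}
\begin{equ}
\int_a^b\frac{dv}{(v+v^2)\UB_{k-1}(v,z)}=\frac{2}{\lambda^2}\bigl[\LB_k(a,z)-\LB_k(b,z)\bigr]
\end{equ}
with $a\sim |\tilde j|^2\sim 4(\mu+\tfrac12|j|^2)/N^2$. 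Undoing the rescaling via \eqref{e:scaled} yields the second line of \eqref{e:lg}.

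In the case $|j|^2\le \mu$, I would split the $\tilde\ell$-domain into three annuli. On $\{|\tilde\ell|\le|\tilde j|\}$, the integrand is pointwise bounded by $C/(|\tilde j|^2\UB_{k-1}(8\mu_N,z))$ (using the lower bound on $|\tilde\ell|^2+|\tilde j-\tilde\ell|^2$ and the monotonicity of $\UB_{k-1}$), and integrating over a disk of area $O(|\tilde j|^2)$ gives a contribution $O(1/\UB_{k-1}(8\mu_N,z))$, which is absorbed in the logarithmic term. On the middle annulus $\{|\tilde j|\le|\tilde\ell|\le\sqrt{\mu_N}\}$, the $\UB_{k-1}$ factor is essentially frozen at $\UB_{k-1}(8\mu_N,z)$, and in polar coordinates the remaining integral $\int_{|\tilde j|}^{\sqrt{\mu_N}}d\rho/\rho$ produces precisely $\tfrac12\log(\mu/|j|^2)$. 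On the outer annulus $\{|\tilde\ell|\ge\sqrt{\mu_N}\}$, the argument of the previous paragraph applies verbatim (now with $a\sim\mu_N$), producing the $(1+\mu_N)\lambda^{-2}\LB^N_k(8\mu,z)$ contribution.

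The main technical obstacle is the bookkeeping needed to match the arguments of $\UB_{k-1}$ and $\LB_k$ to the specific values $8\mu$ and $4(\mu+\tfrac12|j|^2)$ appearing in the statement. This rests on the fact that $\LB_k$ and $\UB_{k-1}$ change only by an $O(1)$ multiplicative factor under $O(1)$ rescalings of their first argument, since $\log\Ll(cx,z)=\log\Ll(x,z)+O(1)$ uniformly in small $x$ and bounded $c>0$, so that the precise numerical constants inside the arguments can be freely adjusted at the price of enlarging $c_0$. Once this slowly-varying property is invoked, the three regional estimates combine to give \eqref{e:lg} uniformly in $k$, $\mu$ and $N$.
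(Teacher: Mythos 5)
Your proposal is correct and follows essentially the same route as the paper's proof: Riemann-sum reduction to a radial integral, the case split at $|j|^2\lessgtr\mu$, freezing $\UB_{k-1}$ on the inner range to produce the logarithm, and the exact primitive relation between $1/((x^2+x)\UB_{k-1})$ and $\LB_k$ (which is \eqref{e:IntUBtoLB}, not \eqref{e:IntLBtoUB} as you cite) to produce the $\frac{1+\mu_N}{\lambda^2}\LB_k$ terms. The only differences are cosmetic (your three-annuli split versus the paper's use of Lemma~\ref{l:lm} and a one-dimensional integral started at $|j/N|^2$), apart from one small slip: when $|j|^2$ is close to $\mu$ the factor $\log(\mu/|j|^2)$ nearly vanishes, so the inner-disk contribution $O(1/\UBN_{k-1}(8\mu,z))$ must be absorbed into the $\LB_k$ term, e.g.\ via $1/\UB_{k-1}=\LB_{k-1}/\Ll\le \LB_k/\lambda^2$ (using $z\ge1$), rather than into the logarithmic one.
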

\begin{proof}
Note that with our choice of $G$ it is enough to estimate
\begin{equs}
\sum_{\ell+m=j}\frac{\mathbb J^N_{\ell,m}}{(|\ell|^2+|m|^2)\UBN_{k-1}(\mu+\frac12(|\ell|^2+|m|^2), z_{2k+1}(2))}\,.
\end{equs}
Thanks to~\eqref{e:lm}, and by an immediate extension of Lemma~\ref{lemma:RiemannNew},
the previous is upper bounded by 
\begin{equs}
\sum_{1/N\leq |\ell/N|\leq 1}&\frac{1}{N^2}\frac{1}{\tfrac14(|\ell/N|^2+|j/N|^2)\UB_{k-1}(4(\mu_N+\frac12(|\ell/N|^2+|j/N|^2), z_{2k+1}(2))}\\
&\lesssim 
\int_{0}^1\frac{\rho\,\dd\rho}{(\rho^2+|j/N|^2)\UB_{k-1}(4(\mu_N+\frac12(\rho^2+|j/N|^2)), z_{2k+2}(2))}\\
&\lesssim 
\int_{|j/N|^2}^{1}\frac{\dd\rho}{\rho\UB_{k-1}(4(\mu_N+\rho), z_{2k+1}(2))}
\end{equs}
where in the last line we enlarged the integration interval by using
that $|j/N|\leq 1$, which holds for all values of $j$ appearing above, because by definition ${\mathbb J^N_{\ell,m}}$ is zero
if $|j|=|\ell+m|>N$. We now distinguish two cases, depending on the
relation between $\mu$ and $|j|^2$. If $|j|^2\leq\mu$, then we
split the integral as
\begin{equ}
  \Big(\int_{|j/N|^2}^{\mu_N}+\int_{\mu_N}^{1}\Big)\frac{\dd\rho}{\rho\UB_{k-1}(4(\mu_N+\rho),
    z_{2k+1}(2))}=: I_1+I_2\,.
\end{equ}
For $I_1$ we exploit the fact that $\UB$ is decreasing, so that 
\begin{equ}
I_1\leq \frac{1}{\UB_{k-1}(8\mu_N, z_{2k+1}(2))}\int_{|j/N|^2}^{\mu_N}\frac{\dd\rho}{\rho}=\frac{\log(\mu/|j|^2)}{\UBN_{k-1}(8\mu, z_{2k+1}(2))}.
\end{equ}
For the other integral we have 
\begin{equs}
I_2&\lesssim \int_{\mu_N}^{1}\frac{\dd\rho}{4(\mu_N+\rho)\UB_{k-1}(4(\mu_N+\rho), z_{2k+1}(2))}\\
&\lesssim (1+\mu_N)\int_{8\mu_N}^{4(1+\mu_N)}\frac{\dd\rho}{(\rho^2+\rho)\UB_{k-1}(\rho, z_{2k+1}(2))}\lesssim
\frac{1+\mu_N}{\lambda^2} \LBN_k(8\mu,z_{2k+1}(2))\,,
\end{equs}
where we used \eqref{e:IntUBtoLB}.
If instead $|j/N|^2>\mu_N$, then, proceeding as in the bound for $I_2$ we get 
\begin{equs}
\int_{|j/N|^2}^{1}\frac{\dd\rho}{\rho\UB_{k-1}(4(\mu_N+\rho), z_{2k+1}(2))}&\lesssim \int_{|j/N|^2}^{1}\frac{\dd\rho}{4(\mu_N+\rho)\UB_{k-1}(4(\mu_N+\rho), z_{2k+1}(2))}\\
&\lesssim \frac{1+\mu_N}{\lambda^2} \LBN_k(4(\mu +\tfrac12|j|^2),z_{2k+1}(2))\,,
\end{equs}
and the statement follows. 
\end{proof}

\subsection{Estimating $\cB_\phi^N(\mu)$}

Based on the results obtained above, we are ready to formulate and
prove the main result of this section. In the next proposition, we
provide both an upper and a lower bound on the Laplace transform of
the second moment of $B^N_\phi(t)$ given in~\eqref{e:BN}.  We will
adopt the same notations and conventions introduced at the beginning
of Section~\ref{sec:iteration}.

\begin{proposition}\label{p:mainB}
Let $\lambda>0$ and, for $N\in\N$, let $h^N$ be the solution of~\eqref{e:akpz:torus1} and $\phi\in L^2(\T^2)$ be a test function. 
Let $\cB^N_\phi$ be defined as in~\eqref{e:Laplace}.  
\begin{enumerate}[label=$\mathrm{(UB)}$]
\item\label{i:UB} For every $\delta>0$ there exists a constant $c_\delta>0$ (depending also on $\lambda$) such that for all $N\in\N$ and $\mu>0$ 
\begin{equs}[e:Bupper]
\cB_\phi^N(\mu)\leq \frac{c_\delta}{\,\mu}\Big[\LlN(\mu,0)\Big]^{\frac12}\Big(\log \LlN(\mu,0)\Big)^{5+\delta}\|\phi\|_{L^2(\T^2)}^2
\end{equs}
where $\Ll^N(\mu,0)$ is defined according to~\eqref{e:L} (see also Definition \ref{def:OpS});
\end{enumerate}
\begin{enumerate}[label=$\mathrm{(LB)}$]
\item\label{i:LB} There exists a constant $C$ such that
   for all $N\in\N, k\in\N$ and $0<\mu\le N^2$, 
\begin{equation}\label{e:Blower}
\begin{aligned}
  \cB_\phi^N(\mu)\geq \frac{1}{C\,\mu} \sum_{|j|^2\le \mu}&\frac{|\hat\phi(j)|^2}{c_{2k+1}^+f_{2k+1}(2)+\frac{|j|^2}{\mu}{\LBN_k(4\mu,z_{2k+1}(2))}}\times\\
  &\times\Big[{\LBN_k((\mu+\tfrac12|j|^2)\vee 1,z_{2k+2}(2))}-f_{2k+1}(2)\Big]\,.
 \end{aligned}
\end{equation}
\end{enumerate}

\end{proposition}

\begin{proof}
We use~\eqref{e:Laplace} and we focus on the scalar product at its right hand side. 
Throughout the proof, in order to lighten the notation we omit the subscript
``${\fock}$'' in all scalar products appearing below. 

Let us begin with~\ref{i:UB}. By Lemma~\ref{l:Sandwich} together with 
the fact that $-\genap(\mu-\gensy)^{-1}\genam$ is a positive operator, for any $k\in\N$ we have 
\begin{equs}[e:Reduction]
\langle \nf_\phi&, (\mu-\gen)^{-1}\nf_\phi\rangle\leq \langle \nf_\phi, (\mu-\gensy+\Op_{2k+2}-\genap(\mu-\gensy)^{-1}\genam)^{-1}\nf_\phi\rangle\\
&\leq\langle \nf_\phi, (\mu-\gensy+\Op_{2k+2})^{-1}\nf_\phi\rangle\leq \langle \nf_\phi, (\mu-\gensy(1+c_{2k+2}^-\cS^N_{2k+2}))^{-1} \nf_\phi\rangle\,,
\end{equs}
where in the last passage we applied Theorem~\ref{thm:Main}. 
Recalling the definition of $\nf_\phi$ in~\eqref{eq:ches} and~\eqref{eq:explFock}, and observing that the operator
$(\mu-\gensy(1+c_{2k+2}\cS^N_{2k+2}))^{-1} $ is diagonal in Fourier space,
the right hand side equals 
\begin{equs}
  2&\lambda^2\sum_{j\in\Z^2} |\hat\phi(j)|^2\times\\
  &\times\sum_{\ell+m=j}\frac{(\nonlin_{\ell,m})^2}{\mu+\Gamma(\ell,m)\Big(1+\frac{c_{2k+2}^-}{f_{2k+2}(2)}[\LBN_k(\mu+\Gamma(\ell,m), z_{2k+2}(2))-f_{2k+2}(2)]\Big)}\,,
\end{equs} 
where we adopted the same convention as in~\eqref{e:Conv1} for $\Gamma$ and used that, by the definition of $\nonlin$ and $\indN{}$, $|\ell|,|m|\geq 1$, to remove the $\vee 1$ that appears in the definition of $\cS^M_{2k+2}$. 
Note that the inner sum (for fixed $j$) is precisely of the form \eqref{e:MainQntyUB}, except that
$|k_{2:n}|$ is set to zero. Therefore, proceeding as in the proof of Lemma~\ref{l:LBtoUB}, we obtain
\begin{equation}\label{e:FinalUPPER}
\begin{aligned}
  \langle \nf_\phi&, (\mu-\gen)^{-1}\nf_\phi\rangle\\
  &\leq \frac{f_{2k+2}(2)}{2 \pi c_{2k+2}^-}\Big(1+\frac{4\lambda\pi C_{\Di}}{\sqrt{z_{2k+2}(1)}}\Big)\sum_{j\in\Z^2} |\hat\phi(j)|^2 \UBN_k(\mu+\tfrac12|j|^2,z_{2k+3}(2))\\
  &\leq \frac{c_{2k+3}^+}{2} f_{2k+2}(2) \UBN_k(\mu, z_{2k+3}(2)) \|\phi\|^2_{L^2(\T^2)}\lesssim  \frac{k^{\frac92 +3\delta}}{\LBN_k(\mu, 0)} \LlN(\mu,0)\|\phi\|^2_{L^2(\T^2)}
  \end{aligned}
 \end{equation}
where the second inequality holds in view of~\eqref{e:CondonK} and 
the monotonicity of $\UBN$ in the first variable, while the last by the definition of $\UBN$, the monotonicity of $\LBN$ in its second argument, the definition of $\LlN$ and $f_{2k+3}, z_{2k+3}$ 
in~\eqref{e:LUBk} and in~\eqref{e:Functions} respectively,
and the fact that, in view of Theorem~\ref{thm:Main} 
the sequence $\{c_{2k+1}^+\}_k$ converges to a finite constant depending on $\delta$, so that 
the constant hidden in $\lesssim$ 
is an absolute positive constant depending only on $\delta,\lambda$ but on neither $k$ nor $N$.

At this point, it remains to optimise over $k$ in order to obtain the smallest possible upper bound. 
By Stirling's formula 
\begin{equ}[e:Stirling]
 \frac{k^{\frac92 +3\delta}}{\LBN_k(\mu, 0)} \leq  \frac{k^{\frac92 +3\delta} k!}{(\frac12 \log \LlN(\mu,0))^k}\lesssim e k^{5+3\delta} \exp\Big[k \log\Big(\frac{2k}{e \log \LlN(\mu,0)}\Big)\Big]\,.
\end{equ}
We choose then $k=k(\mu/N^2)$  as
\begin{equ}
k(\mu/N^2)\eqdef \left\lfloor\frac12 \log \Ll^N(\mu,0)\right\rfloor=\left\lfloor\frac12 \log \Ll(\mu/N^2,0)\right\rfloor\,\label{e:k}\,.
\end{equ}
With this choice of $k$ and~\eqref{e:Stirling}, we obtain  
\begin{equ}[e:ULB]
\frac{k^{\frac92 +3\delta}}{\LBN_k(\mu, 0)} \lesssim \Big[\LlN(\mu,0)\Big]^{-\frac12}\Big(\log \LlN(\mu,0)\Big)^{5+3\delta}
\end{equ}
from which, plugging~\eqref{e:ULB} into~\eqref{e:FinalUPPER} and recalling~\eqref{e:Laplace},
~\eqref{e:Bupper} follows (with $\delta$ replaced by $3\delta$).
\medskip

We now turn to~\ref{i:LB}. Arguing as in~\eqref{e:Reduction}, for any $k\in\N$ we have 
\begin{equ}[e:UltimateLB]
\langle \nf_\phi,(\mu-\gen)^{-1}\nf_\phi\rangle\ge \langle \nf_\phi,(\mu-\gensy+\Op_{2k+1}-\genap(\mu-\gensy)^{-1} \genam)^{-1}\nf_\phi\rangle\,.
\end{equ}
For $\Op_{2k+1}$ we use the upper bound provided by
Theorem~\ref{thm:Main} while for $-\genap(\mu-\gensy)^{-1} \genam$ we
use Lemma~\ref{l:H3'} with the choice
$G(x)\eqdef \UBN_{k-1}(x,z_{2k+1}(2))$. 
Hence,~\eqref{e:UltimateLB}
is bounded below by
\begin{equs}[e:BigSumLB]
{2\lambda^2}&\sum_j|\hat\phi(j)|^2\sum_{\ell+m=j}\frac{(\nonlin_{\ell,m})^2}{\mu+\Gamma(\ell,m)(1+F_k(j))\UBN_{k-1}(\mu+\Gamma(\ell,m),z_{2k+1}(2)))}\\
&\geq {\lambda^2}\sum_j\frac{|\hat\phi(j)|^2}{F_k(j)}\sum_{\ell+m=j}\frac{(\nonlin_{\ell,m})^2}{\mu+\Gamma(\ell,m)\UBN_{k-1}(\mu+\Gamma(\ell,m),z_{2k+1}(2))}
\end{equs}
where we introduced $F_k(j)\eqdef c_{2k+1}^+f_{2k+1}(2)+c\lambda^2g(j)\geq 1$
for $k\in\N$ and $j\in\Z^2$, in which $c$ is the constant that
appears in \eqref{e:H3'} and $g$ is defined in \eqref{e:fN}. 
Also in
this case, the inner sum in~\eqref{e:BigSumLB} has the same structure
as in~\eqref{e:MainQnty}, so that, proceeding as in the proof of
Lemma~\ref{l:UBtoLB}, we
obtain
\begin{equs}[e:UltimateSumLB1]
\sum_{\ell+m=j}&\frac{(\nonlin_{\ell,m})^2}{\mu+\Gamma(\ell,m)\UBN_{k-1}(\mu+\Gamma(\ell,m),z_{2k+1}(2))}\\
&\geq \frac{1}{4\lambda^2\pi}\Big(1-\frac{4\pi \lambda C_{\Di}}{\sqrt{z_{2k+1}(1)}}\Big)\Big[\LBN_k(1+\mu+\tfrac12|j|^2 ,z_{2k+1}(2))-f_{2k+1}(2)\Big]\\
&\geq \frac{1}{8\lambda^2\pi}\Big[\LBN_k(1+\mu+\tfrac12|j|^2,z_{2k+1}(2))-f_{2k+1}(2)\Big]
\end{equs}
where we further exploited that by our choice of $z_{2k+1}$ in~\eqref{e:Functions} with $K_1$ large, the quantity the constant 
in the rounded brackets in the second line is bigger than $1/2$ (see~\eqref{e:CondonK}). 

Restricting to $|j|^2\leq \mu$ and plugging the expression for $F_k(j)$ back in, 
we see that, for all $k\in\N$ 
(modulo constants independent of $\mu$, $k$ and $N$)~\eqref{e:BigSumLB} is lower bounded by 
\begin{equs}[e:BigSumLB2]
\sum_{|j|^2\le \mu}\frac{|\hat\phi(j)|^2}{c_{2k+1}^+f_{2k+1}(2)+{\lambda^2g(j)}}\Big[{\LBN_k(1+\mu+\tfrac12|j|^2,z_{2k+1}(2))}-f_{2k+1}(2)\Big]\,. 
\end{equs}
We are left to deal with the denominator in the sum in~\eqref{e:BigSumLB2} for which we need Lemma~\ref{l:g}.
Since $|j|^2\leq \mu\leq N^2$ (in particular $\mu_N\le 1$) for any $k\in\N$ we have 
\begin{equs}
g(j)&\lesssim \frac{|j|^2}{\mu}\Big(\frac{\log(\mu/|j|^2)}{\UBN_{k-1}(8\mu, z_{2k+1}(2))}+\frac{1}{\lambda^2} \LBN_{k}(8\mu,z_{2k+1}(2))\Big)\\
&\lesssim \frac{|j|^2}{\lambda^2\mu}\Big(\LBN_{k-1}(8\mu,z_{2k+1}(2))+ \LBN_{k}(8\mu,z_{2k+1}(2))\Big)\lesssim \frac{|j|^2}{\lambda^2\mu}{\LBN_{k}(4\mu,z_{2k+1}(2))}
\end{equs}
where, in the passage from the first to the second line we exploited the definition of $\UB^N_{k-1}$ 
and of $\Ll^N$, while in the last the monotonicity of $\LB_k$ with respect to its first argument and 
the fact that for all $k$, $\LB_{k-1}\leq\LB_k$.  
Recalling \eqref{e:Laplace}, we deduce \eqref{e:Blower}.
\end{proof}

\section{Proof of the main results}
\label{sec:provathmmain}

This section is devoted to the proofs of the main theorems. We begin with the bulk diffusivity, since, as we will see, 
the bounds we aim at follow directly from Proposition~\ref{p:mainB}. 

\subsection{The bulk diffusivity: proof of Theorem~\ref{thm:BD}}

At first we provide an equivalent formulation of the bulk diffusivity $D_N$ defined in~\eqref{e:BD}, which 
shows that $D_N$ represents the average speed at which the mass of the solution $H_N$ spreads in time. 

\begin{lemma}\label{l:BD}
For $N\in\N$, let $D_N$ be the bulk diffusivity defined in~\eqref{e:BD}. Then, for all $N\in\N$ and $t>0$, 
the following equality holds
\begin{equ}[e:BDNew]
t\, D_N(t)=t + N^2\Exp[B_{e_0}^N(t/N^2)^2]
\end{equ}
where $B^N$ was defined in~\eqref{e:BN} and $e_0\equiv\frac{1}{2\pi}$ is the $0$-th Fourier basis element.
\end{lemma}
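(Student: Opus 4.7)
The plan is to chain three simple reductions: first, translation invariance of the stationary law $\mathbf P^N$ turns the spatial integral into the variance of a single spatially integrated observable; second, the scaling identity $H_N(t,x)\overset{{\rm law}}{=}h^N(t/N^2,x/N)$ of \eqref{e:Scaling} rewrites that observable as a test against $e_0$ on $\T^2$; third, a time-rescaling in the double integral matches the resulting expression with $\Exp[(B_{e_0}^N(t/N^2))^2]$ as defined in \eqref{e:BN}.

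For the first reduction, set $\Phi(r,x,y)\eqdef \Exp[\tilde\cN[H_N](r,x)\tilde\cN[H_N](0,y)]$. Translation invariance of the law (a consequence of stationarity and the white-noise initial condition) implies that $\Phi(r,x,y)$ depends only on $r$ and $x-y$, so averaging over translations gives
\[
\int_{\T_N^2}\Phi(r,x,0)\,\dd x
=\frac{1}{|\T_N^2|}\Exp\bigl[\mathcal I_N(r)\mathcal I_N(0)\bigr],\qquad \mathcal I_N(r)\eqdef\int_{\T_N^2}\tilde\cN[H_N](r,x)\,\dd x.
\]
For the second reduction, since $\Pi_1$ leaves the zeroth Fourier mode invariant, $\mathcal I_N(r)$ coincides with the spatial integral of $(\Pi_1\partial_1 H_N)^2-(\Pi_1\partial_2 H_N)^2$. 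Under the scaling $H_N(t,x)=h^N(t/N^2,x/N)$ one has $\Pi_1\partial_i H_N(r,x)=\tfrac1N(\Pi_N\partial_i h^N)(r/N^2,x/N)$ (because cutting modes $|k|\le 1$ on $\T_N^2$ amounts to cutting $|j|=|Nk|\le N$ on $\T^2$). Squaring yields a $1/N^2$ factor, exactly cancelled by the Jacobian $N^2$ of the change of variable $y=x/N$, so
\[
\mathcal I_N(r)=\int_{\T^2}[(\Pi_N\partial_1 h^N)^2-(\Pi_N\partial_2 h^N)^2](r/N^2,y)\,\dd y=2\pi\,\tilde\cN^N[h^N](r/N^2)[e_0],
\]
the factor $2\pi$ coming from $e_0\equiv\tfrac{1}{2\pi}$. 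Consequently $\int_{\T_N^2}\Phi(r,x,0)\,\dd x=\frac{1}{N^2}\,C(r/N^2)$ with $C(\tau)\eqdef\Exp[\tilde\cN^N[h^N](\tau)[e_0]\,\tilde\cN^N[h^N](0)[e_0]]$.

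For the third reduction, substitute the previous identity into \eqref{e:BD} and change variables $r=N^2 r'$, $s=N^2 s'$ to obtain $tD_N(t)-t=2\lambda^2 N^2\int_0^{t/N^2}\!\int_0^{s'}C(r')\,\dd r'\,\dd s'$. On the other hand, expanding the square in $B_{e_0}^N(T)$ (with $T=t/N^2$) and using stationarity, $\Exp[\tilde\cN^N[h^N](s)[e_0]\,\tilde\cN^N[h^N](u)[e_0]]=C(s-u)$, together with the symmetry $C(-\tau)=C(\tau)$, yields
\[
\Exp[(B_{e_0}^N(T))^2]=\lambda^2\int_0^T\!\int_0^T C(s-u)\,\dd u\,\dd s=2\lambda^2\int_0^T\!\int_0^{s'}C(r')\,\dd r'\,\dd s'.
\]
Comparing the two displays gives \eqref{e:BDNew}.

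The only genuine subtlety is the careful bookkeeping of the factors of $N$ and $2\pi$ arising from the change of torus and of Fourier conventions, in particular the exact cancellation between the $1/N^2$ produced by squaring the derivative and the $N^2$ from the spatial Jacobian; no deeper step is required.
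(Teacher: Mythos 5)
Your proof is correct and follows essentially the same route as the paper's: the scaling relation \eqref{e:Scaling}, translation invariance of the stationary law to convert the spatial integral of the two-point function into the covariance of the observable tested against $e_0$, and the standard identity $\int_0^T\!\int_0^T C(s-u)\,\dd u\,\dd s=2\int_0^T\!\int_0^{s}C(r)\,\dd r\,\dd s$ identifying the double time integral with $\Exp[(B^N_{e_0}(T))^2]$. The only difference is the order of the first two reductions (you apply translation invariance before rescaling, the paper rescales first), which is immaterial; your bookkeeping of the factors of $N$ and $2\pi$ checks out.
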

\begin{proof}
Notice at first, that in view of the scaling relation~\eqref{e:Scaling} and the definition of $\cN^N$ in~\eqref{eq:ches}, 
it is immediate to see that for any $N\in\N$ and $t\geq 0$
\begin{equ}[e:BDScaling]
t D_N(t)=t + 2\lambda^2N^2\int_0^{t/N^2}\int_0^s \int_{\T^2}\Exp\Big[\cN^{N}[u^{N}](r,0)\cN^{N}[u^{N}](0,x)]\Big]\dd x\dd r\dd s\,.
\end{equ}
Now, the process $u^N$ with white noise initial condition is
translation invariant in law, which implies that, for every $r\geq 0$, 
the spatial integral in the second summand on the right hand equals
\begin{equs}
\frac{1}{4\pi^2}\int_{\T^2}\int_{\T^2}\Exp\Big[\cN^{N}[u^{N}](r,y) &\cN^{N}[u^{N}](0,x+y)]\Big]\dd x\dd y\\
&=\Exp\Big[\cN^{N}[u^{N}(r)](e_0) \cN^{N}[u^{N}(0)](e_0)]\Big]
\end{equs}
where the last passage can be obtained by integrating first in $x$ and then in $y$. To conclude it is sufficient to note that for any $t\geq 0$
\begin{equ}
\int_0^t\int_0^s \Exp\Big[\cN^{N}[u^{N}(r)](e_0) \cN^{N}[u^{N}(0)](e_0)]\Big]\dd r\dd s=\frac{1}{2}\Exp\Big[\Big(\int_0^t \cN^{N}[u^{N}(s)](e_0)\dd s\Big)^2\Big]\,.
\end{equ}  
\end{proof}

The advantage of~\eqref{e:BDNew} is that the bulk diffusivity $D_N$ is expressed in terms of an observable of the 
form~\eqref{e:BN} so that the results in the previous section are directly applicable. 
We are now ready to prove Theorem~\ref{thm:BD}. 

\begin{proof}[of Theorem~\ref{thm:BD}]
Thanks to~\eqref{e:BDNew} and~\eqref{e:Laplace}, it is immediate to 
show that for every $N\in\N$
\begin{equ}
\cD_N(\mu)=\mu\int_0^\infty e^{-\mu t} t\, D_N(t)\dd t =\frac{1}{\mu}+{N^2}\cB_{e_0}^N(\mu N^2)\,.
\end{equ}
Therefore, it remains to bound the second summand, for which we exploit Proposition~\ref{p:mainB}. 
We begin with the upper bound. Notice that~\eqref{e:Bupper} gives
\begin{equs}
N^2\cB_{e_0}^N(\mu N^2)&\leq \frac{C}{\mu} \Big[\LlN(\mu N^2,0)\Big]^{\frac12}\Big(\log \LlN(\mu N^2,0)\Big)^{5+3\delta}=\frac{C}{\mu} \Big[\Ll(\mu,0)\Big]^{\frac12}\Big(\log \Ll(\mu,0)\Big)^{5+3\delta}
\end{equs}
from which~\eqref{e:BDUBound} follows.  

For the lower bound instead,~\eqref{e:Blower} implies that for all $k\in\N$, $\mu>0$ we have
\begin{equ}[e:nonop]
\liminf_{N\to\infty} N^2\cB_{e_0}^N(\mu N^2)\geq \frac{1}{C\,\mu} \Big[\frac{\LB_k(\mu,z_{2k+2}(2))}{c^+_{2k+1}f_{2k+1}(2)}-\frac{1}{c^+_{2k+1}}\Big]\,,
\end{equ}
and we are left to optimise over $k$. But we have already done so in the proof of Proposition~\ref{p:mainB}\ref{i:UB}. 
Upon choosing $k=k(\mu)$ as in~\eqref{e:k}, by~\eqref{e:ULB}, we have 
\begin{equs}[e:Estimate]
\frac{\LB_k(\mu,z_{2k+2}(2))}{c^+_{2k+1}f_{2k+1}(2)}\gtrsim \Big[\Ll(\mu,0)\Big]^{\frac12}\Big(\log \Ll(\mu,0)\Big)^{-5-3\delta}
\end{equs}
and, since the right hand side diverges as $\mu\to 0$, at the price of an absolute constant, we can 
reabsorb the $-1/c^+_{2k+1}$ in~\eqref{e:nonop}, so that the proof is complete. 
\end{proof}

\subsection{The diffusive scaling: proof of Theorem~\ref{thm:main2} and Corollary \ref{cor:t+t-}}

In order to prove Theorem~\ref{thm:main2}, 
we first consider the weak formulation of AKPZ on the torus of side length $1$ 
and separately analyse 
each of the three summands appearing on the right hand side of~\eqref{e:akpz:torus1}. 
More precisely, let $\phi$ be a smooth test function on $\T^2$ and $N$ fixed, then, for all $t\geq 0$, $h^N$ satisfies
\begin{equ}\label{eq:ABC}
h^{N}(t)[\varphi]-h^{N}(0)[\varphi]=\underbrace{\frac12\int_0^t h^{N}(s)[\Delta\varphi]\dd s}_{A^N_\varphi(t)}+B_\phi^N(t)+\underbrace{\int_0^t\xi(\dd s)[\varphi] }_{C^N_\varphi(t)}\,,
\end{equ}
where $B_\phi^N(t)$, the integral in time of the nonlinearity, was defined in~\eqref{e:BN}. We recall that $B^N_\phi$ is a centered random variable, and the same can be easily verified for $A^N_\phi,C^N_\phi$.
Now, $B^N_\phi$, or more precisely the Laplace transform of its second moment, has been thoroughly studied 
in the previous section. In the following proposition, 
we derive suitable bounds on the second moments of $A^N_\varphi$ and $C^N_\varphi$ 
and on their Laplace transforms. To that end we define  
\begin{equ}[e:ACLaplace]
\cA_\phi^N(\mu)\eqdef \mu \int_0^\infty e^{-\mu t} \Exp A_\phi^N(t)^2\, \dd t\quad\text{and}\quad\cC_\phi^N(\mu)\eqdef\mu  \int_0^\infty e^{-\mu t} \Exp C_\phi^N(t)^2\, \dd t
\end{equ}
for $\mu>0$. 

\begin{proposition}\label{p:mainAC}
Let $N\in\N$, $h^N$ be the solution of~\eqref{e:akpz:torus1} and $\phi\in L^2(\T^2)$ be a test function. 
Let $A^N_\varphi(t)$ and $C^N_\varphi(t)$ be defined according to~\eqref{eq:ABC}. 
Then, there exists a constant $c>0$ independent of $N$ and $\phi$ such that for every $t,\,\mu>0$ we have 
\begin{equs}
\Exp A_\phi^N(t)^2\leq ct\|\phi\|_{L^2(\T^2)}^2\quad&\text{and}\quad\cA_\phi^N(\mu) \leq \frac{c}{\mu}\|\phi\|_{L^2(\T^2)}^2\,,\label{e:A}\\
\Exp C_\phi^N(t)^2=t \|\phi\|_{L^2(\T^2)}^2\quad&\text{and}\quad\cC_\phi^N(\mu)= \frac{1}{\mu}\|\phi\|_{L^2(\T^2)}^2\label{e:C}\,.
\end{equs}
\end{proposition}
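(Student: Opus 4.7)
I would tackle the two observables separately. For $C_\phi^N(t)=\int_0^t\xi(\dd s)[\phi]$, which is a stochastic integral of the space-time white noise against $\phi$, It\^o's isometry immediately gives
\begin{equ}
  \Exp C_\phi^N(t)^2=\int_0^t\|\phi\|_{L^2(\T^2)}^2\dd s=t\|\phi\|_{L^2(\T^2)}^2\,,
\end{equ}
after which the Laplace transform identity $\mu\int_0^\infty e^{-\mu t}t\,\dd t=1/\mu$ yields \eqref{e:C} in equality form. So the substantive task is to control $\Exp A_\phi^N(t)^2$.

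For this, my plan is to invoke a Kipnis--Varadhan-type bound for time integrals of mean-zero observables of the stationary (and non-reversible) Markov process $h^N$: for any $F\in L^2(\eta)$ in the domain of $\gensy^{-1}$,
\begin{equ}\label{eq:KVbound}
  \Exp\Big[\Big(\int_0^t F(h^N(s))\dd s\Big)^2\Big]\leq c\,t\,\langle F,\gensy^{-1}F\rangle_{\fock}
\end{equ}
for a universal constant $c>0$. This is the classical Kipnis--Varadhan inequality when the process is reversible (with $c=2$), and extends to the non-reversible setting with a worse constant via the usual variational-formula argument (see e.g.~\cite{Landim2004,Yau}). Applied to $F(h)=\tfrac12 h[\Delta\phi]$, which lies in the first Wiener chaos, this reduces the problem to computing $\langle F,\gensy^{-1}F\rangle_{\fock}$. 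Writing $h^N=(-\Delta)^{-1/2}u^N$ with $u^N$ spatial white noise, the kernel of $F$ in the first chaos is $\hat\psi(k)=-\tfrac12|k|\hat\phi(k)$; since by Lemma~\ref{lem:generator} $\gensy$ acts on the first chaos as multiplication by $\tfrac12|k|^2$ in Fourier, a short explicit calculation gives
\begin{equ}
  \langle F,\gensy^{-1}F\rangle_{\fock}=2\sum_{k\neq 0}\frac{|\hat\psi(k)|^2}{|k|^2}=\tfrac12\sum_{k\neq 0}|\hat\phi(k)|^2\leq\tfrac12\|\phi\|_{L^2(\T^2)}^2\,.
\end{equ}
Plugging back gives $\Exp A_\phi^N(t)^2\leq(c/2)t\|\phi\|_{L^2}^2$, and the Laplace bound follows from the same elementary integration used for $C_\phi^N$.

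The only delicate step is justifying \eqref{eq:KVbound} in this infinite-dimensional, non-reversible setting; once this is granted, the rest is a short explicit Fourier computation. Membership of $F$ in $\mathrm{dom}(\gensy^{-1})$ is immediate since $F$ sits in the first chaos, where $\gensy$ has a spectral gap ($|k|^2\geq 1$ on $\Z^2\setminus\{0\}$), and the existence of a Kipnis--Varadhan bound of the form \eqref{eq:KVbound} for non-reversible processes is classical and requires only that $h^N$ be a well-posed stationary Markov process --- which is precisely what~\cite[Proposition 3.4]{CES19} provides.
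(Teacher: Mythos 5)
Your proof is correct and follows essentially the same route as the paper: the paper likewise disposes of $C_\phi^N$ by the It\^o isometry, rewrites $A^N_\phi(t)=\frac12\int_0^t u^{N}(s)[(-\Delta)^{1/2}\phi]\dd s$ to land in the first Wiener chaos of $u^N$, and then invokes \cite[Lemma 4.3]{CES19}, which is precisely the non-reversible Kipnis--Varadhan/It\^o-trick bound $\Exp[(\int_0^t F(u^N(s))\dd s)^2]\lesssim t\,\langle F,(-\gensy)^{-1}F\rangle_{\fock}$ that you postulate, after which the same Fourier computation gives $\langle F,(-\gensy)^{-1}F\rangle_{\fock}\lesssim\|\phi\|_{L^2(\T^2)}^2$. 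The only difference is presentational: the paper outsources the delicate step you flag to the cited lemma (proved there by the forward--backward martingale decomposition) rather than to the general non-reversible Kipnis--Varadhan theory.
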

\begin{proof}
The result on the Laplace transform is an immediate consequence of that on $A^N_\varphi$ and $C^N_\varphi$. 
The first identity in~\eqref{e:C} is straightforward and 
follows from an explicit computation that uses the correlation structure of the white noise $\xi$.
To estimate $A_\phi^N(t)$ we make use of~\cite[Lemma 4.3, Eq. (4.11)]{CES19}, which states that for any $T>0$
\begin{equ}[e:Energy]
\E\Big[\sup_{t\leq T}\Big|\int_0^t u^N(s)[\Delta \phi]\dd s\Big|^2\Big]^{1/2}\lesssim T^{1/2}\|\phi\|_{1,2}\,,
	\end{equ}
where 
\begin{equ}
\|\phi\|_{1,2}^2 =\sum_{k\in\Z^2}(1+|k|^2)|\hat{\phi}(k)|^2\,.
	\end{equ}
Here, we recall that $u^N$ is the solution to the stochastic Burgers equation~\eqref{e:AKPZ:u} so that, upon noting
\begin{equ}[e:Arewrite]
A^N_\varphi(t)=\frac12\int_0^t u^{N}(s)[(-\Delta)^{1/2}\varphi]\dd s\,,
\end{equ}
the result follows at once.
\end{proof}

The previous statement was the missing ingredient needed in the proof of 
Theorem~\ref{thm:main2}. 

\begin{proof}[of Theorem~\ref{thm:main2}]
Notice at first that, given any test function $\phi$, by the definition of $H^\eps_N[\phi]$ in~\eqref{e:PerHRescaled} 
and the equality in law~\eqref{e:Scaling}, it follows that 
\begin{equ}
\cV^{\eps,N}_{\phi}(\mu)=\cV^{N^{-1},1}_{\phi^{(\eps N)}}(\eps^2 N^2 \mu)
\end{equ}
where, for any $a>0$, $\phi^{(a)}$ is given as in the introduction, i.e. $\phi^{(a)}(\cdot)=a^2\phi(a \cdot)$. 
In view of the decomposition~\eqref{eq:ABC}, we have 
\begin{equ}
\cV^{N^{-1},1}_{\phi^{(\eps N)}}(\eps^2 N^2 \mu)\lesssim \cA_{\phi^{(\eps N)}}^N(\eps^2N^2\mu) + \cB_{\phi^{(\eps N)}}^N(\eps^2N^2\mu) + \cC_{\phi^{(\eps N)}}^N(\eps^2N^2\mu)
\end{equ}
so that, since $\|\phi^{(N\eps)}\|^2_{L^2(\T^2)}= (N\eps)^2\|\phi\|^2_{L^2(\R^2)}$, 
the bound~\eqref{e:UBVL} follows immediately from Propositions~\ref{p:mainAC} and~\ref{p:mainB}\ref{i:UB}.  

We now turn to the lower bounds. To that end note that for any $\mu>0$,  we have 
\begin{equ}[e:lowerV]
\cV^{N^{-1},1}_{\phi^{(\eps N)}}(\eps^2 N^2 \mu) \gtrsim \cB^N_{\phi^{(\eps N)}}(\eps^2 N^2 \mu) - \frac{\|\phi\|_{L^2(\R^2)}}{\sqrt{\mu}} \sqrt{\cB^N_{\phi^{(\eps N)}}(\eps^2 N^2 \mu)}-\frac{\|\phi\|^2_{L^2(\R^2)}}{\mu}
\end{equ}
where we exploited once more the decomposition~\eqref{eq:ABC}, the Cauchy-Schwarz inequality 
to control the cross products as well as Proposition~\ref{p:mainAC}, 
to bound the occurrences of $\cA_\phi^N$ and $\cC_\phi^N$. 
Now, by Proposition~\ref{p:mainB}\ref{i:LB}, for any integer $k\in\N$, we have 
\begin{equs}[tent2]
\liminf_{N\to\infty}&\,\cB^N_{\phi^{(\eps N)}}(\eps^2 N^2 \mu)\\
&\gtrsim\left[
1-\frac{1}{ c_{2k+1}^+Y_k}\right]
\lim_{N\to\infty}\frac1{(N\eps)^2}\sum_{|j|^2\le (N\eps)^2\mu}\frac{|\widehat{\phi^{(N\eps)}}(j)|^2}{\mu/Y_k+ |j|^2/(N\eps)^2}
\end{equs}
where
\begin{equ}[eq:Yk12]
  Y_k\eqdef \frac{\LB_k(4\eps^2\mu,z_{2k+1}(2))}{c_{2k+1}^+f_{2k+1}(2)}
\end{equ}
and we used the fact that $\LB_k$ is continuous and decreasing in its first argument. 
Since $\widehat{\phi^{(N\eps)}}(j)= \hat \phi(p)$ for $p\eqdef j/(N\eps)$, the limit in~\eqref{tent2} reduces to
  \begin{equ}[sognounpanino]
\lim_{N\to\infty}    \frac1{(N\eps)^2}\sum_{\substack{p\in (\mathbb Z/{N\eps})^2\\ |p|^2\le \mu}}\frac{|\hat \phi(p)|^2}{\mu/Y_k+{|p|^2}}=\int_{\R^2}\mathds{1}_{ |p|^2\le \mu}\frac{|\hat \phi(p)|^2}{\mu/Y_k+{|p|^2}}\dd p.
  \end{equ}
Now we proceed similarly to the proof of the lower bound in Theorem \ref{thm:BD}. 
Namely, we fix $k(\eps^2\mu)$ as in~\eqref{e:k} and we note that, since we are assuming 
$\mu\leq \eps^{-1}$, $k(\eps^2\mu)$ diverges as $\eps\to 0$. 
Moreover, arguing once more as in the proof of Proposition~\ref{p:mainB}\ref{i:UB} we see that also $Y_{k(\eps^2\mu)}$ 
diverges since 
\begin{equ}[pranzo]
Y_{k(\eps^2\mu)}\gtrsim\sqrt{\Ll(\eps^2\mu,0)} (\log \Ll(\eps^2\mu,0))^{-5-3\delta}
.
\end{equ}
In particular, the negative term in~\eqref{tent2} can be neglected.
Also, since we are assuming $\mu\le (\log (1/\eps))^\half (\log\log(1/\eps))^{-5-3\delta}$, the ratio $\mu/Y_k$ tends to zero as $\eps\to 0$. Altogether, we get 
\begin{equ}\label{tent3bis}
\liminf_{\eps\to0}  \liminf_{N\to\infty}\,\cB^N_{\phi^{(\eps N)}}(\eps^2 N^2 \mu)\gtrsim
\int_{\R^2}\mathds{1}_{ |p|^2\le \mu}\frac{|\hat \phi(p)|^2}{{|p|^2}}\dd p.
\end{equ}
If $\phi$ has non-zero average then the integral is infinite. If instead the (smooth) function
 $\phi$ has zero average, the integral is finite: in this case, assuming that $\mu$ is sufficiently large (larger than some constant $a_\phi$), the integral is arbitrarily close to $\|\phi\|_{-1}^2$. In either case,  \eqref{e:UBVLmodo1} is proven.
\end{proof}
\begin{proof}[of Corollary \ref{cor:t+t-}]
Throughout the proof we fix $\delta>0$ as in the formulation of Corollary~\ref{cor:t+t-}. We start with \eqref{e:nat} so that
  \begin{equ}
    0\le 1-\frac{{\rm Cov}(H^{\eps}_N(t)[\phi], H^{\eps}_N(0)[\phi])}{{\rm
        Var}(H^{\eps}_N(0)[\phi])}= \frac{V_\phi^{\eps,N}(t)}{2{\rm
      Var}(H^{\eps}_N(0)[\phi])}
\label{eq:nat2}
\end{equ}
  and we recall that the denominator is uniformly positive and finite: since the stationary measure is the GFF on the plane and $\int_{\R^2}\phi(x)\dd x=0$,
  \begin{equ}
\lim_{N\to\infty}{\rm
      Var}(H^{\eps}_N(0)[\phi])=\frac{1}{2\pi^2}\int_{\R^2}\frac{|\hat\phi(k)|^2}{|k|^2}\dd k<\infty\,.
\label{e:nat3}    
  \end{equ}

To prove \eqref{e:cor1} we need to show that $V_\phi^{\eps,N}(t)$ is uniformly small for $t<t_-(\eps)$.
Recall the definition \eqref{e:Vi} of $V_\phi^{\eps,N}(t)$, together with
$H_N^\eps(t)[\phi]=h^N(t/(N\eps)^2)[\phi^{(\eps N)}]$.
One has then (with the notations of \eqref{eq:ABC})
\begin{equs}
  V_\phi^{\eps,N}(t)\lesssim \Exp A_{\phi^{(\eps N)}}(t/(\eps N)^2)^2+
  \Exp B_{\phi^{(\eps N)}}(t/(\eps N)^2)^2+\Exp C_{\phi^{(\eps N)}}(t/(\eps N)^2)^2.
\end{equs}
Thanks to Proposition \ref{p:mainAC} and $\|\phi^{(\eps N)}\|^2_{L^2(\T^2)}=(N\eps)^2\|\phi\|^2_{L^2(\R^2)}$, the terms involving $A$ and $C$ are upper bounded by a constant times $t \|\phi\|^2_{L^2(\R^2)}$ which is uniformly small in  $\eps\to0$ if $t\le t_-(\eps)\ll1$.
As for the term involving $B$, recall the definition of $\mathcal B_\phi^N(\mu)$ in~\eqref{e:Laplace}. With the same argument as in the proof of
\cite[Lemma 1]{QV} one can show that there exists a universal positive constant $C$ such that, for $t>0$ and letting $\mu_t\eqdef 1/t$, 
\begin{equs}
 \Exp[ B_\phi^N(t)^2]\le C \mathcal B_\phi^N(\mu_t).
\end{equs}
Thanks to \eqref{e:Bupper} and recalling that $\Ll^N(x,z)=L(x/N^2,z)$, we deduce that for any $\delta'>0$ there exists a constant $c_{\delta'}>0$ such that
\begin{equ}
  \Exp[ B_{\phi^{(\eps N)}}^N(t)^2]\le c_{\delta'} t\sqrt{\Ll(\eps^2/t,0)}(\log \Ll(\eps^2/t,0))^{5+\delta'}\|\phi\|^2_{L^2(\R^2)}.
\end{equ}
Choosing $\delta' < \delta$, the claim then follows from the fact that 
\begin{equ}
\lim_{\eps\to 0}t_-(\eps) \sqrt{\Ll(\eps^2/t_-(\eps),0)}(\log \Ll(\eps^2/t_-(\eps),0))^{5+\delta'}=0.	
	\end{equ}
        Finally, let us prove \eqref{e:cor2}. We choose $\mu=\mu(\eps)$ that satisfies
        $1/t_+(\eps)\ll \mu\ll 1/t_-(\eps)$ as $\eps\to0$.
        By definition,
        \[
          \cV^{\eps,N}_\phi(\mu)=\mu\int_0^{t_-(\eps)}e^{-\mu t} V^{\eps,N}_\phi(t)\dd t +\mu
            \int_{t_-(\eps)}^{t_+(\eps)}e^{-\mu t} V^{\eps,N}_\phi(t)\dd t +\mu\int_{t_+(\eps)}^\infty e^{-\mu t} V^{\eps,N}_\phi(t)\dd t\,.
          \]
          Since $ V^{\eps,N}_\phi$ is uniformly bounded and $\mu t_+(\eps)\gg1$, the third integral is negligible. The first integral is also negligible, as follows recalling \eqref{eq:nat2} and \eqref{e:cor1}. On the other hand, from \eqref{e:UBVLmodo1}, we have that $\liminf_{\eps}\liminf_N   \cV^{\eps,N}_\phi(\mu)\ge b \|\phi\|^2_{-1}$. Since the function $t\mapsto \mu e^{-\mu t}$ has integral $1$, we deduce that
          \[
\liminf_{\eps\to0}\liminf_{N\to\infty}\sup_{t\in [t_-(\eps),t_+(\eps)]} V^{\eps,N}_\phi(t)\ge b \|\phi\|^2_{-1}.
            \]
 The conclusion now follows recalling          \eqref{eq:nat2} and \eqref{e:nat3}.
\end{proof}

\subsection{Large time behaviour: proof of Theorem \ref{thm:logt}}
\label{sec:logt}

In order to prove Theorem~\ref{thm:logt}, we need a refined version of the bound obtained in~\cite[Lemma 4.3]{CES19} 
on observables of the form in~\eqref{e:BN}. 

\begin{lemma}\label{lemmaIto}
Let $\phi$ be a test function and, for $N\in\N$, let $B_\phi^N$ be defined according to~\eqref{e:BN}. 
Then, for all $t\geq 0$ the following bound holds
\begin{equ}[harrypotter]
\Exp[ B_\phi^N(t)]^2\lesssim \lambda^2 t \sum_{|k|\leq N}|\hat\phi(k)|^2 \log \Big(\frac{1}{|k/N|^2\vee N^{-2}}\Big)\,.
\end{equ}
\end{lemma}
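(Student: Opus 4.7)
The plan is a two-step reduction: first, from $\Exp[B_\phi^N(t)]^2$ to a weighted $H^{-1}$-type norm via a Kipnis--Varadhan-style variance bound; then an explicit Fourier computation producing the logarithmic factor.

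For the first step I would establish
\[
\Exp[B_\phi^N(t)]^2\lesssim t\,\langle\nf_\phi,(-\gensy)^{-1}\nf_\phi\rangle_\fock,
\]
the classical variance bound for additive functionals of stationary Markov processes whose generator decomposes as $\gen=\gensy+\cA$ with $\cA$ antisymmetric (Lemma~\ref{lem:generator}). In our Gaussian setting the cleanest derivation is via It\^o's formula. Setting $f_\mu\eqdef(\mu-\gen)^{-1}\nf_\phi$ and $\Phi_\mu\eqdef\sint(f_\mu)$, Dynkin's formula combined with $\gen\Phi_\mu=\mu\Phi_\mu-\lambda\cN^N[\cdot](\phi)$ gives
\[
B_\phi^N(t)=\Phi_\mu(u^N_0)-\Phi_\mu(u^N_t)+\mu\int_0^t\Phi_\mu(u^N_s)\,\dd s+M^\mu_t,
\]
with martingale bracket $\Exp\langle M^\mu\rangle_t=2t\langle f_\mu,(-\gensy)f_\mu\rangle_\fock$. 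Squaring, using stationarity on the boundary terms and Cauchy--Schwarz on the time integral, choosing $\mu=1/t$, and applying the identity $\mu\|f_\mu\|^2+\langle f_\mu,(-\gensy)f_\mu\rangle=\mathrm{Re}\langle\nf_\phi,(\mu-\gen)^{-1}\nf_\phi\rangle$ (obtained by taking real parts in $\langle f_\mu,(\mu-\gen)f_\mu\rangle=\langle f_\mu,\nf_\phi\rangle$) yields $\Exp[B_\phi^N(t)]^2\lesssim t\,\mathrm{Re}\langle\nf_\phi,(1/t-\gen)^{-1}\nf_\phi\rangle$. The antisymmetry of $\cA$ gives $\mathrm{Re}\langle F,(\mu-\gen)^{-1}F\rangle\le\langle F,(\mu-\gensy)^{-1}F\rangle$ and the operator monotonicity $\mu-\gensy\ge-\gensy$ completes the reduction to $(-\gensy)^{-1}$.

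For the second step I would compute the inner product in Fourier. By \eqref{e:gens:fock}, on $\fock_2$ the operator $(-\gensy)^{-1}$ is multiplication by $2/(|\ell|^2+|m|^2)$; combining this with the symmetric kernel $\lambda\nonlin_{\ell,m}\hat\phi(-\ell-m)$ of $\nf_\phi$ read off \eqref{eq:ches}--\eqref{eq:explFock}, the bound $|c(\ell,m)|\le|\ell||m|$ from \eqref{e:nonlinCoefficient}, and the change of variable $k=\ell+m$,
\[
\langle\nf_\phi,(-\gensy)^{-1}\nf_\phi\rangle_\fock\lesssim \lambda^2\sum_{|k|\le N}|\hat\phi(k)|^2\sum_{\substack{\ell+m=k,\,\ell,m\ne 0\\|\ell|,|m|\le N}}\frac{1}{|\ell|^2+|m|^2}.
\]
The inner sum is estimated via the parallelogram identity $|\ell|^2+|m|^2=\tfrac12(|k|^2+|\ell-m|^2)$, a Riemann-sum approximation (Lemma~\ref{l:Riemann}) on the $N^{-1}$-rescaled lattice, and polar coordinates: it is bounded by $\int_0^{cN}r\,\dd r/(|k|^2+r^2)\lesssim\log(1/(|k/N|^2\vee N^{-2}))$, with the case $k=0$ recovered by the direct evaluation $\sum_{0<|\ell|\le N}|\ell|^{-2}\sim\log N^2$. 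Combining the two steps yields \eqref{harrypotter}.

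The main obstacle is Step~1: while the Kipnis--Varadhan variance inequality is classical in spirit, one has to set up It\^o's formula for $\Phi_\mu$ in this infinite-dimensional, non-reversible Gaussian framework and identify the Dirichlet form of $\gensy$ in Fock variables. Both tasks are handled routinely by the machinery recalled in Section~\ref{sec:prelim} (in particular by the chaos-raising/lowering decomposition of $\cA$ into $\genap,\genam$), after which Step~2 reduces to the direct sum estimate above.
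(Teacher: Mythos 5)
Your proposal is correct and follows essentially the same route as the paper: reduce $\Exp[B_\phi^N(t)]^2$ to $t\,\langle \nf_\phi,(-\gensy)^{-1}\nf_\phi\rangle_{\fock}$ by a Kipnis--Varadhan/It\^o-trick variance bound, then evaluate this quantity in Fourier, bounding $|c(\ell,m)|\le|\ell||m|$ and estimating $\sum_{\ell+m=k}\indN{\ell,m}/(|\ell|^2+|m|^2)$ by the logarithm. The only difference is in the implementation of the first step: the paper solves the Poisson equation $\gensy\cG^N=\lambda\cN^N(\cdot)[\phi]$ explicitly (it is diagonal in Fourier) and invokes the ready-made martingale estimates of \cite[Lemmas 4.1 and 4.3]{CES19}, whereas you re-derive the same bound via the resolvent $f_\mu=(\mu-\gen)^{-1}\nf_\phi$ with $\mu=1/t$ and the inequality $\mathrm{Re}\langle F,(\mu-\gen)^{-1}F\rangle\le\langle F,(\mu-\gensy)^{-1}F\rangle$ -- a valid, essentially equivalent variant.
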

\begin{proof}
The proof of~\eqref{harrypotter} is extremely close to that of~\cite[Lemma 4.3]{CES19} so we will adopt the same notations 
and conventions therein and we will limit ourselves to sketch the 
main steps, addressing the reader to the above mentioned reference for more details. 
Let $\cG^N$ be the solution of the Poisson equation $\gensy \cG^N(\eta)[\phi]=\lambda \cN^N(\eta)[\phi]$, which is 
explicitly given by 
\begin{equ}
\cG^N(\eta)[\phi]=\lambda \sum_{\ell,m\in\mathbb Z^2} \frac{\nonlin_{\ell,m}}{|\ell|^2+|m|^2}\hat \eta(\ell)\hat \eta(m) \hat \varphi(-\ell-m)\,.
\end{equ}
Then, defining $\energy{}$ as in~\cite[eq. (4.4)]{CES19}, a simple Gaussian computation shows that  
\begin{equ}
\E[\energy{(\cG^N(\eta)[\phi])}]= 8\lambda^2 \sum_{\ell,\,m\in\Z^2}\frac{(\nonlin_{\ell,m})^2}{|\ell|^2+|m|^2}| \hat \varphi(-\ell-m)|^2\,.
\end{equ}
The last sum can be bounded as follows
\begin{equs}
&\sum_{\ell,\,m\in\Z^2}\frac{(\nonlin_{\ell,m})^2}{|\ell|^2+|m|^2}| \hat \varphi(-\ell-m)|^2\lesssim \sum_{|k|\leq N}|\hat\phi(k)|^2\sum_{\ell+m=k}\frac{\indN{\ell,m}}{|\ell|^2+|m|^2}\\
&\lesssim \sum_{|k|\leq N}|\hat\phi(k)|^2\sum_{\substack{\ell+m=k\\ |\ell|\geq |m|}}\frac{\indN{\ell,m}}{|\ell|^2}\lesssim \sum_{|k|\leq N}|\hat\phi(k)|^2\sum_{ N\geq |\ell|> |k|/2\vee 1}\frac{1}{|\ell|^2}\\
&\lesssim \sum_{|k|\leq N}|\hat\phi(k)|^2\int_{ 1\geq |\rho|> |k/N|/2\vee N^{-2}}\frac{\dd\rho}{|\rho|^2}\lesssim \sum_{|k|\leq N}|\hat\phi(k)|^2 \log \Big(\frac{1}{|k/N|^2\vee N^{-2}}\Big)
\end{equs}
where we exploited the symmetry of the summand and the fact that if $\ell+m=k$ and $|\ell|\ge |m|$, 
then necessarily $|\ell|\ge |k|/2$. 
The conclusion follows by~\cite[Lemma 4.1]{CES19}. 
\end{proof}

We are now ready to prove Theorem~\ref{thm:logt}. 

\begin{proof}[of Theorem~\ref{thm:logt}]
Let $\phi$ be a smooth test function on $\R^2$ and 
$g\colon\R^2\to [0,1]$ be positive, smooth and such that $\int_{\R^2} g(y)\,\dd y=1$. 
For $n\in \N$ define $g_n(y)\eqdef g(y/n)$ so that $\int_{\R^2} g_n(y)\, dy= n^2$, and $\psi_n(y)\eqdef (\phi\ast g_n)(y)$. 
Throughout the proof, we will denote by $c_\phi$ a positive constant that may change from line to line 
and that will depend on $\phi$ and possibly $g$. 

Notice at first that 
\begin{equ}
H_N(t)[\psi_n]=\int_{\R^2} g_n(y)\left\{H_N(t)[\phi(\cdot-y)]-H_N(t)[\phi]\right\}\dd y+H_N(t)[\phi]n^2
\end{equ}
which implies 
\begin{equ}[fwr]
    H_N(t)[\phi]-H_N(0)[\phi]= \frac1{n^2}\left(
H_N(t)[\psi_n]-H_N(0)[\psi_n]- v^{(n)}(t)+v^{(n)}(0)
    \right)
\end{equ}
where 
\begin{equ}
v^{(n)}(t)=\int_{\mathbb R^2}g_n(y)\left(H_N(t)[\phi(\cdot-y)]-H_N(t)[\phi]\right)\dd y\,.
\end{equ}
Thanks to the scaling~\eqref{e:Scaling}, it is immediate to see that 
\begin{equ}
H_N(t)[\psi_n]-H_N(0)[\psi_n]\overset{{\rm law}}{=} h^N(t/N^2)[\psi_n^{(N)}]-h^N(0)[\psi_n^{(N)}]
\end{equ}
where $\psi_n^{(N)}$ is given as in the introduction, i.e. $\psi_n^{(N)}(\cdot)\eqdef N^2 \psi_n(N\cdot)$, 
so that we can focus on the right hand side. Applying the decomposition~\eqref{eq:ABC}, we write
\begin{equ}[abc]
h^N(t/N^2)[\psi_n^{(N)}]-h^N(0)[\psi_n^{(N)}] = A_{\psi_n^{(N)}}^N(t/N^2)+B_{\psi_n^{(N)}}^N(t/N^2)+C_{\psi_n^{(N)}}^N(t/N^2)\,.
\end{equ}
By Proposition~\ref{p:mainAC} and the fact that $\|\psi_n^{(N)}\|^2_{L^2(\T^2)}=N^2\|\psi_n\|^2_{L^2(\R^2)}$, the variances 
of $A^N$ and $C^N$ (which are centred) can be bounded by 
\begin{equ}
\Exp[A_{\psi_n^{(N)}}^N(t/N^2)]^2\lesssim t\|\psi_n\|_{L^2(\R^2)}^2\qquad \Exp[C_{\psi_n^{(N)}}^N(t/N^2)]^2\leq t\|\psi_n\|_{L^2(\R^2)}^2
\end{equ}
and, using the fact that $\|\psi_n\|^2_{L^2(\R^2)}\leq c_\phi n^2$, we conclude 
\begin{equ}[ac]
\frac{1}{n^4}\Big(\Exp[A_{\psi_n^{(N)}}^N(t/N^2)]^2\vee \Exp[C_{\psi_n^{(N)}}^N(t/N^2)]^2\Big)\leq c_\phi \frac{t}{n^2}\,.
\end{equ}
Concerning $B^N$, we exploit Lemma~\ref{lemmaIto}, which gives
\begin{equs}[e:b]
&\limsup_{N\to\infty}\Exp[B_{\psi_n^{(N)}}^N(t/N^2)]^2\lesssim \lambda^2 t \limsup_{N\to\infty}\sum_{|k/N|\leq 1} \frac{1}{N^2} \hat\psi_n(k/N)\log \Big(\frac{1}{|k/N|^2\vee N^{-2}}\Big)\\
&\lesssim \lambda^2 t \int |\hat\psi_n(p)|^2\log\left(\frac1{|p|} \right) \mathds{1}_{|p|\le 1} \dd p\leq \lambda^2 t n^2\|\phi\|^2_\infty \int |\hat{g}(p)|^2\log\left(\frac{n}{|p|} \right) \mathds{1}_{|p|\le n} \dd p\\
&\leq c_\phi \lambda^2 t n^2 \log n
\end{equs}
where we used that $\hat \psi_n(k)=\hat\phi(k)\hat g_n(k)=n^2 \hat\phi(k)\hat g(k n)$. 
Getting back to~\eqref{fwr}, it remains to control $\Exp [v^{(n)}(t)]^2=\E [v^{(n)}(0)]^2$, 
the equality being due to the stationarity of $H_N$. 
Note that 
\begin{equs} \label{b}
\E [v^{(n)}(0)]^2&= \int g_n(y)g_n(y')\\
   &\times\nonumber \E[(H_N(0)[\phi(\cdot-y)]-H_N(0)[\phi])(H_N(0)[\phi(\cdot-y')]-H_N(0)[\phi])]\dd y \dd y'\,.
\end{equs}
Thus, using the Cauchy Schwarz inequality, that $H_N$ is distributed according to a Gaussian Free Field and that $g_n$ integrates to $n^2$,
it is not hard to see that 
\begin{equ}[d]
\limsup_{N\to\infty} \E [v^{(n)}(0)]^2\leq c_\phi n^4 (\log n\vee 1)\,.
\end{equ}

We are now ready to put the bounds~\eqref{ac}~\eqref{e:b} and~\eqref{d} together and, 
suitably applying the Cauchy-Schwarz inequality to the various terms in~\eqref{fwr}, deduce
\begin{equ}
\limsup_{N\to\infty}  V_\phi^{1,N}(t)\le c_\phi\left[\frac t{n^2}+\lambda^2 t\frac{\log n\vee 1}{n^2}+\log n\vee 1\right]
\end{equ}
Therefore, choosing $n=\lceil \sqrt t\rceil$ concludes the proof. 
\end{proof}

\begin{appendix}

\section{The bulk diffusivity and the Green-Kubo formula: a heuristic}
\label{sec:heuristics}
In this section we want to provide a heuristic justification for the choice of the definition of the bulk diffusivity
given in~\eqref{e:BD}. We consider the Stochastic Burgers equation (obtained by~\eqref{e:kpz:reg} by formally setting 
$U_N\eqdef (-\Delta)^{1/2}H_N$) on the full space 
($N=\infty$) with cut-off $1$ and initial condition given by a regularised spatial white noise that is independent of $\xi$, i.e. 
\begin{equ}[e:akpz:smooth]
\partial_t U = \frac{1}{2} \Delta U
+
\lambda \cM^1[U] + (-\Delta)^{\frac{1}{2}}\Pi_a\xi\,,\qquad U(0)=\eta^a\eqdef \Pi_a\eta\,,
\end{equ}
where $a\in(1,\infty)$ and $\cM$ is defined as in~\eqref{e:nonlin}. 
Compared to~\eqref{e:AKPZ:u}, in~\eqref{e:akpz:smooth} also the space-time white noise $\xi$ is smoothened out. 
The main properties of the solution $U$ remain unaltered, and, with the 
same techniques adopted in~\cite{CES19}, it can be shown that the unique solution $U$ 
exists globally in time and is a space-time translation invariant strong Markov process with invariant measure 
$\eta^a$. The advantage of~\eqref{e:akpz:smooth} is that $U$ is smooth so that space-time point evaluation 
is allowed and well-posed. 

The bulk diffusivity serves as a way to measure the spread of the correlations of $U$ and it is classically defined 
(see for instance~\cite{BQS11} for the definition in the context of the $1$d KPZ equation) as 
\begin{equ}[e:CBD]
D^{(a)}(t)\eqdef \frac{1}{2t}\int_{\R^2} |x|^2 S(t,x)\dd x\,,
\end{equ}
where, for $t\geq 0$ and $x\in\R^2$, $S$ denotes the two-point correlation function
\begin{equ}[e:Corr]
S(t,x)\eqdef \Exp[U(t,x) U(0,0)]\,.
\end{equ}
See for instance \cite[Ch. II.2.2]{spohn2012large} for the analogous definition for
interacting particle systems 
(we put the prefactor $1/2$ simply to ensure that the diffusion
coefficient of the linear equation is $1$ and, with respect to the
interacting particle system references, we omit a prefactor related to
the so-called ``compressibility''). We now want to formally manipulate the expression on the right hand side 
of~\eqref{e:CBD} in order to connect it 
to~\eqref{e:BD}. Note that if $\lambda=0$, $S(t,\cdot)$ is explicit 
and it can be easily shown to integrate to $1$ and 
to decay at $\infty$ exponentially fast. For the purpose of this section, 
we will \emph{assume} that $S(t,x)$ decays fast (say, faster than $1/|x|^2$)
for $|x|\to\infty$ also for $t>0$. Using integration by parts and
that $\cM^1[U]= (-\Delta)^{1/2}\mathcal \cN^1[U]$, one then sees
that $S(t,\cdot)$ also integrates to $1$.
Now, upon integrating~\eqref{e:akpz:smooth} in time and plugging it into the definition of $S$ we see that 
\begin{equ}
S(t,x)= S(0,x)+\half\int_0^t \Delta S(s,x)\dd s+\lambda \int_0^t \Exp[\cM^1[U](s,x) u(0,0)]\dd s\,,
\end{equ}
where the term containing the noise drops out because the initial condition is independent of $\xi$.
Since $\int S(t,x)\dd x=1$ and $|S(t,\cdot)|$ decays sufficiently fast, a simple 
integration by parts gives
\begin{equ}
\frac14 \int_0^t\int |x|^2  \Delta S(s,x)\dd x\dd s={ t}\,.
\end{equ}
For the term containing the nonlinearity instead, recall $(-\Delta)^{\tfrac{1}{2}}\cN^1[U]=\cM^1[U]$. 
Then, integrating once more by parts, we get
\begin{equs}
\half \int |x|^2 \Exp[\cM^1[U](s,x) &U(0,0)]\dd x\\
&=-\half\int (-\Delta)^{\tfrac12} |x|^2 \Exp[\cN^1[U](s,x) (U(0,0)-U(s,0))]\dd x\\
&=-\half\int (-\Delta)^{\tfrac12} |x|^2 \E[\cN^1[\eta^a](0) \tilde \Exp_{\eta^a}[\tilde U(s,x)-\tilde U(0,x)]]\dd x
\end{equs}
where the first passage is a consequence of the fact that $U(s)$ is distributed according to $\eta^a$, the latter is Gaussian 
and $\cN^1$ is quadratic, while for the second we further exploited the space-time translation invariance of $U$ and 
denoted by $\tilde\Exp_{\eta^a}$ the expectation with respect to the process $\tilde U$ starting from $\eta^a$ and 
running backward in time, i.e. $\tilde U(r,\cdot)=U(s-r,\cdot)$. We point out that $\tilde U$ has the 
same properties as $U$ and solves~\eqref{e:akpz:smooth} but with $-\lambda$ replacing $\lambda$. 
Therefore, arguing as above, we write 
\begin{equs}
\E[\cN^1[\eta^a](0)\tilde \Exp_{\eta^a}[\tilde U(s,x)-&\tilde U(0,x)]]\\
&=\int_0^s  \E[\cN^1[\eta^a](0)\tilde\Exp_{\eta^a}[\Delta\tilde U(r,x)-\lambda\cM^1[\tilde U](r,x)]]\dd r
\end{equs}
so that, integrating against $\half(-\Delta)^{\tfrac12} |x|^2$, we see that the summand containing the Laplacian 
vanishes while the other becomes
\begin{equs}
2\lambda \int_0^s \int \E[\cN^1[\eta^a](0)&\tilde \Exp_{\eta^a}[\cN^1[\tilde U](r,x)]]\dd x\dd r\\
&=2\lambda \int_0^s \int \E[\cN^1[\eta^a](0) e^{r\tilde\cL}\cN^1[\eta^a](x)]\dd x\dd r
\end{equs}
with $\tilde \cL$ the generator of the time reversed process.
In conclusion, $ D^{(a)}_\bulk$ can be rewritten as 
\begin{equs}[e:ModBD]
D^{(a)}(t)=&\frac{1}{2t}\int|x|^2 S(0,x)\dd x +1 \\
&+2\frac{\lambda^2}{t}\int_0^t\int_0^s\int \E[\cN^1[\eta^a](0) e^{r\tilde\cL}\cN^1[\eta^a](x)]\dd x\dd r\dd s\,.
\end{equs}
If we let $a\to\infty$, $\eta^a$ converges to a spatial white noise so
that the first term vanishes and \eqref{e:ModBD} reduces to
\begin{equs}[e:ModBDfin]
D(t)=1
+2\frac{\lambda^2}{t}\int_0^t\int_0^s\int \Exp[\tilde \cN^1[H](0,0) \tilde
\cN^1[H](r,x)]\dd x\dd r\dd s\,
\end{equs}
where we used the relation between $\cN^1$ and $\tilde \cN^1$, see \eqref{e:BN}.
Now, in case $\lambda=0$, we recover the well-known result concerning the bulk diffusivity 
of the linear stochastic heat equation, which is constant in time. On the other hand, for $\lambda>0$, 
taking $N\to\infty$, the bulk diffusivity $D_N(t)$ defined in~\eqref{e:BD} formally 
converges to $D(t)$ given as in~\eqref{e:ModBDfin}.

\section{Mode-coupling and $\sqrt{\log t}$ superdiffusivity}
\label{app:aheuri}
The ansatz and the calculations in this section are inspired by~\cite{BKS85} and Appendix C.2 of~\cite{S14}. 
As in the previous appendix we will work with the solution $U$ (and its Fourier transform $\hat U$) 
of~\eqref{e:akpz:smooth} on the full space and 
non-regularised noise, i.e. $a=\infty$, and we start from a white noise initial condition $\eta=\eta^\infty$. 
Let $\hat S$  be the Fourier transform of the two-point correlation 
function $S$ in~\eqref{e:Corr}, which by translation invariance is given by 
$\hat S(t,k)=\frac{1}{(2\pi)^2}\Exp[\hat U(t,k) \hat U(0,-k)]$. 
Formally $\hat S$ solves 
\begin{equs}\label{eq:Sequation}
\partial_t \hat S(t,k) + \frac12 |k|^2 \hat S(t,k) &= \frac{\lambda}{(2\pi)^2} \Exp[\hat U(0,-k) \cM_k^1(U(t))]	\\
&= \frac{\lambda}{(2\pi)^2} \E[\hat \eta(-k) e^{\cL t}\cM_k^1(\eta)]\,.
	\end{equs}
The generator $\cL$ of the Markov process $U$ can be written as the sum of $\gensy$ and $\cA$, whose definition 
can be read off~\eqref{e:gens:fock}-\eqref{e:genam:fock} (the variables now take values in $\R^2$ instead of $\Z^2$ and the sum is replaced by an integral) and 
whose properties are analogous to those in Lemma~\ref{lem:generator}.  
The semigroup associated to $\cL$ satisfies
\begin{equ}
e^{\cL t}= e^{\cL_0 t} + \int_0^t e^{\cL_0 (t-s)}\cA \,e^{\cL}\, \dd s \,.
	\end{equ}
Moreover, 
\begin{equ}
\E[\hat \eta(-k)e^{\cL_0 t}\cM_k^1(\eta)]=0\,	
	\end{equ}
which follows since $e^{\cL_0 t}$ corresponds to taking expectation
with respect to the Ornstein-Uhlenbeck process and $\cM_k^1$ is quadratic.
Getting back to~\eqref{eq:Sequation}, since the adjoint of $\cA$ is $-\cA$,
the term on the right hand side equals
\begin{equ}
 -\frac{\lambda}{(2\pi)^2} \int_0^te^{-\tfrac12 |k|^2(t-s)}\E[(\cA \hat \eta)({-k})e^{\cL s}\cM_k^1(\eta)]\, \dd s\, .
\end{equ}
Using that $\cA\hat  \eta ({-k})=\lambda \cM_{-k}^1(\eta)$, and the Fourier representation \eqref{e:nonlinF}
of the non-linearity  (with sums replaced by integrals) we see that the above equals
\begin{equs}\label{eq:Sintegral}
-\frac{\lambda^2}{(2\pi)^4}|k|^2&\int_0^t e^{-\tfrac12|k|^2(t-s)}\int\dd\ell\int\dd\ell'
\cK_{\ell,k-\ell}^1\cK_{\ell',-k-\ell'}^1\\
&\times\Exp[\hat U (s,\ell) \hat U(s,k-\ell) \hat U(0,\ell') \hat U(0,-k-\ell')]\,\dd s.
\end{equs}
Now, the ``mode-coupling
approximation'' (see e.g. \cite{S14}) consists in doing a Gaussian
approximation of the average of the product of four $U$ variables, which allows to apply Wick's rule. 
By translation invariance $\E[\hat U(s,\ell) \hat U(0,m)]=0$ unless
$\ell=-m$. Note also that the Wick contraction $\E[\hat U(s,\ell) \hat U(s,k-\ell)]\E[\hat U(0,\ell') \hat U(0,-k-\ell')]$ can be ignored because it vanishes unless $k=0$,  in which case however~\eqref{eq:Sintegral} is multiplied by $|k|^2=0$. 
Therefore, summing up the above computations and considerations we see that
\begin{equs}\label{eq:approxeqS}
\Big(\partial_t +\frac12 &|k|^2\Big)\hat S(t,k)\\
&\approx -2|k|^2\frac{\lambda^2}{(2\pi)^4} \int_0^t e^{-\tfrac12 |k|^2(t-s)}\int\dd\ell (\cK_{\ell,k-\ell}^1)^2\hat S(s,\ell)\hat S(s,k-\ell)\, \dd s\,.
	\end{equs}
We now make the ansatz 
\begin{equ}[e:ansatz]
\hat S(t,k) = \hat S(0,0)e^{-\tfrac12 |k|^2t
  -c|k|^2t(\log t)^\delta}
  \end{equ}
for small $k$ and large $t$, corresponding to a diffusion coefficient of order $(\log t)^\delta$. 
Our goal is to determine $\delta$ such that the left and right hand sides above coincide. 
According to~\eqref{e:ansatz}, in the regime considered, the left hand side of~\eqref{eq:approxeqS} equals
\begin{equation}\label{eq:lhs}
\Big(\partial_t +\frac12 |k|^2\Big)\hat S(t,k) \approx -c|k|^2 (\log t)^\delta \hat S(0,0)\,.
\end{equation}
Regarding the right hand side instead, we approximate $e^{-\tfrac12 |k|^2 (t-s)}$ by one and 
$k-\ell$ by $-\ell$ so that for $k\to 0$ and 
$t\to \infty$, it gives
\begin{equ}\label{eq:rhs}
-|k|^2\lambda^2 \int_0^t \dd s\int \dd \ell (\cK_{\ell,-\ell}^N)^2 e^{-2c|\ell|^2s(\log s)^\delta}	\approx -|k|^2\lambda^2 (\log t)^{1-\delta},
	\end{equ}
where we used the explicit form of 
$\cK_{\ell,-\ell}^N$ as in \eqref{e:NonlinPolar}.
Equating~\eqref{eq:lhs} and~\eqref{eq:rhs} yields that $\delta=\frac12$ as desired.

\section{Some technical results}\label{a:Technical}

In this section we will state and prove some technical bounds that are needed in Section~\ref{sec:iteration}. 

\begin{lemma}\label{l:lm}
	For any $\ell,m\in\Z^2$ and $k_{1:n}\in(\Z^2)^n$ such that $\ell+m=k_1$ we have 
	\begin{equation}\label{e:lm}
	\frac{1}{4}(|\ell|^2+|k_{1:n}|^2)\leq |\ell|^2+|m|^2+|k_{2:n}|^2\leq 4(|\ell|^2+|k_{1:n}|^2)\,.
	\end{equation}	
\end{lemma}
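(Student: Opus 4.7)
The inequality to prove is purely algebraic/geometric: since $|k_{1:n}|^2 = |k_1|^2 + |k_{2:n}|^2$ and the term $|k_{2:n}|^2$ appears additively on both sides, the content of the claim reduces to comparing $|\ell|^2 + |m|^2$ with $|\ell|^2 + |k_1|^2$, subject to the linear constraint $\ell+m = k_1$. The natural tool is the elementary parallelogram-type bound $|a+b|^2 \le 2(|a|^2+|b|^2)$.

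For the upper bound, I would write $|m|^2 = |k_1-\ell|^2 \le 2|k_1|^2 + 2|\ell|^2$, so
\begin{equation*}
|\ell|^2 + |m|^2 + |k_{2:n}|^2 \;\le\; 3|\ell|^2 + 2|k_1|^2 + |k_{2:n}|^2 \;\le\; 4\bigl(|\ell|^2 + |k_1|^2 + |k_{2:n}|^2\bigr) \;=\; 4\bigl(|\ell|^2+|k_{1:n}|^2\bigr).
\end{equation*}
For the lower bound, symmetrically, from $k_1 = \ell+m$ we get $|k_1|^2 \le 2|\ell|^2 + 2|m|^2$, hence
\begin{equation*}
|\ell|^2+|k_{1:n}|^2 \;=\; |\ell|^2 + |k_1|^2 + |k_{2:n}|^2 \;\le\; 3|\ell|^2 + 2|m|^2 + |k_{2:n}|^2 \;\le\; 4\bigl(|\ell|^2+|m|^2+|k_{2:n}|^2\bigr),
\end{equation*}
which after dividing by $4$ gives the claimed lower bound.

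There is no real obstacle here: both inequalities follow from a single application of $|a+b|^2\le 2|a|^2+2|b|^2$ (in one direction to $m = k_1-\ell$, in the other to $k_1 = \ell+m$), and the auxiliary coordinates $k_{2:n}$ play no role and are carried along inertly. The constant $4$ is not optimal (one could tighten it to $3$ with the same argument), but $4$ is what is used throughout Section~\ref{sec:iteration}, so no sharpening is required.
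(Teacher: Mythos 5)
Your proof is correct, and it is the same argument the paper has in mind: the paper simply says the lemma "follows by an application of the triangular inequality" and omits the details, which are exactly your two applications of $|a+b|^2\le 2(|a|^2+|b|^2)$ to $m=k_1-\ell$ and $k_1=\ell+m$. Nothing further is needed.
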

\begin{proof}
	The proof is an application of the triangular inequality, we omit the details.	
\end{proof}



In the following lemma, which is used  in Lemmas~\ref{l:UBtoLB} and~\ref{l:LBtoUB} we analyse the functions $\Ll,\,\LB,\,$ and $\UB$ introduced in~\eqref{e:L} and~\eqref{e:LUBk} 
respectively.

\begin{lemma}\label{l:MainIntegrals}
For $k\in\N$, let $\Ll$, $\LB_{k}$ and $\UB_{k}$ be the functions on $\R_+\times[1,\infty)$ 
defined in~\eqref{e:L} and~\eqref{e:LUBk}. 
Then, $\Ll$, $\LB_{k}$ and $\UB_{k}$ are monotonically decreasing in the first variable and increasing in the second.
For any $x>0$ and $z\geq 1$, we have that $\LB_{k}(x,z),\,\UB_{k}(x,z)\geq 1$ and the following inequalities hold
\begin{gather}
1\leq \LB_{k}(x,z)\leq \sqrt{\Ll(x,z)}\,,\label{e:LBExp}\\
1\vee \lambda \sqrt{z}\leq\sqrt{\Ll(x,z)}\leq\UB_{k}(x,z)\leq \Ll(x,z)\,.\label{e:UBExp}
\end{gather}
Moreover, for any $0<a<b$, we have 
\begin{equation}\label{e:IntUBtoLB}
	\lambda^2\int_a^b \frac{\dd x}{(x^2+x)\UB_{k}(x,z)}= 2\left[\LB_{k+1}(a,z)-\LB_{k+1}(b,z)\right]
\end{equation}
and 
\begin{equation}\label{e:IntLBtoUB}
	\lambda^2\int_a^b \frac{\dd x}{(x^2+x)\LB_{k}(x,z)}\leq 2 \left[\UB_{k}(a,z)-\UB_{k}(b,z)\right].
      \end{equation}
      Finally, one has
       \begin{equ}
   \label{e:addass}
  |\partial_x(x F(x,z))|= |F(x,z)+x\partial_x F(x,z)|\le (1+\lambda^2) F(x,z) \;\text{for every }\;
   x\ge0,
 \end{equ}
      when $F$ is either $\LB_k$ or $\UB_k$.
\end{lemma}
\begin{proof}
The two chains of inequalities in~\eqref{e:LBExp} and~\eqref{e:UBExp} 
are a direct consequence of the respective definitions and Taylor's approximation. A computation of the partial derivative with respect to the second variable yields the desired monotonicity.
Furthermore we have that 
\begin{equ}[e:Der1]
\partial_x\Ll(x,z)=-\frac{\lambda^2}{x^2+x},\,\qquad\partial_x\LB_{k}(x,z)=-\frac{\lambda^2}2\frac{\LB_{k-1}(x,z)}{(x^2+x)\Ll(x,z)}
\end{equ}
and 
	\begin{equs}[e:Der2]
		\partial_x\UB_{k}(x,z)&=-\lambda^2\frac{\LB_{k}(x,z)-\frac12\LB_{k-1}(x,z)}{(x^2+x)(\LB_{k}(x,z))^2}\\
		&=-\frac{\lambda^2}{2(x^2+x)\LB_{k}(x,z)}\Big[1+\frac{\frac{(\frac12\log \Ll(x,z))^k}{k!}}{\LB_{k}(x,z)}\Big]\,,
	\end{equs}
	which are all strictly negative for any $x>0$ and $z\geq 1$. 
The above computation of the partial derivatives moreover reveals that
	\begin{equs}
		\lambda^2\int_a^b \frac{\dd x}{(x^2+x)\UB_{k}(x,z)} = 2\int_b^a \partial_x\LB_{k+1}(x,z)\dd x
		&=2\left[\LB_{k+1}(a,z)-\LB_{k+1}(b,z)\right]\,,
	\end{equs}
	which is~\eqref{e:IntUBtoLB}. 
	For~\eqref{e:IntLBtoUB}, notice that 
	\begin{equs}
		&\lambda^2\int_{a}^{b}\frac{\dd x}{(x^2+x)\LB_{k}(x,z)}
		=\int_{b}^{a}\partial_x\UB_{k}(x,z)\dd x +\frac{\lambda^2}2\int_{a}^{b}\frac{ \LB_{k-1}(x,z)}{(x^2+x)\LB_{k}(x,z)^2}\dd x \\
		&\leq \int_{b}^{a}\partial_x\UB_{k}(x,z)\dd x  +\frac{\lambda^2}2\int_{a}^{b} \frac{ 1}{(x^2+x)\LB_{k}(x,z)}\dd x,
	\end{equs}
where the last inequality follows from the fact that all the terms are positive and 
for all $x$ we have $\LB_{k-1}(x,z)\leq \LB_{k}(x,z)$. 
Bringing the last term to the left hand side gives the required estimate.

Finally, \eqref{e:addass} follows immediately from \eqref{e:Der1}-\eqref{e:Der2}, recalling hat $\Ll(x,z)\ge1$.
\end{proof}

\begin{remark}
For notational convenience, the next three lemmas are formulated for a generic function $F$ satisfying Assumption \ref{ass:FG} below. In practice, we will always apply the results when $F(\cdot,z)$ is of the form  $a+b \UB_k(\cdot,z)$ or $a+b \LB_k(\cdot,z)$, for some positive constants $a,b$, possibly depending on $k$ and on $z$. In this case, the validity of  the assumption follows from the definition of $\UB_k$, $\LB_k$ and from Lemma \ref{l:MainIntegrals} above.
  
\end{remark}
\begin{assump}
  \label{ass:FG}
  	 $F=F(x,z)$ is a function on $\R_+\times [1,\infty)$
	monotonically decreasing in the first variable and such that for all $(x,z)\in\R_+\times [1,\infty)$, $F(x,z)\geq 1$. 
	We assume further that the function $G=G(x,z)$ given by 
	\begin{equ}[e:FG]
		G(x,z)=\frac{\Ll(x,z)}{F(x,z)}\,,
	\end{equ}
	where $\Ll$ is defined as in~\eqref{e:L}, is also monotonically decreasing in the first variable and satisfies 
	$G(x,z)\geq 1$ for all $(x,z)\in\R_+\times [1,\infty)$.
        Finally, we assume that \eqref{e:addass} holds.
\end{assump}

\begin{lemma}\label{l:OffDiag} Under Assumption \ref{ass:FG},
	 there exists $K>0$ (independent of $F$) such that 
	\begin{equ}[e:IntOffDiag]
		\int_0^\infty\frac{\dd\rho}{(\rho^2+\alpha)F(\rho^2+\alpha,z)}\leq \frac{K}{\sqrt{\alpha}} \frac{1}{F(2\alpha,z)}\,,
	\end{equ}
	for all $\alpha>0$, $\lambda>0$ and $z\ge1$. 
      \end{lemma}
      
      \begin{proof}
        We write the integral on the left hand side of~\eqref{e:IntOffDiag} as the sum of $I_1(\alpha,z)$ and $I_2(\alpha,z)$, where
	\begin{equ}
	I_1(\alpha,z)= \int_0^{\sqrt{\alpha}}\frac{\dd \rho}{(\rho^2+\alpha)F(\rho^2+\alpha,z)}\,,\qquad
	I_2(\alpha,z)= \int_{\sqrt{\alpha}}^\infty\frac{\dd \rho}{(\rho^2+\alpha)F(\rho^2+\alpha,z)}\,.
	\end{equ}
	For $I_1$, we use monotonicity of $F$ w.r.t. its first argument to write
        \begin{equ}
	  I_1(\alpha,z)\le \frac1{F(2\alpha,z)}\frac{\sqrt{ \alpha}}\alpha=\frac1{\sqrt{\alpha}F(2\alpha,z)}.
        \end{equ}
	Using~\eqref{e:FG}, and the fact that  $\Ll$ is decreasing w.r.t. its first argument, $I_2$ can be written as 
	\begin{equs}
		I_2(\alpha,z)&=\int_{\sqrt{\alpha}}^\infty\frac{G(\rho^2+\alpha,z)}{(\rho^2+\alpha)\Ll(\rho^2+\alpha,z)}\dd\rho\leq
                G(2\alpha,z)\int_{\sqrt{\alpha}}^\infty\frac{\dd\rho}{\rho^2\Ll(2\rho^2,z)}
	\end{equs}
        and it remains to prove that
        \begin{equs}
          \label{e:itrema}
\int_{\sqrt{\alpha}}^\infty\frac{\dd\rho}{\rho^2\Ll(2\rho^2,z)}\le \frac K{\sqrt{\alpha}\Ll(2\alpha,z)}.
        \end{equs}
        If $\alpha\ge1$, recalling $z\ge 1$, we simply bound
        \[
\Ll(2\rho^2,z)=1+\lambda^2(z+\log(1+1/(2\rho^2))\gtrsim \Ll(2\alpha,z)
        \]
        and the desired estimate immediately follows. If instead $\alpha\le 1$,
        we split the integral as
        \begin{equs}
\int_{\sqrt{\alpha}}^\infty\frac{\dd\rho}{\rho^2\Ll(2\rho^2,z)}=I_3(\alpha,z)+I_4(\alpha,z):=\int_{\sqrt{\alpha}}^{\alpha^{1/4}}\frac{\dd\rho}{\rho^2\Ll(2\rho^2,z)}+\int_{{\alpha^{1/4}}}^\infty\frac{\dd\rho}{\rho^2\Ll(2\rho^2,z)}.
        \end{equs}
        For $I_3$ we simply use $\Ll(2\rho^2,z)\ge\Ll(2\sqrt\alpha,z)\gtrsim \Ll(2\alpha,z)$ and then it is upper bounded as the r.h.s. of \eqref{e:itrema}. For $I_4$, instead, we use $\Ll(2\rho^2,z)\ge \Ll(\infty,z)$ so that
        \begin{equs}
I_4(\alpha,z)\lesssim \frac1{\alpha^{1/4}(1+\lambda^2 z)}\lesssim \frac1{\sqrt{\alpha}(1+\lambda^2 (z+\log(1+1/(2\alpha)))}
        \end{equs}
        as desired.
        Putting everything together, \eqref{e:IntOffDiag} follows. 
\end{proof}

%
%


\begin{lemma}\label{l:Approx}
Under Assumption \ref{ass:FG}, define $\Gamma(\ell,m,k_{2:n})$ as in \eqref{e:Conv1},
\begin{equ}[e:Conv]
\alpha=\alpha(\mu,k_{1:n})\eqdef\mu+\tfrac12|k_{1:n}|^2\,,\qquad\text{and}\qquad \alpha_N\eqdef \alpha/N^2
\end{equ} 
and $F^N(\cdot,z):=F(\cdot/N^2,z)$, $G^N(\cdot,z):=G(\cdot/N^2,z)$. Then, 
 there exists a positive constant $K$ (depending only on $\lambda$) such that
 \begin{equs}\label{e:lemmaC4nuovo}
  \Big|&\sum_{\ell+m=k_1}\frac{(\nonlin_{\ell,m})^2}{\mu+\Gamma(\ell,m,k_{2:n})F^N(\mu+\Gamma(\ell,m,k_{2:n}),z)}\\
 &-
     \sum_\ell\frac{(\nonlin_{\ell,-\ell})^2}{(|\ell|^2+\alpha)(1+|\ell/N|^2+\alpha_N^2)F^N(|\ell|^2+\alpha,z)}
    \Big|\le K\frac{G^N((\mu+\frac12|k_{1:n}|^2)\vee 1,z)}{\lambda \sqrt z}.
\end{equs}

  \end{lemma}

  \begin{remark}
Actually, the right hand side of \eqref{e:lemmaC4nuovo} could be replaced by a constant depending only on $z$. 
However, it is convenient to have the bound in this form since in the iteration, the terms giving the main contribution 
will be upper or lower bounded by a quantity like $G^N(\mu+\frac12|k_{1:n}|^2,z)$ 
in which case~\eqref{e:lemmaC4nuovo} (with $z$ being taken suitably large) will be regarded as an error term.
  \end{remark}
  
  \begin{proof}
    We proceed in three steps, starting from the first sum in \eqref{e:lemmaC4nuovo}:
    \begin{enumerate}[noitemsep]
    \item  first, we replace the denominator by $ [\mu+\Gamma(\ell,m,k_{2:n})]F^N(\mu+\Gamma(\ell,m,k_{2:n}),z)$,
      
    \item then, we replace the denominator by $(|\ell|^2+\alpha)(1+|\ell/N|^2+\alpha_N^2)F^N(|\ell|^2+\alpha,z)$;

    \item finally, we replace       $\nonlin_{\ell,m}$ with $\nonlin_{\ell,-\ell}$

    \end{enumerate}
and it will turn out that each step produces an error term of the same form as the one 
at the right hand side of \eqref{e:lemmaC4nuovo}.
\medskip

\noindent{\bf Step 1.} Since $|\nonlin_{\ell,m}|\le 1$, it is enough to bound
\begin{equs}[eq:t1]
\sum_{\ell+m=k_1}&\Big|\frac{1}{\mu+\Gamma(\ell,m,k_{2:n})F^N(\mu+\Gamma(\ell,m,k_{2:n}),z)}\\
&-\frac1{ [\mu+\Gamma(\ell,m,k_{2:n})]F^N(\mu+\Gamma(\ell,m,k_{2:n}),z)}
      \Big|.
    \end{equs} 
Using that $|F^N-1|/F^N\le 1$, the sum is upper bounded by
    \begin{equs}
      \mu\sum_{\ell+m=k_1} \frac{1}{[\mu+\Gamma(\ell,m,k_{2:n})F^N(\mu+\Gamma(\ell,m,k_{2:n}),z)](\mu+\Gamma(\ell,m,k_{2:n}))}.
    \end{equs}
We split $\Z^2$ into  $\Omega_1=\{\ell: \mu+\Gamma(\ell,k_1-\ell,k_{2:n})\leq\alpha\vee 1\}$, 
for $\alpha$ defined as in~\eqref{e:Conv}, 
and $\Omega_2=\Z^2\setminus \Omega_1$. 
In the first case $F^N(\mu+\Gamma(\ell,k_1-\ell,k_{2:n}),z)\geq F^N(\alpha\vee 1,z)$ 
and thus we obtain the upper bound
  \begin{equation}\label{e:Step1Int}
  \begin{aligned}
    &\frac{\mu}{F^N(\alpha\vee 1,z)}\sum_{\substack{\ell+m=k_1\\ \ell\in \Omega_1}}\frac1{[\frac{\mu}{(F(\alpha\vee 1,z)}+\Gamma(\ell,m,k_{2:n})](\mu+\Gamma(\ell,m,k_{2:n}))}\\
    &\le \frac{\mu}{F^N(\alpha\vee 1,z)}
      \Big(\sum_\ell\frac1{(\frac{\mu}{F(\alpha\vee 1,z)}+\frac12|\ell|^2)^2}\Big)^\frac12
      \Big(\sum_\ell\frac1{(\mu+\frac12|\ell|^2)^2}\Big)^\frac12\\
    &\lesssim\frac{\mu}{F^N(\alpha\vee 1,z)}\frac{\sqrt{F^N(\alpha\vee 1,z)}}{\mu}=\frac{1}{\sqrt{F^N(\alpha\vee 1,z)}}
      \le  \frac{G^N((\mu+\frac12|k_{1:n}|^2)\vee 1,z)}{\lambda\sqrt{z}} 
      \end{aligned}
      \end{equation}
    where we used the relation between $F$ and $G$, the assumption $G\ge 1$ and the bound $\Ll(x,z)\ge \lambda^2 z$.
    For $\ell\in\Omega_2$, note that 
\[
F^N(\mu+\Gamma(\ell,m,k_{2:n}),z)= \frac{\Ll^N(\mu+\Gamma(\ell,m,k_{2:n}),z)}{G^N(\mu+\Gamma(\ell,m,k_{2:n}),z)}\geq\frac{z\lambda^2}{G^N((\mu+\frac12|k_{1:n}|^2)\vee 1,z)}.
\]
We can proceed as in~\eqref{e:Step1Int} but with the right hand side above in place of $F^N$. 
Hence, also the sum over $\Omega_2$ is upper bounded by 
  \begin{equ}
    \label{e:altot1}
 \frac{G^N((\mu+\frac12|k_{1:n}|^2)\vee 1,z)}{\lambda\sqrt{z}}\,.
  \end{equ}
and consequently so is~\eqref{eq:t1}. 
\medskip

\noindent{\bf Step 2.} At first, we bound $|\nonlin_{\ell,m}|$ by the indicator function of $|\ell|\leq N$. 
The quantity we have to control takes the form (we write for lightness of notation $\Gamma$ instead of $\Gamma(\ell,m,k_{2:n})$ and $A$ instead of $A(\ell,k_{1:n})$)
      \begin{equs}
          \label{eq:t3}
          &\sum_{\substack{\ell+m=k_1\\ |\ell|\le N}}\Big|
\frac1{(\mu+\Gamma)F^N(\mu+\Gamma,z)}-\frac1{(|\ell|^2+\alpha) F^N(|\ell|^2+\alpha,z)}\\
&+\frac1{(|\ell|^2+\alpha) F^N(|\ell|^2+\alpha,z)}-\frac1{(|\ell|^2+\alpha)(1+|\ell/N|^2+\alpha_N)F^N(|\ell|^2+\alpha,z)}
            \Big|
          \le \one +\two\,,
    \end{equs}
where, setting $H(x)\eqdef x F^N(x,z)$,
\begin{equs}
\one &= \sum_{\substack{\ell+m=k_1\\ |\ell|\le N}}\left|\frac{H(|\ell|^2+\alpha)-H(\mu+\Gamma)}{(\mu+\Gamma)(|\ell|^2+\alpha)F^N(\mu+\Gamma,z)F^N(|\ell|^2+\alpha,z)}
\right|\,,\\
\two &= \frac1{N^2}\sum_{\substack{\ell+m=k_1\\ |\ell|\le N}}\frac1{F^N(|\ell|^2+\alpha,z)}\,,	
\end{equs}
Recalling assumption \eqref{e:addass} and that $F(\cdot,z)$ is decreasing, for any $a,b\in\R$ we have
  \[
  |H(a)-H(b)|\le (1+\lambda^2) F^N(a\wedge b,z)|a-b|
  \]
  which we apply in the sum, with $a=|\ell|^2+\alpha,\, b=\mu+\Gamma$, so that
  $|a-b|\le |\ell| |k_1|$. Also, with the same choice of 
  $a,b$, by~\eqref{e:lm}, $a\vee b \le 4a$, so that by the monotonicity of $F^N$, one has
  \[
  \frac{F^N(a\wedge b,z)}{F^N(a,z)F^N(b,z)}\le \frac1{F^N(a\vee b,z)}\le \frac1{F^N(4a,z)}\,.
  \]
  In conclusion, invoking~\eqref{e:lm} once more, $\one$ can be upper bounded as
  \begin{equ}
\one\lesssim |k_1|\sum_{\ell}\frac{|\ell|}{(|\ell|^2+\alpha)^2 F^N(4(|\ell|^2+\alpha),z)}\lesssim|k_1/N|\int_0^\infty\frac{\dd \rho}{(\rho^2+4\alpha_N)F(\rho^2+4\alpha_N,z)}\,.
  \end{equ}
  Using Lemma \ref{l:OffDiag}, the latter expression is bounded by
  \begin{equ}
    \frac{|k_1/N|}{\sqrt{\alpha_N}F(8\alpha_N,z)}\lesssim \frac1{F^N(8(\alpha\vee1),z)}\le \frac{G^N((
      \mu+\frac12|k_{1:n}|^2)\vee1,z)}{\lambda^2 z}\,.
  \end{equ}
  which gives the correct bound on $\one$. 
  As for $\two$, the monotonicity of $F$ and $G$,~\eqref{e:FG} and~\eqref{e:UBExp} give
  \begin{equs}
\frac1{N^2}\sum_{\substack{\ell+m=k_1\\ |\ell|\le N}}\frac1{F^N(|\ell|^2+\alpha,z)}\leq \frac1{N^2}\sum_{\substack{\ell+m=k_1\\ |\ell|\le N}}\frac1{F^N((|\ell|^2+\alpha)\vee 1,z)}\leq\frac{G^N(\alpha\vee 1,z)}{\lambda^2 z}\,.
  \end{equs}
Therefore, the bound on \eqref{eq:t3} is concluded. 
%
%
%
\medskip

\noindent{\bf Step 3.} We need to upper bound
 \begin{equ}
    \label{eq:t2}
    \sum_{\ell+m=k_1}   
    \frac{|(\nonlin_{\ell,m})^2-(\nonlin_{\ell,-\ell})^2|}{ (|\ell|^2+\alpha)(1+|\ell/N|^2+\alpha_N)F^N(|\ell|^2+\alpha,z)}\leq\sum_{\ell+m=k_1}   
    \frac{|(\nonlin_{\ell,m})^2-(\nonlin_{\ell,-\ell})^2|}{ (|\ell|^2+\alpha)F^N(|\ell|^2+\alpha,z)}\,.
    \end{equ}
    We split $\mathbb Z^2$ into 
    $\Omega_1=\{\ell:|k_1-\ell|<\frac12|k_1|\}$ and its complement $\Omega_2$. 
    In $\Omega_1$, it is immediate to see that one has $\frac12|k_1|\le |\ell|\le \frac32|k_1|$; bounding the
    term inside the absolute value in \eqref{eq:t2} by a constant,
    we are left with
  \begin{equs}
\sum_{\frac12|k_1|\le |\ell|\le \frac32|k_1|}\frac{1}{ (|\ell|^2+\alpha)F^N(|\ell|^2+\alpha,z)}\,.
  \end{equs}
  In the relevant region of summation one has $|\ell|^2+\alpha\le 6(\alpha\vee1)$, 
  so that, since $F^N$ is decreasing in the first argument, we can upper bound the sum as
  \begin{equs}
\frac1{F^N(6(\alpha\vee 1),z)}\sum_{\frac12|k_1|\le |\ell|\le \frac32|k_1|}\frac{1}{|\ell|^2}\lesssim \frac{G^N((\mu+\frac12|k_{1:n}|^2)\vee 1,z)}{\lambda^2 z}\,.
  \end{equs}
  In $\Omega_2$, instead, we use  the definition \eqref{e:nonlinCoefficient} of $\nonlin$ and we note that
  \[
  c(\ell,k_1-\ell)^2-c(\ell,\ell)^2=c(\ell,k_1)c(\ell,k_1-2\ell)\,.
  \]
Hence, 
  \begin{equs}
\left|
      (\nonlin_{\ell, k_1-\ell})^2-(\nonlin_{\ell,-\ell})^2
      \right|\le \frac{c(\ell,\ell)^2}{|\ell|^4}\frac{|k_1\cdot(k_1-2\ell)|}{|k_1-\ell|^2}+\frac{|c(\ell,k_1)c(\ell,k_1-\ell)|}{|\ell|^2|k_1-\ell|^2}\\
      \lesssim \frac{|k_1|}{|k_1-\ell|}\frac{|k_1-2\ell|}{|k_1-\ell|}+\frac{|k_1|}{|k_1-\ell|}\lesssim\frac{|k_1|}{|k_1-\ell|}\left(1+\frac{|k_1|}{|k_1-\ell|}\right)
      \lesssim\frac{|k_1|}{|k_1-\ell|}
  \end{equs}
  where the last inequality follows from the definition of $\Omega_2$ (note that the denominator cannot vanish therein). 
  Also,  one has $|\ell|^2+\alpha\ge \alpha$ so that
  \[
\frac1{F^N(|\ell|^2+\alpha,z)}\le \frac1{F^N((|\ell|^2+\alpha)\vee 1,z)}\le \frac{G^N((\mu+\frac12|k_{1:n}|^2)\vee1,z)}{z\lambda^2}\,.
    \]
  Hence, the sum at the right hand side of \eqref{eq:t2} restricted to $\Omega_2$ is bounded from above by
  \begin{equs}
    \frac{G^N((\mu+\frac12|k_{1:n}|^2)\vee 1,z)|k_1|}{\lambda^2 z}\sum_{ \ell\ne k_1}\frac{1}{|k_1-\ell|}\frac{1}{|\ell|^2+|k_1-\ell|^2}\lesssim\frac{G^N((\mu+\frac12|k_{1:n}|^2)\vee 1,z)}{\lambda^2 z}
  \end{equs}
  where the restriction ${k_1\ne \ell}$ comes from the definition of $\Omega_2$ and the last bound is easily obtained by splitting into the region where $|\ell|$ is larger or smaller than $\frac12|k_1|$.
\medskip

Putting together the bounds obtained in Steps 1,2 and 3, 
we have bounded the left hand side of \eqref{e:lemmaC4nuovo} (modulo absolute constants) by 
\begin{equ}
\frac{G^N((\mu+\frac12|k_{1:n}|^2)\vee 1,z)}{\lambda \sqrt{z}}+2\frac{G^N((\mu+\frac12|k_{1:n}|^2)\vee 1,z)}{\lambda^2 z}\,.
\end{equ}
Hence, since $z\ge1$, there clearly exists a constant $K$ (depending only on $\lambda$) for which the statement 
holds and the proof is completed. 
  \end{proof}
  
We conclude this appendix by showing that the bounds on the Riemann-sums  performed in the proofs of 
Lemmas~\ref{l:UBtoLB},~\ref{l:LBtoUB} and~\ref{l:OffDiagBound} are uniform in the scale parameter $N\in\N$.

\begin{lemma}
  \label{lemma:RiemannNew}
  Let $F$ satisfy Assumption \ref{ass:FG} and define $\alpha$ and $\alpha_N$ according to~\eqref{e:Conv}. 
 Then, there exists a constant $K$ (depending only on $\lambda$) such that
 \begin{equs}
   \label{e:Riemann}
    \Big|&
      \sum_\ell\frac{(\nonlin_{\ell,-\ell})^2}{(|\ell|^2+\alpha)(1+|\ell/N|^2+\alpha_N)F^N(|\ell|^2+\alpha,z)}\\
      &
-\int_{\R^2}\frac{(\nonlin_{xN,-xN})^2\dd x}{(|x|^2+\alpha_N)(1+|x|^2+\alpha_N)F( |x|^2+\alpha_N,z)}
    \Big|\le K\frac{G^N((\mu+\frac12|k_{1:n}|^2)\vee 1,z)}{\lambda^2 z}.
  \end{equs}
\end{lemma}

\begin{proof}
  A first observation is that, letting $\ell=x N$, the summand is
  exactly $1/N^2$ times the integrand. The claim is then a Riemann sum approximation statement but
  some care has to be taken, on the one hand because the integrand is
  singular at the origin, and on the other because we want the constant
  $K$ not to depend on $F$.

  Since, by definition~\eqref{e:nonlinCoefficient} $\nonlin$ contains an indicator function~\eqref{eq:JN} 
  which forces  $\ell\neq0$ in the sum and
  $1/N\le |x|\le 1$ in the integral, we can assume these two conditions to be in place, and 
  therefore the difference in \eqref{e:Riemann} equals
  \begin{equs}[e:IntApprox]
 \sum_{1\le |\ell|\le N} &\int_{Q^N_\ell} |I_1(\ell/N)I_2(\ell/N)- I_1(x)I_2(x)|\dd x\leq \frac{1}{N} \sum_{1\le |\ell|\le N}\int_{Q^N_\ell}\dd x \sup_{x\in Q_\ell^N}|\nabla (I_1(x)I_2(x))|\\
 &=\frac{1}{N^3}\sum_{1\le |\ell|\le N}\sup_{x\in Q_\ell^N}|\nabla (I_1(x)I_2(x))|\\
 &\leq \frac{1}{N^3}\sum_{1\le |\ell|\le N}\sup_{x\in Q_\ell^N}|\nabla I_1(x)||I_2(x)|+\frac{1}{N^3}\sum_{1\le |\ell|\le N}\sup_{x\in Q_\ell^N}|I_1(x)||\nabla I_2(x)|
  \end{equs}
where $Q_\ell^N$ is the square of side-length $1/N$ centred at $\ell$, 
while $I_1$ and $I_2$ are the functions defined as 
\begin{equ}
I_1(x)\eqdef\frac{1}{4\pi^2} \frac{c(x,-x)^2}{|x|^2(|x|^2+\alpha_N)(1+|x|^2+\alpha_N)}\,,\qquad I_2(x)\eqdef \frac1{F( |x|^2+\alpha_N,z)}\,.
\end{equ}
We will separately bound the suprema appearing in the sums above. 
For the first, it is not hard to see that since
\begin{equ}
  \label{exjac}
I_2(x)= \frac1{F( |x|^2+\alpha_N,z)}\le  \frac1{F( (|x|^2+\alpha_N)\vee 1/N^2,z)}\le \frac{G^N(\alpha\vee 1,z)}{\lambda^2 z}\,, 
\end{equ}
which is independent of $\ell$, we have 
\begin{equ}[e:sup1]
\sup_{x\in Q_\ell^N}|\nabla I_1(x)||I_2(x)| \lesssim \frac{G^N(\alpha\vee 1,z)}{\lambda^2 z}\sup_{x\in Q_\ell^N}\frac1{|x|(|x|^2+\alpha_N)}\lesssim \frac{G^N(\alpha\vee 1,z)}{\lambda^2 z}\frac{N^3}{|\ell|(|\ell|^2+\alpha)}\,.
\end{equ}
For the other instead, note that
\begin{equs}
  |\nabla I_2(x)|
   &\lesssim\frac{|x| |F'(|x|^2+\alpha_N,z)|}{F(|x|^2+\alpha_N,z)^2}\le \frac{(|x|^2+\alpha_N)|F'(|x|^2+\alpha_N,z)|}{(|x|^2+\alpha_N)^\half F(|x|^2+\alpha_N,z)^2} \\
   &\lesssim \frac1{(|x|^2+\alpha_N)^\half F(|x|^2+\alpha_N,z)}\leq \frac1{(|x|^2+\alpha_N)^\half}\frac{G^N(\alpha\vee 1,z)}{\lambda^2 z}
 \end{equs}
where we exploited assumption \eqref{e:addass} when passing from the first to the second line, 
and~\eqref{exjac} in the last step. Hence, 
\begin{equ}[e:sup2]
\sup_{x\in Q_\ell^N}|I_1(x)||\nabla I_2(x)|\lesssim \frac{G^N(\alpha\vee 1,z)}{\lambda^2 z}\frac{N^3}{|\ell|(|\ell|^2+\alpha)}\,.
\end{equ}
We now plug the bounds~\eqref{e:sup1} and~\eqref{e:sup2} into~\eqref{e:IntApprox}, 
which, consequently, is upper bounded by
\begin{equ}
\frac{G^N(\alpha\vee 1,z)}{\lambda^2 z}\sum_{1\le |\ell|\le N}
  \frac1{|\ell|(|\ell|^2+\alpha)}\lesssim \frac{G^N(\alpha\vee 1,z)}{\lambda^2 z}\,,
\end{equ}
so that the statement follows at once. 
\end{proof}

\end{appendix}

\section*{Acknowledgements}
G. C. gratefully acknowledges financial support via the EPSRC grant EP/S012524/1. 
D. E. gratefully acknowledges financial support 
from the National Council for Scientific and Technological Development - CNPq via a 
Universal grant 409259/2018-7, and a Bolsa de Produtividade 303520/2019-1. D.E moreover acknowledges financial support from the Coordenac\~ao de Aperfeicoamento de Pessoal de N\'ivel Superior - Brasil (CAPES) via a Capes print UFBA 02.2019 scholarship. Moreover, D.E acknowledges support by the Serrapilheira Institute which supported this work (grant number Serra - R-2011-37582). F. T. gratefully acknowledges financial support of Agence Nationale de la Recherche via the
ANR-15-CE40-0020-03 Grant LSD.
We are grateful to  the Hausdorff Institute in Bonn, where this work was initiated, for the kind hospitality.

\bibliography{bibtex}
\bibliographystyle{Martin}

\end{document}